\theoremstyle{plain}
\newtheorem{theorem}{Theorem}[section]
\newtheorem{lemma}[theorem]{Lemma}
\newtheorem{corollary}[theorem]{Corollary}
\newtheorem{proposition}[theorem]{Proposition}
\newtheorem{assumption}{Assumption}
\theoremstyle{definition}
\newtheorem{definition}[theorem]{Definition}
\newtheorem{remark}[theorem]{Remark}
\numberwithin{equation}{section}
\renewcommand{\d}{\textnormal{d}}
\newcommand{\N}{\mathbb{N}}
\newcommand{\R}{\mathbb{R}}
\newcommand{\ma}{\mu_{\mathrm{axes}}}
\DeclareMathOperator{\supp}{supp}
\newcommand{\cE}{\mathcal{E}}
\newcommand{\cK}{\mathcal{K}}
\DeclareMathOperator*{\BMO}{BMO}
\renewcommand{\div}{\textnormal{div}}
\newcommand{\BIGOP}[1]
{
\mathop{\mathchoice%
{\raise-0.22em\hbox{\huge $#1$}}%
{\raise-0.05em\hbox{\Large $#1$}}{\hbox{\large $#1$}}{#1}}}
\newcommand{\bigtimes}{\BIGOP{\times}}
\title[Fractional orthotropic $p$-Laplacians]{Regularity estimates for fractional orthotropic $p$-Laplacians of mixed order}
\author{Jamil Chaker}
\address{Fakult\"at f\"ur Mathematik, Universit\"at Bielefeld, 33615 Bielefeld, Germany}
\email{jchaker@math.uni-bielefeld.de}
\author{Minhyun Kim}
\address{Fakult\"at f\"ur Mathematik, Universit\"at Bielefeld, 33615 Bielefeld, Germany}
\email{minhyun.kim@uni-bielefeld.de}
\subjclass[2020]{35B65, 47G20, 31B05, 42B25}
\keywords{nonlocal operators, divergence form, regularity theory, anisotropic measures, weak Harnack inequality}
\thanks{Jamil Chaker gratefully acknowledges support by the Deutsche Forschungsgemeinschaft (SFB 1283/2 2021 – 317210226). Minhyun Kim gratefully acknowledges funding by the Deutsche Forschungsgemeinschaft (GRK 2235/2 2021 - 282638148).}
\begin{document}

\begin{abstract}
We study robust regularity estimates for a class of nonlinear integro-differential operators with anisotropic and singular kernels. In this paper, we prove a Sobolev-type inequality, a weak Harnack inequality, and a local H\"older estimate.
\end{abstract}

\maketitle

%%%%%%%%%%%%%%%%%%%%%%%%%%%%%%%%%%%%%%

\section{Introduction} \label{sec:introduction}
In this paper we investigate regularity estimates for weak solutions to nonlocal equations 
\begin{equation}\label{eq:PDE}
 Lu=f \quad \text{in } Q=(-1,1)^n,
\end{equation}
where $L$ is a nonlinear integro-differential operator of the form
\begin{equation}\label{def:nonlocaloperator}
Lu(x) = \mathrm{PV} \int_{\mathbb{R}^n} |u(y) - u(x)|^{p-2} (u(y)-u(x)) \mu(x, \mathrm{d}y)
\end{equation}
for $p > 1$ and $f\in L^q(Q)$ for some sufficiently large $q$. 
The operator $L$ is clearly determined by the family of measures $(\mu(x,\d y))_{x\in\R^n}$. In the special case, when $L$ is the generator of a L\'evy process, $\mu(x,A)$ measures the number of expected jumps from $x$ into the set $A$ within the unit time interval. However, the class of operators that we consider in this paper is more involved and for that reason we first take a look at an important example.
Let $n\in\N$. For $s_1, \cdots, s_n \in (0,1)$, we define
\begin{equation*}
\mu_{\mathrm{axes}}(x,\mathrm{d}y) = \sum_{k=1}^n s_k(1-s_k) |x_k-y_k|^{-1-s_k p} \mathrm{d}y_k \prod_{i\neq k} \delta_{x_i}(\mathrm{d}y_i).
\end{equation*}
This family plays a central role in our paper, since admissible operators resp. families of measures will be defined on the basis of $\ma$.
Given $x\in\R^n$, the measure $\ma(x,\cdot)$ only charges differences that occur along the axes
\[ \{x+te_k \, | \, t\in\R\} \quad \text{for } k\in\{1,\dots,n\}. \]
Hence, we can think of the operator $Lu$ for $\mu(x,\cdot)=\ma(x,\cdot)$
as the sum of one-dimensional fractional $p$-Laplacian in $\R^n$ with orders of differentiability $s_1,\dots,s_n\in(0,1)$ depending on the respective direction.
In particular $\ma(x,\cdot)$ does not possess a density with respect to the Lebesgue measure.
An interesting phenomenon for the case $p=2$ and $s=s_1=\dots=s_n$ is that on one hand the corresponding energies for the fractional Laplacian and $L$ are comparable. On the other hand (for sufficiently good functions) both operators converge to the Laplace operator as $s\nearrow 1$. It is known that the fractional $p$-Laplacian converges to the $p$-Laplacian (see \cite[Theorem 2.8]{BucSqu21} or \cite[Lemma 5.1]{dTGCV20} for details), that is  defined by
\[ \Delta_pu(x) = \div\left(|\nabla u(x)|^{p-2}\nabla u(x)\right). \]
However, the operator $L$ for $\mu(x,\cdot)=\ma(x,\cdot)$ converges for any $p>1$ and $s=s_1=\dots=s_n$ to the following local operator (up to a constant depending on $p$ only)
\begin{equation} \label{eq:Aploc}
  A_{\text{loc}}^pu(x)=\sum_{i=1}^n \frac{\partial}{\partial x_i} \left(\left|\frac{\partial u(x)}{\partial x_i}\right|^{p-2}\frac{\partial u(x)}{\partial x_i}\right) = \div\left(a\left(\nabla u(x)\right)\right)
\end{equation}
 as $s\nearrow 1$, where $a:\R^n\to\R^n$ with $a(z) = (|z_i|^{p-2}z_i)_{i\in\{1,\dots,n\}}$. This convergence is a direct consequence of the convergence for the one-dimensional fractional $p$-Laplacian and the summation structure of the operator for $\ma$. For details, we refer the reader to \Cref{prop:convergence}.
The operator $A_{\text{loc}}^p$ is known as orthotropic $p$-Laplacian and is a well-known operator in analysis (see for instance \cite[Chapter 1, Section 8]{Lions69}). This operator is sometimes also called pseudo $p$-Laplacian.
Minimizers for the corresponding energies have been studied in \cite{BEKA04}, where the authors prove for instance H\"older continuity of minimizers. In \cite{BBLV18},  local Lipschitz regularity for weak solutions to orthotropic $p$-Laplace equations for $p\geq 2$ and every dimension is proved. The case, when $p$ is allowed to be different in each direction, is also studied in several papers.
For instance in \cite{PalaPseudo}, the authors introduce anisotropic De Giorgi classes and study related problems. Another interesting paper studying such operators with nonstandard growth condition is \cite{BB20}, where the authors show that bounded local minimizers are locally Lipschitz continuous. For further results, we refer the reader to the references given in the previously mentioned papers.

The two local operators $\Delta_p$ and $A_{\text{loc}}^p$ are substantially different, as for instance $\Delta_p$ is invariant under orthogonal transformation, while $A_{\text{loc}}^p$ is not. One strength of our results is that they are robust and we can recover results for the orthotropic $p$-Laplacian by taking the limit. 

One way to deal with the anisotropy of $\ma$, is to consider for given 
$s_1,\dots,s_n\in(0,1)$ a class of suitable rectangles instead of cubes or balls. For this purpose we define $s_{\max} = \max\lbrace s_1, \cdots, s_n \rbrace$.
\begin{definition}\label{def:M_r}
For $r>0$ and $x\in\R^n$ we define 
\begin{align*}
M_r(x) =\bigtimes_{k=1}^n 
\left(x_k-r^{\frac{s_{\max}}{s_k}},x_k+r^{\frac{s_{\max}}{s_k}}\right) 
\quad \text{ and } M_r = M_r(0) \,.
\end{align*}
\end{definition}
The advantage of taking these cubes is that they take the anisotropy of the measures resp. operators into account and the underlying metric measure space is a doubling space. The choice of $s_{\max}$ in the definition of $M_r(x)$ is not important. It can be replaced by any positive number $\varsigma \geq s_{\max}$. We only need to ensure that $M_r(x)$ are balls in a metric measure space with radius $r > 0$ and center $x \in \mathbb{R}^n$. This allows us to use known results on doubling spaces like the John--Nirenberg inequality or results on the Hardy--Littlewood maximal function. 

In the spirit of \cite{CKW19}, for each $k \in \lbrace 1,\dots,n\rbrace$, we define $E_r^k(x) = \lbrace y \in \R^n : \vert x_k - y_k \vert < r^{s_{\max}/{s_k}}\rbrace$. Note, that
\begin{align}
\label{def:E_r}
M_r(x) = \bigcap_{k = 1}^n E_r^k(x).
\end{align}
We consider families of measures $\mu(x,\d y)$ which are given through certain properties regarding the reference family $\ma(x,\d y)$.
Let us introduce and briefly discuss our assumptions on the families $(\mu(x,\cdot))_{x\in\R^n}$.
\begin{assumption}\label{assumption:symmetry}
We assume
\begin{equation*}
\sup_{x\in\R^n} \int_{\mathbb{R}^n} (|x-y|^p \land 1) \mu(x,\mathrm{d}y) < \infty
\end{equation*}
and for all sets $A,B \in \mathcal{B}(\R^n)$:
\begin{align*}
\int_A \int_B \mu(x,\d y) \d x = \int_B \int_A \mu(x,\d y) \d x.
\end{align*}
\end{assumption}
\Cref{assumption:symmetry} provides integrability and symmetry of the family of measures. 
Furthermore, we assume the following tail behavior of $(\mu(x,\cdot))_{x\in\R^n}$.
\begin{assumption}\label{assumption:tail}
There is $\Lambda\geq 1$ such that for every $x_0 \in \R^n$, $k \in \lbrace 1, \dots , n \rbrace$ and all $r>0$
\begin{align*}
\mu(x_0, \R^n \setminus E_{r}^k(x_0)) \le 
\Lambda (1-s_k)r^{-ps_{\max}}.
\end{align*}
\end{assumption}
Note that \Cref{assumption:tail} is a stronger assumption than an assumption on the volume on the complement of every $M_r(x_0)$. It gives an appropriate tail behavior for the family of measures in each direction separately and allows us to control the appearing constants in our tail estimate in all directions. This is necessary to prove robust estimates for the corresponding operators. \\
Note that by \Cref{assumption:tail} and \eqref{def:E_r}, we have 
\begin{align}
\label{assmu1}
 \mu(x_0, \R^n \setminus M_{\rho}(x_0)) \le \sum_{k = 1}^n \mu(x_0, \R^n \setminus E_{\rho}^k(x_0)) \le \Lambda \sum_{k = 1}^n (1-s_k) \rho^{-ps_{\max}} \leq \Lambda n\rho^{-ps_{\max}}. 
 \end{align}
Hence, \eqref{assmu1} shows that \Cref{assumption:tail} implies $\mu(x_0,\R^n\setminus M_{\rho}(x_0)) \leq c\ma(x_0,\R^n\setminus M_{\rho}(x_0))$ for all $x_0\in\R^n$. \\
Finally, we assume local comparability of corresponding functionals. 
For this purpose, we define for any open and bounded $\Omega\subset \R^n$ 
 \begin{equation*}
\mathcal{E}_\Omega^{\mu}(u,v) = \int_\Omega \int_\Omega |u(y) - u(x)|^{p-2}(u(y)-u(x))(v(y)-v(x)) \mu(x, \mathrm{d}y) \mathrm{d}x
\end{equation*}
and $\cE^{\mu}(u,v)=\cE_{\R^n}^{\mu}(u,v)$ whenever these quantities are finite.
\begin{assumption}\label{assumption:comparability}
There is $\Lambda\geq 1$ such that for every $x_0 \in \R^n$, $\rho \in (0,3)$ and every \\
$u \in L^p(M_{\rho}(x_0))$:
\begin{align}
\label{assmu3}
& \Lambda^{-1} \cE^{\mu}_{M_{\rho}(x_0)}(u,u) \le \cE^{\ma}_{M_{\rho}(x_0)}(u,u) \le \Lambda \cE^{\mu}_{M_{\rho}(x_0)}(u,u).
\end{align}
\end{assumption}
Local comparability of the functionals is an essential assumption on the family of measures. It tells us that our family of measures can vary from our reference family in the given sense of local functionals without losing
crucial information on $(\mu(x,\cdot))_{x\in\R^n}$ like functional inequalities, which we deduce for the explicitly known family $(\ma(x,\cdot))_{x\in\R^n}$. This assumption allows us for instance to study operators of the form \eqref{def:nonlocaloperator} for $\ma$ in the general framework of bounded and measurable coefficients. We emphasize that further examples of families of measures satisfying \eqref{assmu3} can be constructed similarly to the case $p=2$ (see \cite[Section 9]{CKW19}).\\
In this paper, we study nonlocal operators of the form \eqref{def:nonlocaloperator} for families of measures that satisfy the previously given assumptions.
\begin{definition}\label{def:admissible}
Let $p> 1$, $\Lambda \ge 1$, and $s_1,\dots,s_n\in[s_0,1)$ be given for some $s_0\in(0,1)$.
We call a family of measures $(\mu(x,\cdot))_{x\in\R^n}$ admissible with regard to $(\ma(x,\cdot))_{x\in\R^n}$,
if it satisfies \Cref{assumption:symmetry}, \Cref{assumption:tail}, and \Cref{assumption:comparability}. We denote the class of such measures by $\cK(p,s_0,\Lambda)$.
\end{definition}
It is not hard to see that the family $(\ma(x,\cdot))_{x\in\R^n}$ is admissible in the above sense. Note that \Cref{assumption:symmetry} and \Cref{assumption:comparability} are clearly satisfied. Furthermore, for every $x_0 \in \R^n$, $k \in \lbrace 1, \dots , n \rbrace$ and all $r>0$
\[ \ma(x_0, \R^n \setminus E_{r}^k(x_0)) = 2s_k(1-s_k) \int_{r^{s_{\max}/s_k}}^{\infty} h^{-1-s_kp} = \frac{2(1-s_k)}{p}r^{-s_{\max}p}, \]
which shows \Cref{assumption:tail} for $\Lambda=\frac{2}{p}.$

The purpose of this paper is to study weak solutions to nonlocal equations governed by the class of operators $L$ as in \eqref{def:nonlocaloperator}. In order to study weak solutions, we need appropriate Sobolev-type function spaces which guarantee regularity and integrability with respect to $\mu$.
\begin{definition}\label{VHomega}
Let $\Omega\subset\R^n$ open and $p> 1$. We define the function spaces
\begin{align*}  
  V^{p,\mu}(\Omega|\R^n)  &= \Big\{ u: \,\R^n\to\R \text{ meas.} \, | \, u\bigr|_{\Omega}\in 
L^p(\Omega), (u,u)_{V^{p,\mu}(\Omega|\R^n)} <\infty\Big\}\,, 
 \\
 H^{p,\mu}_{\Omega}(\R^n) &= \Big\{ u: \,\R^n\to\R \text{ meas.}  \, | \, u\equiv 0 \text{ on } 
\R^n\setminus\Omega, \|u\|_{H^{p,\mu}_{\Omega}(\R^n)}<\infty \Big\}, 
\end{align*}
where
\begin{align*}
(u,v)_{V^{p,\mu}(\Omega|\R^n)} &= \int_{\Omega}\int_{\R^n} 
|u(y) - u(x)|^{p-2}(u(y)-u(x))(v(y)-v(x))\, \mu(x,\d y)\, \d x \,, \\
\|u\|_{H^{p,\mu}_{\Omega}(\R^n)}^p &= \|u\|_{L^p(\Omega)}^p + 
\int_{\R^n}\int_{\R^n} |u(y)-u(x)|^p\mu(x,\d y)\,\d x \,.
\end{align*}
\end{definition}
The space $V^{p,\mu}(\Omega|\R^n)$ can be seen as a nonlocal analog of the space $H^{1,p}(\Omega)$. 
It provides fractional regularity (measured in terms of $\mu$) inside of $\Omega$ 
and integrability on $\R^n \setminus \Omega$. The space $V^{p,\mu}(\Omega|\R^n)$ will serve as solution space.
On the other hand, the space $H^{p,\mu}_{\Omega}(\R^n)$ can be seen as a nonlocal analog of 
$H^{1,p}_0(\Omega)$. See \cite{FKV15} and \cite{DyKa17} for further studies of 
these spaces in the case $p=2$. 

We are interested in finding robust regularity estimates for weak solutions to a class of nonlocal equations. This means that the constants in the regularity estimates do not depend on the orders of differentiability of the integro-differential operator itself but only on a lower bound of the orders. Let us formulate the main results of this paper. For this purpose we define $\bar{s}$ to be the harmonic mean of the orders $s_1,\dots,s_n$, that is
\begin{equation*}
 \bar{s} = \left(\frac{1}{n} \sum_{k=1}^n \frac{1}{s_k}\right)^{-1}.
\end{equation*}
It is well known that the Harnack inequality fails for weak solutions to singular equations of the type \eqref{eq:PDE}. 
Even in the case $p=2$ and $s_1=\dots=s_n$, the Harnack inequality does not hold (See for instance \cite{BoSz07, BaCh10}). Our first main result is a weak Harnack inequality for weak supersolutions to equations of the type \eqref{eq:PDE}. Throughout the paper, we denote by $p_{\star}=np/(n-p\bar{s})$ the Sobolev exponent, which will appear in \Cref{thm:sobolev}.

\begin{theorem}[Weak Harnack inequality]\label{thm:weak_Harnack}
Let $\Lambda \ge 1$ and $s_1,\dots,s_n\in[s_0,1)$ be given for some $s_0\in(0,1)$. Let $1<p< n/\bar{s}$ and $f\in L^{q/(p\bar{s})}(M_1)$ for some $q>n$.
There are $p_0 = p_0(n,p,p_{\star},s_0,q,\Lambda)\in(0,1)$ and $C = C(n,p,p_{\star},s_0,q,\Lambda) > 0$ such that for each $\mu\in\cK(p,s_0,\Lambda)$ and every $u\in V^{p,\mu}(M_1|\R^n)$ satisfying $u\geq 0$ in $M_1$ and
\[ \cE^{\mu}(u,\varphi) \geq  (f,\varphi) \quad \text{for every non-negative } \varphi\in H^{p,\mu}_{M_1}(\R^n),\]
the following holds:
\begin{equation}\label{eq:thm:weakHarnack}
\begin{aligned}
\inf_{M_{1/4}} u \geq C \left( \fint_{M_{1/2}} u^{p_0}(x) \,\mathrm{d}x \right)^{1/p_0} &- \sup_{x \in M_{15/16}} 2 \left( \int_{\mathbb{R}^n \setminus M_1} u^-(z)^{p-1} \mu(x, \mathrm{d}z) \right)^{1/(p-1)} \\
& - \|f\|_{L^{q/(p\bar{s})}(M_{15/16})}.
\end{aligned}
\end{equation}
\end{theorem}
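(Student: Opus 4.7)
My plan is to follow the classical three-step Moser strategy for the weak Harnack inequality, adapted to the nonlinear nonlocal setting on the doubling metric measure space whose balls are the rectangles $M_r(x_0)$. Throughout I would work with the shifted supersolution $\bar u = u + d$, where
\begin{equation*}
 d = \sup_{x\in M_{15/16}} 2\Big(\int_{\R^n\setminus M_1} u^-(z)^{p-1}\mu(x,\d z)\Big)^{1/(p-1)} + \|f\|_{L^{q/(p\bar s)}(M_{15/16})}
\end{equation*}
is chosen so that $\bar u$ is essentially a nonnegative supersolution modulo a controlled right-hand side. The shift $d$ absorbs the two negative tail-and-source terms on the right of \eqref{eq:thm:weakHarnack}, and it suffices to establish $\inf_{M_{1/4}}\bar u \geq C(\fint_{M_{1/2}}\bar u^{p_0})^{1/p_0}$.

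The first step is to derive a nonlocal Caccioppoli-type estimate by testing the supersolution inequality against $\varphi=\eta^p\bar u^{1-p-\beta}$ with parameter $\beta\in\R$ and a cutoff $\eta$ supported in $M_r(x_0)$. A one-sided algebraic inequality for the $p$-Laplace nonlinearity,
\begin{equation*}
 |a-b|^{p-2}(a-b)\bigl(A^{1-p-\beta}-B^{1-p-\beta}\bigr) \geq c_{p,\beta}\,|G_\beta(A)-G_\beta(B)|^p - (\text{cutoff error}),
\end{equation*}
where $G_\beta$ is either the power $\bar u^{-\beta/p}$ or (in the limiting case $\beta=p-1$) the logarithm $\log\bar u$, produces the fundamental energy bound on $M_r(x_0)$. \Cref{assumption:comparability} then allows me to replace $\mu$ by $\ma$ in the local part, \Cref{assumption:tail} controls the long-range contribution by $d^{p-1}$, and the source is absorbed via H\"older's inequality and the embedding $L^{q/(p\bar s)}\hookrightarrow (H^{p,\ma})^\ast$ coming from the earlier Sobolev inequality \Cref{thm:sobolev}.

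The second step splits into two branches. In the limiting case $\beta = p-1$, the Caccioppoli estimate degenerates into a $\BMO$-type bound
\begin{equation*}
 \fint_{M_r(x_0)} |\log \bar u - (\log\bar u)_{M_r(x_0)}|\,\d x \leq C
\end{equation*}
on every rectangle contained in $M_{3/4}$. Invoking the John--Nirenberg inequality on the doubling metric measure space (valid by the remark after \Cref{def:M_r}) gives exponential integrability of $\log\bar u$, so that for some small $p_0>0$
\begin{equation*}
 \Big(\fint_{M_{1/2}}\bar u^{p_0}\,\d x\Big)^{1/p_0}\Big(\fint_{M_{1/2}}\bar u^{-p_0}\,\d x\Big)^{1/p_0}\leq C.
\end{equation*}
For $\beta>0$ the Caccioppoli estimate is combined with \Cref{thm:sobolev} on $M_r(x_0)$ and iterated along a sequence of shrinking rectangles $r_j\downarrow 1/4$ to yield the negative-moment bound $\inf_{M_{1/4}}\bar u \geq C(\fint_{M_{1/2}}\bar u^{-p_0})^{-1/p_0}$. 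Multiplying this with the John--Nirenberg consequence and undoing the shift produces \eqref{eq:thm:weakHarnack}.

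The main obstacle is the logarithmic Caccioppoli estimate in the $p\neq 2$ anisotropic setting. The double integral has to be split according to $\{u(y)>u(x)\}$ vs.\ $\{u(y)\leq u(x)\}$ so that the positivity from the supersolution property can be exploited together with the one-sided algebraic inequality; moreover all constants must remain robust as $s_k\nearrow 1$, which is where \Cref{assumption:tail} with its explicit $(1-s_k)$ factor, together with the reduction to $\ma$ via \Cref{assumption:comparability}, plays the decisive role. A secondary delicate point is to carry the source term $f\in L^{q/(p\bar s)}(M_{15/16})$ through the Moser iteration using Young's inequality and absorption, so that the final constant depends only on $n,p,p_\star,s_0,q,\Lambda$ and not on the individual orders $s_k$.
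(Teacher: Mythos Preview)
Your proposal is correct and follows essentially the same route as the paper: a logarithmic Caccioppoli estimate (the paper's \Cref{lemma:log}) combined with the Poincar\'e inequality and John--Nirenberg to flip the sign of the exponent (\Cref{thm:flipsigns}), and a separate Caccioppoli estimate for negative powers based on the algebraic identity of \Cref{lem:alg_ineq} that feeds the Moser iteration (\Cref{lem:iteration}, \Cref{lem:inf}); your shift by $d$ plays exactly the role of the paper's lower bound on $\epsilon$. One small slip: with your parametrization $\varphi=\eta^p\bar u^{\,1-p-\beta}$ and $G_\beta=\bar u^{-\beta/p}$, the logarithmic limiting case is $\beta\to 0^+$ (test function $\bar u^{1-p}$), not $\beta=p-1$.
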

Although the weak Harnack inequality provides an estimate on the infimum only, it is sufficient to prove a decay of oscillation for bounded weak solutions and therefore a local H\"older estimate.
\begin{theorem}[Local H\"older estimate]\label{thm:Holder}
Let $\Lambda \ge 1$ and $s_1,\dots,s_n\in[s_0,1)$ be given for some $s_0\in(0,1)$. Let $1<p< n/\bar{s}$ and $f\in L^{q/(p\bar{s})}(M_1)$ for some $q>n$.
There are $\alpha = \alpha(n,p,p_{\star},s_0,q,\Lambda)\in(0,1)$ and $C = C(n,p,p_{\star},s_0,q,\Lambda) > 0$ such that for each $\mu\in\cK(p,s_0,\Lambda)$ and every $u\in V^{p,\mu}(M_1|\R^n)\cap L^{\infty}(\R^n)$ satisfying
\[ \cE^{\mu}(u,\varphi) =  (f,\varphi) \quad \text{for every } \varphi\in H^{p,\mu}_{M_1}(\R^n),\]
the following holds: $u\in C^{\alpha}(\overline{M}_{1/2})$ and 
\begin{equation*}
\|u\|_{C^\alpha(\overline{M}_{1/2})} \leq C\left( \|u\|_{L^{\infty}(\R^n)} + \|f\|_{L^{q/(p\bar{s})}(M_{15/16})} \right).
\end{equation*}
\end{theorem}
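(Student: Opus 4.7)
The plan is to iterate the weak Harnack inequality of Theorem~\ref{thm:weak_Harnack} on a geometric sequence of shrinking anisotropic rectangles around a fixed $x_0\in\overline{M}_{1/2}$, deduce a power-type decay of oscillation, and conclude Hölder continuity. For $r\in(0,1/2]$ and $x_0\in\overline{M}_{1/2}$ set $\Phi_{r,x_0}\colon M_1\to M_r(x_0)$ by $\Phi_{r,x_0}(y)=x_0+(r^{s_{\max}/s_1}y_1,\dots,r^{s_{\max}/s_n}y_n)$. For $w(y)=u(\Phi_{r,x_0}(y))$ one verifies that $w$ solves an equation of the same type on $M_1$ with respect to a rescaled family $\tilde\mu\in\cK(p,s_0,c\Lambda)$---the tails scale with exponent $ps_{\max}$ uniformly across coordinates because Assumption~\ref{assumption:tail} is formulated direction-by-direction, and Assumption~\ref{assumption:comparability} is scale-invariant---and with right-hand side $\tilde f$ satisfying $\|\tilde f\|_{L^{q/(p\bar s)}(M_{15/16})}\leq c\,r^{\gamma}\|f\|_{L^{q/(p\bar s)}(M_{15r/16}(x_0))}$, where $\gamma=s_{\max}p(q-n)/q>0$ because $q>n$.

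Next I prove a single-step oscillation decay. Let $\omega(r)=\osc_{M_r(x_0)}u$, $m(r)=\inf_{M_r(x_0)}u$, $M(r)=\sup_{M_r(x_0)}u$. By a Krylov--Safonov dichotomy, at least one of the non-negative functions $u-m(r)$, $M(r)-u$ is $\geq\omega(r)/2$ on at least half of $M_{r/2}(x_0)$ in Lebesgue measure, and both are weak solutions of $Lw=\pm f$ on $M_r(x_0)$ because $L$ is translation-invariant in the value. Applying Theorem~\ref{thm:weak_Harnack} (after rescaling via $\Phi_{r,x_0}$) to whichever meets the measure condition bounds $\inf_{M_{r/4}(x_0)}$ of that function from below by $c\,\omega(r)$ minus a nonlocal tail minus the rescaled $L^{q/(p\bar s)}$ norm of $f$, yielding
\begin{equation*}
\omega(r/4)\;\leq\;\theta\,\omega(r)\;+\;C\,T_\pm(r)\;+\;C\,r^{\gamma}\,\|f\|_{L^{q/(p\bar s)}(M_1)},
\end{equation*}
with $\theta=\theta(n,p,p_\star,s_0,q,\Lambda)\in(0,1)$. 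A dyadic decomposition of $\R^n\setminus M_r(x_0)$ into annuli $M_{4^jr}(x_0)\setminus M_{4^{j-1}r}(x_0)$ together with the bound~\eqref{assmu1} gives $T_\pm(r)\leq C\sum_{j\geq 1}4^{-jps_{\max}/(p-1)}\bigl(\omega(4^jr)+\|u\|_{L^\infty(\R^n)}\bigr)$.

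Setting $r_k=4^{-k}$ and $\omega_k=\omega(r_k)$, the previous estimates combine into
\begin{equation*}
\omega_{k+1}\;\leq\;\theta\,\omega_k\;+\;C\sum_{j=0}^{k-1}4^{-(k-j)ps_{\max}/(p-1)}\omega_j\;+\;C\bigl(\|u\|_{L^\infty(\R^n)}+\|f\|_{L^{q/(p\bar s)}(M_1)}\bigr)\,r_k^{\beta},
\end{equation*}
with $\beta=\min(\gamma,ps_{\max}/(p-1))>0$. A standard induction---choose $\alpha\in(0,\beta)$ so small that $\theta+C\sum_{j\geq1}4^{-j(ps_{\max}/(p-1)-\alpha)}<1$---then proves $\omega_k\leq C\,r_k^{\alpha}(\|u\|_{L^\infty(\R^n)}+\|f\|_{L^{q/(p\bar s)}(M_{15/16})})$ for every $k$. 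Interpolation between consecutive scales and the arbitrariness of $x_0\in\overline{M}_{1/2}$ produce a Hölder estimate in the anisotropic distance $d(x,y)=\max_k|x_k-y_k|^{s_k/s_{\max}}$, which dominates the Euclidean distance on the unit scale, giving the claimed $C^\alpha$ bound.

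The main obstacle will be closing the iteration against the nonlocal tail: the Hölder exponent $\alpha$ one can prove is necessarily strictly less than $ps_{\max}/(p-1)$, the polynomial decay rate of $\mu(x,\R^n\setminus M_\rho)$ provided by Assumption~\ref{assumption:tail}. A secondary subtlety is verifying that the rescaled family $\tilde\mu$ lies in $\cK(p,s_0,c\Lambda)$ with $c$ independent of $r$; this hinges on the direction-by-direction form of Assumption~\ref{assumption:tail}, which precisely matches the non-uniform dilation exponents $s_{\max}/s_k$ of $\Phi_{r,x_0}$. Finally, one must ensure the Krylov--Safonov dichotomy is quantitatively compatible with the $L^{p_0}$-average on the right-hand side of Theorem~\ref{thm:weak_Harnack}, which is standard but relies on the doubling property of $(\R^n,d,\mathrm{Leb})$.
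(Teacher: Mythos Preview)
Your overall strategy—Krylov–Safonov dichotomy, rescaling, weak Harnack, annular decomposition of the tail, geometric iteration—is the same as the paper's. But the iteration as you wrote it does not close, and the sentence ``choose $\alpha\in(0,\beta)$ so small that $\theta+C\sum_{j\geq1}4^{-j(ps_{\max}/(p-1)-\alpha)}<1$'' hides the gap. As $\alpha\to 0$ that sum does \emph{not} tend to $0$; it tends to the fixed positive number $S_0=\sum_{j\geq1}4^{-jps_{\max}/(p-1)}$. So your condition becomes $\theta+C S_0<1$, i.e.\ $C S_0<1-\theta$. Here $1-\theta$ is the tiny gain $C_{\mathrm{wH}}2^{-1-1/p_0}$ coming from the weak Harnack inequality, while $C$ carries the constants $\Lambda,n$ from \eqref{assmu1} and from comparing $M_\rho(x)$ with $M_\rho(x_0)$. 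There is no mechanism in your argument to force $C S_0$ below $1-\theta$, and generically it is not.

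The missing idea is that on the $j$-th annulus the negative part of the shifted supersolution is not merely $\le\omega(4^jr)$ but $\le\omega(4^jr)-\omega(r)$; after normalizing by the current oscillation this becomes $\le 2(4^{\alpha j}-1)$, which \emph{does} vanish as $\alpha\to 0$ for each fixed $j$. The paper implements this by tracking nested intervals $a_k\le u\le b_k$ with $b_k-a_k=4^{-\alpha k}$ prescribed, and working with the rescaled function $v=2(u\circ\Psi-a_k)/(b_k-a_k)$. Then the induction hypothesis gives $v\ge 2(1-4^{\alpha j})$ on $M_{4^j}$, hence $v^-\le 2(4^{\alpha j}-1)$, and the tail
\[
\sum_{j\ge1}\bigl(2(4^{\alpha j}-1)\bigr)^{p-1}4^{-jps_0}
\]
can be made smaller than any prescribed $\kappa$ by first truncating at a large $l$ (using $\alpha(p-1)<ps_0$) and then sending $\alpha\to 0$ in the finitely many remaining terms. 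This decouples the smallness of the tail from the size of the weak-Harnack constants and is what allows the iteration to close. If you rewrite your argument with the $(a_k,b_k)$ bookkeeping (or, equivalently, with the sharper annular bound $\omega(4^jr)-\omega(r)$ and a normalization), the rest of your outline goes through.
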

Note that the result needs global boundedness of weak solutions. The same assumption is also needed in the previous works \cite{CKW19, DK20, CK20}.
Global and local boundedness of weak solutions to anisotropic nonlocal equations are nontrivial open questions. 

Furthermore, note that the general case, replacing $M_{\frac{1}{4}}, M_{\frac{1}{2}}, M_{\frac{15}{16}}, M_1$ by $M_{\frac{r}{4}}(x_0)$, $M_{\frac{r}{2}}(x_0)$, $M_{\frac{15r}{16}}(x_0),$ $M_r(x_0)$, for $x_0\in\R^n$ and $r\in(0,1]$ follows by a translation and anisotropic scaling argument introduced in \Cref{sec:hoeld}. See also \cite{CK20}.

Let us comment on related results in the literature. 
The underlying ideas in developing regularity results for uniformly elliptic operators in divergence form with bounded and measurable coefficients go back to the influential contributions by De Giorgi, Nash and Moser (See \cite{DEGIORGI, NASH, MOSER}). These works led to many further results in various directions.
Similar results for nonlocal operators in divergence form have been obtained by several authors including the works \cite{Kin20, Bar09, BLH, CAFFVASS, KUMA, CHENKUMAWANG,Coz17, DCKP16, DyKa17, FallReg, KASFELS,  kassmann-apriori, KassSchwab, KuMiSi15, Rod21, Ming11, Mosconi,  Str17, Str18}. See also the references therein. For further regularity results concerning nonlocal equations governed by fractional $p$-Laplacians, we refer the reader to \cite{KuMiSi15, ToIrFe15, NOW20, NOW21, NOW21b, NOW21c, BrLiSc18}, \\
In \cite{DCKP16}, the authors extend the De Giorgi--Nash--Moser theory to a class of fractional $p$-Laplace equations. They provide the existence of a unique minimizer to homogenous equations and prove local regularity estimates for weak solutions. Moreover, in \cite{DCKP14}, the same authors prove a general Harnack inequality for weak solutions. 

Nonlocal operators with anisotropic and singular kernels of the type $\ma$ are studied in various mathematical areas such as stochastic differential equations and potential theory. In \cite{BaCh10}, the authors study regularity estimates for harmonic functions for systems of stochastic differential equations $\d X_t=A(X_{t-})\d Z_t$ driven by L\'evy processes $Z_t$ with L\'evy measure $\ma(0,\d y)$, where $2s_1=\dots=2s_n=\alpha$ and $p=2$. See also \cite{ZHANG15, Cha20, ROV16}. Sharp two sided bounds for the heat kernels are established in \cite{KuRy17, KKK19}. In \cite{KuRy20}, the authors prove the existence of transition density of the process $X_t$ and establish semigroup properties of solutions. 
The existence of densities for solutions to stochastic differential equations with H\"older continuous coefficients driven by L\'evy processes with anisotropic jumps has been proved in \cite{FRIPERU18}.
Such type of anisotropies also appear in the study of the anisotropic stable JCIR process, see \cite{FriPen20}.

Our approach follows mainly the ideas of \cite{DK20,CK20} and \cite{CKW19}.
In \cite{DK20}, the authors develop a local regularity theory for a class of linear nonlocal operators which covers the case $s=s_1=\dots=s_n\in(0,1)$ and $p=2$. Based on the ideas of \cite{DK20}, the authors in \cite{CK20} establish regularity estimates in the case $p=2$ for weak solutions in a more general framework which allows the orders of differentiability $s_1,\dots,s_n$ to be different. In \cite{CKW19} parabolic equations in the case $p=2$ and  possible different orders of differentiability are studied. That paper provides robust regularity estimates, which means the constants in the weak Harnack inequality and H\"older regularity estimate do not depend on the orders of differentiability but on their lower one, only. This allows us to recover regularity results for local operators from the theory of nonlocal operators by considering the limit.

The purpose of this paper is to provide local regularity estimates as in \cite{CK20} for operators which are allowed to be nonlinear. This nonlinearity leads to several difficulties like the need for a different proof for the discrete gradient estimate (See \Cref{lem:alg_ineq}). Since we cannot use the helpful properties of Hilbert spaces (like Plancherel's theorem), we also need an approach different from the one in \cite{CK20} to prove a Sobolev-type inequality.
One strength of this paper is the robustness of all results. This allows us to recover regularity estimates for the limit operators such as for the orthotropic $p$-Laplacian. 

Finally, we would like to point out that it is also interesting to study such
operators in non-divergence form. We refer the reader to \cite{SilvInd} for regularity results concerning the fractional Laplacian and to \cite{Lind16} for the fractional $p$-Laplacian. See also \cite{LeitSan20} for the anisotropic case. \\
Even in the most simple case, that is $p=2$ and $s=s_1=s_2=\dots=s_n$, regularity estimates for operators in non-divergence form of the type \eqref{def:nonlocaloperator} with $\mu=\ma$ lead to various open problems such as an Alexandrov--Bakelmann--Pucci estimate.

The authors wish to express their thanks to Lorenzo Brasco for helpful comments.

\subsection*{Outline} This paper is organized as follows. In \Cref{sec:aux}, we introduce appropriate cut-off functions and prove auxiliary results concerning functionals for admissible families of measures. One main result of that section is a Sobolev-type inequality (See \Cref{thm:sobolev}). In \Cref{sec:weak}, we prove the weak Harnack inequality and \Cref{sec:hoeld} contains the proof of the local H\"older estimate. In \Cref{sec:ineq}, we prove some auxiliary algebraic inequalities, and \autoref{sec:anisorect}, we briefly sketch the construction of appropriate anisotropic ``dyadic" rectangles. In \Cref{sec:sharp_maximal}, we use the anisotropic dyadic rectangles to sketch the proof of a suitable sharp maximal function theorem.

\section{Auxiliary results}\label{sec:aux}
This section is devoted to providing some general properties for the class of nonlocal operators that we study in the scope of this paper. The main auxiliary result is a robust Sobolev-type inequality.
  
Let us first introduce a class of suitable cut-off functions that will be useful for appropriate localization.

\begin{definition} \label{def:cut-off}
We say that $(\tau_{x_0,r,\lambda})_{x_0,r,\lambda} \subset C^{0,1}(\R^n)$ is an admissible family of cut-off functions if there is 
$c \ge 1$ such that for all $x_0 \in \R^n$, $r \in (0,1]$ and $\lambda \in (1,2]$, it holds that
\[ \begin{cases}
\supp(\tau) \subset M_{\lambda r}(x_0), \\
\| \tau \|_{\infty} \le 1, \\
\tau \equiv 1 \text{ on } M_r(x_0), \\
\| \partial_k \tau \|_{\infty} \le c \left( \lambda^{s_{\max}/s_k} -1 \right)^{-1}r^{-s_{\max}/s_k} \text{ for every } k \in \lbrace 1 \dots n \rbrace.
\end{cases} \]
\end{definition}

For brevity, we simply write $\tau$ for any such function from $(\tau_{x_0,r,\lambda})_{x_0,r,\lambda}$, if the respective choice of $x_0, r$ and $\lambda$ is clear. The existence of such functions is standard. \\
Recall the definition of admissible families of measures $\cK(p,s_0,\Lambda)$ from \Cref{def:admissible}
\begin{lemma}\label{lemma:cutoff}
Let $p> 1$, $\Lambda \ge 1$, and $s_1,\dots,s_n\in[s_0,1)$ be given for some $s_0\in(0,1)$. There is $C = C(n,p,\Lambda) > 0$ such that for each $\mu\in\cK(p,s_0,\Lambda)$, every $x_0\in\R^n$, $r \in (0,1]$, $\lambda \in (1,2]$ and every admissible cut-off function $\tau$, the following is true:
\begin{equation*}
\sup_{x\in\R^n} \int_{\mathbb{R}^n} |\tau(y)-\tau(x)|^p \mu(x,\mathrm{d}y) \leq C \left( \sum_{k=1}^n \left( \lambda^{s_{\max}/s_k}-1 \right)^{-ps_k} \right) r^{-ps_{\max}}.
\end{equation*}
\end{lemma}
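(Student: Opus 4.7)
The plan is to fix an arbitrary $x\in\R^n$ and carry out an anisotropic slab decomposition of $\R^n$ centred at $x$. For each $k\in\{1,\dots,n\}$, I set $\rho_k := (\lambda^{s_{\max}/s_k}-1)^{s_k/s_{\max}}\, r$, chosen so that $\rho_k^{s_{\max}/s_k} = (\lambda^{s_{\max}/s_k}-1)\, r^{s_{\max}/s_k}$ is precisely the length scale on which $\tau$ varies in the $k$th direction. Put $U := \bigcap_{k=1}^n E^k_{\rho_k}(x)$; then by De Morgan $\R^n\setminus U = \bigcup_k (\R^n\setminus E^k_{\rho_k}(x))$, so since the integrand is nonnegative,
\[
\int_{\R^n} |\tau(y)-\tau(x)|^p\mu(x,\d y) \le \int_U |\tau(y)-\tau(x)|^p\mu(x,\d y) + \sum_{k=1}^n \int_{\R^n\setminus E^k_{\rho_k}(x)} |\tau(y)-\tau(x)|^p\mu(x,\d y).
\]

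For each of the $n$ tail pieces I bound $|\tau(y)-\tau(x)|\le 2\|\tau\|_\infty \le 2$ and invoke \Cref{assumption:tail}:
\[
\int_{\R^n\setminus E^k_{\rho_k}(x)} |\tau(y)-\tau(x)|^p\mu(x,\d y) \le 2^p\Lambda(1-s_k)\rho_k^{-ps_{\max}} = 2^p\Lambda(1-s_k)(\lambda^{s_{\max}/s_k}-1)^{-ps_k} r^{-ps_{\max}}.
\]
For the near integral on $U$, the Lipschitz bounds on $\tau$ combined with the power-mean inequality $|\sum_k a_k|^p \le n^{p-1}\sum_k |a_k|^p$ give $|\tau(y)-\tau(x)|^p \le n^{p-1}\sum_k\|\partial_k\tau\|_\infty^p |y_k-x_k|^p$, so the near contribution is dominated by $n^{p-1}\sum_k \|\partial_k\tau\|_\infty^p \int_{E^k_{\rho_k}(x)} |y_k-x_k|^p\mu(x,\d y)$.

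The main technical step is thus a one-dimensional layer-cake estimate
\[
\int_{E^k_{\rho_k}(x)} |y_k-x_k|^p\,\mu(x,\d y) = \int_0^{\rho_k^{s_{\max}/s_k}} p\,t^{p-1}\,\mu\bigl(x,\{y\in E^k_{\rho_k}(x):|y_k-x_k|>t\}\bigr)\,\d t.
\]
The superlevel set is contained in $\R^n\setminus E^k_{t^{s_k/s_{\max}}}(x)$, so by \Cref{assumption:tail} its mass is at most $\Lambda(1-s_k)t^{-ps_k}$, and a direct integration yields
\[
\int_{E^k_{\rho_k}(x)} |y_k-x_k|^p\,\mu(x,\d y) \le p\Lambda(1-s_k)\int_0^{\rho_k^{s_{\max}/s_k}} t^{p(1-s_k)-1}\,\d t = \Lambda\,\rho_k^{ps_{\max}(1-s_k)/s_k}.
\]
The crucial point is that the $(1-s_k)$ from the tail assumption cancels precisely the $p(1-s_k)$ from the $t$-integration, so the resulting constant is $\Lambda$ alone—independent of $s_0$ and robust as $s_k\nearrow 1$.

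Plugging the gradient bound $\|\partial_k\tau\|_\infty\le c(\lambda^{s_{\max}/s_k}-1)^{-1}r^{-s_{\max}/s_k}$ and the explicit form of $\rho_k$ into this estimate produces exactly $c^p\Lambda\,(\lambda^{s_{\max}/s_k}-1)^{-ps_k}r^{-ps_{\max}}$ per direction, matching the target. Summing the near and tail contributions over $k$ and taking the supremum in $x$ yields the lemma with $C = C(n,p,\Lambda)$. The main obstacle is the anisotropy: a single-scale splitting based on $M_\rho(x)$ cannot simultaneously resolve the $n$ different variation lengths of $\tau$, which forces the coordinate-wise choice of $\rho_k$ and the slab decomposition above; and the robustness of $C$ in $s_0$ rests on the clean $(1-s_k)$-cancellation in the layer-cake computation.
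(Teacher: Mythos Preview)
Your proof is correct. The paper itself skips the proof and refers to \cite[Lemma 3.1]{CKW19}, noting only that for general $p$ one picks up a factor $n^{p-1}$ instead of $n$; your argument is precisely this adaptation, with the $n^{p-1}$ arising exactly where expected, from the power-mean inequality applied to the coordinate-wise Lipschitz bound on $\tau$.
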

\begin{proof}
We skip the proof. One can follow the lines of the proof from \cite[Lemma 3.1]{CKW19} and will get the same result with the factor $n^{p-1}$ instead of $n$.
\end{proof}
For future purposes, we deduce the following observation. It is an immediate consequence of the foregoing lemma.
\begin{corollary}\label{cor:quadrat}
Let $p> 1$, $\Lambda \ge 1$, and $s_1,\dots,s_n\in[s_0,1)$ be given for some $s_0\in(0,1)$. There is a constant $C = C(n,p,\Lambda) > 0$ such that for each $\mu \in \cK(p,s_0,\Lambda)$ and every $x_0 \in \R^n$, $r \in (0,1]$, $\lambda \in (1,2]$ and every admissible cut-off function $\tau$ and every $u \in L^p(M_{\lambda r}(x_0))$, it holds true that
\begin{align*}
  \int_{M_{\lambda r}(x_0)} \int_{\R^n \setminus M_{\lambda r}(x_0)} &|u(x)|^p|\tau(x)|^p \mu(x,\d y) \d x \\
  &\le C \left( \sum_{k=1}^n (\lambda^{s_{\max}/s_k} - 1)^{-ps_k}\right) r^{-ps_{\max}} \Vert u \Vert_{L^p(M_{\lambda r}(x_0))}^p.
\end{align*}
\end{corollary}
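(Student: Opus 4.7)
The plan is to reduce the corollary directly to \Cref{lemma:cutoff} by exploiting the support condition on $\tau$. Since $\tau$ is an admissible cut-off with $\supp(\tau)\subset M_{\lambda r}(x_0)$, every $y\in\R^n\setminus M_{\lambda r}(x_0)$ satisfies $\tau(y)=0$. Hence for $x\in M_{\lambda r}(x_0)$ and such $y$ one has $|\tau(y)-\tau(x)|^p = |\tau(x)|^p$, which rewrites the integrand as a piece of the cut-off energy.

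First I would apply Tonelli's theorem to pull $|u(x)|^p$ out of the inner integral and use the pointwise identity above to obtain
\begin{equation*}
\int_{M_{\lambda r}(x_0)}\!\!\int_{\R^n\setminus M_{\lambda r}(x_0)}\!|u(x)|^p|\tau(x)|^p\,\mu(x,\d y)\,\d x
= \int_{M_{\lambda r}(x_0)}|u(x)|^p\!\int_{\R^n\setminus M_{\lambda r}(x_0)}|\tau(y)-\tau(x)|^p\,\mu(x,\d y)\,\d x.
\end{equation*}
Next I would enlarge the inner domain of integration from $\R^n\setminus M_{\lambda r}(x_0)$ to all of $\R^n$ (this only adds a non-negative quantity) and invoke \Cref{lemma:cutoff} to bound the inner integral, uniformly in $x$, by $C\bigl(\sum_{k=1}^n(\lambda^{s_{\max}/s_k}-1)^{-ps_k}\bigr)r^{-ps_{\max}}$ with $C=C(n,p,\Lambda)$. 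Pulling this $x$-independent constant out of the outer integral leaves exactly $\|u\|_{L^p(M_{\lambda r}(x_0))}^p$, which yields the claimed estimate.

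I do not foresee any real obstacle: the only substantive input is \Cref{lemma:cutoff}, and the support property of $\tau$ makes the reduction immediate. One minor point to be careful about is verifying that $u$, which is only defined on $M_{\lambda r}(x_0)$, plays no role outside this set since the outer integral is over $M_{\lambda r}(x_0)$ anyway; this is why the hypothesis $u\in L^p(M_{\lambda r}(x_0))$ (rather than $L^p(\R^n)$) suffices.
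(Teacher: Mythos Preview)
Your proposal is correct and matches the paper's approach: the paper states that the corollary is an immediate consequence of \Cref{lemma:cutoff}, and your argument (using $\supp(\tau)\subset M_{\lambda r}(x_0)$ to rewrite $|\tau(x)|^p=|\tau(y)-\tau(x)|^p$ for $y\notin M_{\lambda r}(x_0)$, then applying the uniform bound from \Cref{lemma:cutoff}) is exactly how one makes this immediate.
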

Note that the constants in \Cref{lemma:cutoff} and \Cref{cor:quadrat} do not depend on $s_0$. Therefore, the lower bound $s_0 \leq s_k$ for all $k \in \lbrace 1,\cdots, n\rbrace$ can be dropped here.

\subsection{Functional inequalities}
This subsection is devoted to the proofs of a Sobolev and a Poincar\'{e}-type inequality.
We start our analysis by first proving a technical lemma, see also \cite[Lemma 4.1]{CKW19}.

\begin{lemma} \label{lem:cv}
Let $p >1$, $a \in (0,1]$, $b\geq 1$, $N \in \mathbb{N}$, $k \in \lbrace 1,\cdots, n \rbrace$, and $s_k \in (0,1)$. 
For any $u \in L^p(\mathbb{R}^n)$
\begin{equation*}
\begin{split}
&\int_{\mathbb{R}^n} \sup_{\rho > 0} \frac{1}{\rho^{(1+ps_k)b}} \int_{\mathbb{R}} |u(x) - u(x+he_k)|^p {\bf 1}_{[a\rho^{b},2a\rho^{b})} (|h|) \,\mathrm{d}h \,\mathrm{d}x \\
&\leq (2a)^{1+ps_k} N^{p(1-s_k)} \int_{\mathbb{R}^n} \int_{\mathbb{R}} \frac{|u(x) - u(x+he_k)|^p}{|h|^{1+ps_k}} {\bf 1}_{[\frac{a}{N}\rho^{b}, \frac{2a}{N}\rho^{b})} (|h|) \,\mathrm{d}h\,\mathrm{d}x.
\end{split}
\end{equation*}
\end{lemma}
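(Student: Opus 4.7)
The main idea is a chain-decomposition of a large increment into $N$ small ones, combined with a change of variables and translation invariance. Fix $x\in\R^n$ and $\rho>0$. For $h$ with $|h|\in[a\rho^b,2a\rho^b)$ I would write the telescoping identity
\[ u(x+he_k)-u(x)=\sum_{j=0}^{N-1}\bigl[u(x+(j+1)\tfrac{h}{N}e_k)-u(x+j\tfrac{h}{N}e_k)\bigr] \]
and apply the discrete Jensen inequality to obtain $|u(x+he_k)-u(x)|^p\le N^{p-1}\sum_{j=0}^{N-1}\bigl|u(x+(j+1)\tfrac{h}{N}e_k)-u(x+j\tfrac{h}{N}e_k)\bigr|^p$. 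This trades a $p$-th power difference at scale $|h|$ for a sum of $p$-th power differences at scale $|h|/N$.

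Next I would substitute $h'=h/N$ in the inner integral over $\R$, which rescales the indicator $\mathbf{1}_{[a\rho^b,2a\rho^b)}(|h|)$ to $\mathbf{1}_{[a\rho^b/N,2a\rho^b/N)}(|h'|)$ and introduces a Jacobian factor $N$. On the support of the new indicator one has $|h'|^{1+ps_k}\le(2a\rho^b/N)^{1+ps_k}$, so the prefactor $\rho^{-(1+ps_k)b}$ can be absorbed into a Gagliardo-type weight $|h'|^{-(1+ps_k)}$ at the cost of the multiplicative factor $(2a)^{1+ps_k}N^{-(1+ps_k)}$. This converts the left-hand integrand into a sum of Gagliardo-type difference quotients, and because the resulting bound (prior to integrating in $x$) depends on $\rho$ only through the support of the indicator, enlarging the indicator renders the bound $\rho$-free, so the outer supremum over $\rho$ on the LHS becomes harmless.

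The final step is to integrate over $x\in\R^n$ and use translation invariance: for each $j\in\{0,\dots,N-1\}$ the substitution $y=x+jh'e_k$ gives
\[ \int_{\R^n}\bigl|u(x+(j+1)h'e_k)-u(x+jh'e_k)\bigr|^p\,\d x=\int_{\R^n}|u(y+h'e_k)-u(y)|^p\,\d y, \]
independent of $j$, so the sum over $j$ collapses to $N$ identical copies and yields one further factor of $N$. Collecting the powers $N^{p-1}$ (Jensen) $\cdot N$ (Jacobian) $\cdot N$ (translation collapse) $\cdot N^{-(1+ps_k)}$ (weight transfer) produces $N^{p-ps_k}=N^{p(1-s_k)}$, which, together with $(2a)^{1+ps_k}$, matches the claimed constant. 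The main bookkeeping obstacle is coordinating the pointwise supremum over $\rho$ (sitting inside the $\d x$ integral on the LHS) with the translation step (which only applies after the $x$-integration): I would handle it by performing the chain decomposition and weight absorption pointwise in $(x,\rho)$, so the resulting $\rho$-independent bound lets the supremum be extracted before integrating in $x$ and invoking translation invariance.
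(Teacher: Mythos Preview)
Your approach---telescoping with Jensen ($N^{p-1}$), the substitution $h\mapsto h/N$ (Jacobian $N$), the weight transfer via $|h'|<2a\rho^b/N$ (factor $(2a/N)^{1+ps_k}$), and then Fubini plus translation invariance to collapse the $j$-sum (a further factor $N$)---is exactly what the paper does, and your constant bookkeeping matches. One small point: the paper retains the indicator $\mathbf{1}_{[a\rho^b/N,\,2a\rho^b/N)}$ on the right-hand side (with $\rho$ left as a free parameter), and this indicator is actually used in the subsequent Sobolev argument for a dyadic rearrangement, so rather than ``enlarging the indicator'' to kill the $\rho$-dependence you should simply keep it---the paper's own proof handles the $\sup_\rho$ no more explicitly than you do.
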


\begin{proof}
Let $I_a = [a\rho^{b}, 2a\rho^{b})$. By the triangle inequality and a simple change of variables, we have
\begin{equation*}
\begin{split}
&\int_{\mathbb{R}} |u(x) - u(x+he_k)|^p {\bf 1}_{I_a} (|h|) \,\mathrm{d}h \\
&\leq N^{p-1} \sum_{j=1}^N \int_{\mathbb{R}} \left| u\left( x + \frac{j-1}{N} he_k \right) - u\left(x+\frac{j}{N}he_k\right) \right|^p {\bf 1}_{I_a} (|h|) \,\mathrm{d}h \\
&= N^p \sum_{j=1}^N \int_{\mathbb{R}} |u(x + (j-1)he_k) - u(x+jhe_k)|^p {\bf 1}_{I_{a/N}} (|h|) \,\mathrm{d}h.
\end{split}
\end{equation*}
Since $|h| < \frac{2a}{N}\rho^{b}$, we obtain
\begin{equation*}
\begin{split}
&\int_{\mathbb{R}^n} \sup_{\rho > 0} \frac{1}{\rho^{(1+ps_k)b}} \int_{\mathbb{R}} |u(x) - u(x+he_k)|^p {\bf 1}_{I_a} (|h|) \,\mathrm{d}h \,\mathrm{d}x \\
&\leq N^p \left( \frac{2a}{N} \right)^{1+ps_k} \sum_{j=1}^N \int_{\mathbb{R}^n} \int_{\mathbb{R}} \frac{|u(x + (j-1)he_k) - u(x+jhe_k)|^p}{|h|^{1+ps_k}} {\bf 1}_{I_{a/N}} (|h|) \,\mathrm{d}h\,\mathrm{d}x.
\end{split}
\end{equation*}
We change the order of integration by Fubini's theorem and then use the change of variables $y=x+(j-1)he_k$ to conclude that
\begin{equation*}
\begin{split}
&\sum_{j=1}^N \int_{\mathbb{R}^n} \int_{\mathbb{R}} \frac{|u(x + (j-1)he_k) - u(x+jhe_k)|^p}{|h|^{1+ps_k}} {\bf 1}_{I_{a/N}} (|h|) \,\mathrm{d}h\,\mathrm{d}x \\
&= \sum_{j=1}^N \int_{\mathbb{R}} \int_{\mathbb{R}^n} \frac{|u(x + (j-1)he_k) - u(x+jhe_k)|^p}{|h|^{1+ps_k}} {\bf 1}_{I_{a/N}} (|h|) \,\mathrm{d}x\,\mathrm{d}h \\
&= \sum_{j=1}^N \int_{\mathbb{R}} \int_{\mathbb{R}^n} \frac{|u(y) - u(y+he_k)|^p}{|h|^{1+ps_k}} {\bf 1}_{I_{a/N}} (|h|) \,\mathrm{d}y\,\mathrm{d}h \\
&= N \int_{\mathbb{R}^n} \int_{\mathbb{R}} \frac{|u(y) - u(y+he_k)|^p}{|h|^{1+ps_k}} {\bf 1}_{I_{a/N}} (|h|) \,\mathrm{d}h\,\mathrm{d}y.
\end{split}
\end{equation*}
\end{proof}
Using the foregoing result for $b=s_{\max}/s_k$ allows us to prove a robust Sobolev-type inequality. Robust in this context means that the appearing constant in the Sobolev-type inequality is independent of $s_1,\dots,s_n$ and depends on the lower bound $s_0$ only.

Before we prove a robust Sobolev-type inequality, we recall the definition of the Hardy--Littlewood maximal function and sharp maximal function. For $u \in L^1_{\mathrm{loc}}(\mathbb{R}^n)$,
\begin{equation*}
{\bf M}u(x) = \sup_{\rho > 0} \fint_{M_\rho(x)} u(y) \,\d y \quad\text{and}\quad {\bf M}^{\sharp}u(x) = \sup_{\rho > 0} \fint_{M_\rho(x)} |u(y) - (u)_{M_{\rho}(x)}| \,\d y,
\end{equation*}
where $(u)_\Omega = \fint_\Omega u(z)\,\d z$. We will use the maximal function theorem and the sharp maximal function theorem. Note that $\mathbb{R}^n$ is equipped with the metric induced by rectangles of the form $M_r(x)$ and the standard Lebesgue measure. Since $|M_{2r}| = 2^n (2r)^{ns_{\max}/\bar{s}} \leq 2^{n/s_0} |M_r|$, this space is a doubling space with the doubling constant $2^{n/s_0}$.
\begin{theorem} \cite[Theorem 2.2]{Hein01} \label{thm:maximal}
Let $s_1, \dots, s_n \in [s_0, 1)$ be given for some $s_0 \in (0,1)$. Then, there is a constant $C_1 = C_1(n, s_0) > 0$ such that
\begin{equation*}
|\lbrace x \in \mathbb{R}^n : {\bf M}u(x) > t \rbrace| \leq \frac{C_1}{t} \|u\|_{L^1(\mathbb{R}^n)}
\end{equation*}
for all $t > 0$ and $u \in L^1(\mathbb{R}^n)$. For $p > 0$, there is a constant $C_p = C_p(n, p, s_0) > 0$ such that
\begin{equation*}
\|{\bf M}u(x)\|_{L^p(\mathbb{R}^n)} \leq C_p \|u\|_{L^p(\mathbb{R}^n)}
\end{equation*}
for all $u \in L^p(\mathbb{R}^n)$.
\end{theorem}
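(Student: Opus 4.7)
The plan is to deduce both estimates from the classical Hardy--Littlewood--Wiener machinery in a doubling measure space. The paragraph preceding the statement already observes that the rectangles $M_r(x)$ satisfy $|M_{2r}(x)| \leq 2^{n/s_0}|M_r(x)|$, which is the only geometric input needed, and the doubling constant depends solely on $n$ and $s_0$. Treating the $M_r(x)$ as quasi-balls endows $\R^n$ with a doubling quasi-metric measure space structure, after which the statement becomes a special case of the general maximal function theorem in Heinonen's book.

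First, I would establish the weak-type $(1,1)$ bound by a Vitali covering argument. Fix $u \in L^1(\R^n)$ and $t>0$; every $x$ with ${\bf M}u(x) > t$ admits a radius $\rho_x > 0$ such that $\int_{M_{\rho_x}(x)} |u(y)|\,\d y > t\,|M_{\rho_x}(x)|$. The Vitali $5r$-covering lemma, available in any doubling quasi-metric space, extracts a pairwise disjoint subfamily $\{M_{\rho_i}(x_i)\}_i$ whose fivefold dilates cover the superlevel set $\{{\bf M}u > t\}$. Iterating the doubling inequality a fixed number of times gives $|M_{5\rho_i}(x_i)| \leq C(n,s_0)\,|M_{\rho_i}(x_i)|$, and summation yields
\[ |\{{\bf M}u > t\}| \leq \sum_i |M_{5\rho_i}(x_i)| \leq C\sum_i |M_{\rho_i}(x_i)| \leq \frac{C}{t}\sum_i \int_{M_{\rho_i}(x_i)} |u|\,\d y \leq \frac{C}{t}\|u\|_{L^1(\R^n)}. \]
The strong $L^p$ estimate for $p \in (1,\infty)$ then follows by Marcinkiewicz interpolation between this weak-$(1,1)$ bound and the trivial $L^\infty$ bound $\|{\bf M}u\|_{L^\infty} \leq \|u\|_{L^\infty}$. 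Since the interpolation argument is purely measure-theoretic (a layer-cake decomposition of $\|{\bf M}u\|_{L^p}^p$), it transfers verbatim and yields a constant $C_p = C_p(n,p,s_0)$.

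The only nontrivial point is justifying the Vitali covering lemma in this anisotropic setting, which reduces to checking that each $M_r(x)$ really is the ball of radius $r$ centered at $x$ in a doubling quasi-metric space. The natural candidate is $d(x,y) = \max_k |x_k - y_k|^{s_k/s_{\max}}$, for which $M_r(x) = \{y : d(x,y) < r\}$ exactly; the quasi-triangle inequality holds (the exponents $s_k/s_{\max}$ lie in $(0,1]$), and the doubling condition is already verified. The standard greedy selection of mutually disjoint quasi-balls of maximal admissible radius then produces the covering lemma, with every appearing constant a polynomial in the doubling constant, hence depending only on $n$ and $s_0$ as claimed. This is precisely where the robustness in $s_1,\dots,s_n$ enters: one never needs to control any individual $s_k$, only the volume growth exponent $n/s_0$.
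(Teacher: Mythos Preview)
The paper does not give its own proof of this theorem; it is simply quoted from Heinonen's book (the citation \texttt{[Theorem 2.2]{Hein01}} is part of the theorem header). Your outline---Vitali $5r$-covering in a doubling space to get the weak $(1,1)$ bound, followed by Marcinkiewicz interpolation against the trivial $L^\infty$ bound---is exactly the standard argument found in that reference, so there is nothing to compare.

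Two small remarks. First, $d(x,y)=\max_k |x_k-y_k|^{s_k/s_{\max}}$ is in fact a genuine metric, not merely a quasi-metric: since each exponent $s_k/s_{\max}$ lies in $(0,1]$, the function $t\mapsto t^{s_k/s_{\max}}$ is subadditive, and the triangle inequality holds without a multiplicative constant (the paper confirms this explicitly in Section~\ref{sec:hoeld}). Second, the hypothesis ``$p>0$'' in the statement is a slip; the strong-type bound for the maximal operator of course requires $p>1$, which is what you prove and what the paper actually uses later (always with $1<p<n/\bar s$).
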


We were not able to find a reference for the sharp maximal function theorem for sets of the type $M_{\rho}$. Actually, we are not sure whether such result is available in the literature. However, one can follow the ideas of \cite[Section 3.4]{GrafakosMF}, where the $L^p$ bound is established for the sharp maximal function with cubes (instead of anisotropic rectangles). In order to prove the same result for the sharp maximal function with anisotropic rectangles, dyadic cubes have to be replaced by appropriate anisotropic ``dyadic" rectangles. We construct the anisotropic dyadic rectangles in \Cref{sec:anisorect} and prove the following theorem in \Cref{sec:sharp_maximal}. See also \Cref{sec:sharp_maximal} for the definition of the dyadic maximal function ${\bf M}_d u$.
\begin{theorem} \label{thm:sharp_maximal}
Let $s_1, \dots, s_n \in [s_0, 1)$ be given for some $s_0 \in (0,1)$ and let $0 < p_0 \leq p < \infty$. Then, there is a constant $C = C(n, p, s_0) > 0$ such that for all $u \in L^1_{\mathrm{loc}}(\mathbb{R}^n)$ with ${\bf M}_du \in L^{p_0}(\mathbb{R}^n)$,
\begin{equation*}
\|u\|_{L^p(\mathbb{R}^n)} \leq C \|{\bf M}^{\sharp}u\|_{L^p(\mathbb{R}^n)}.
\end{equation*}
\end{theorem}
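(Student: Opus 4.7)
The plan is to follow the classical Fefferman--Stein argument (see Section 3.4 of Grafakos) with Euclidean cubes replaced by the anisotropic dyadic rectangles constructed in \Cref{sec:anisorect}. Since $\R^n$ equipped with the family $(M_r(x))_{x,r}$ and Lebesgue measure is a doubling space with doubling constant $c_d \le 2^{n/s_0}$, the standard ingredients (Calder\'on--Zygmund decomposition, weak $(1,1)$ estimate for the dyadic maximal function, dyadic Lebesgue differentiation) should be available. As a preliminary reduction, dyadic Lebesgue differentiation gives $|u(x)| \le {\bf M}_d u(x)$ for a.e.\ $x$, so $\|u\|_{L^p} \le \|{\bf M}_d u\|_{L^p}$, and it suffices to prove $\|{\bf M}_d u\|_{L^p} \le C\|{\bf M}^{\sharp}u\|_{L^p}$.

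The heart of the argument is the good-$\lambda$ inequality: for every $\gamma \in (0,1)$ and $\lambda > 0$,
\begin{equation*}
\bigl|\{{\bf M}_d u > 2c_d\lambda,\ {\bf M}^{\sharp}u \le \gamma\lambda\}\bigr| \le C\gamma\, \bigl|\{{\bf M}_d u > \lambda\}\bigr|,
\end{equation*}
with $C=C(n,s_0)$. I would decompose the open level set $\{{\bf M}_d u > \lambda\}$ by Calder\'on--Zygmund into pairwise disjoint maximal dyadic rectangles $\{Q_j\}$ satisfying $\lambda < (|u|)_{Q_j} \le c_d\lambda$. For $x \in Q_j$, any dyadic witness $Q \ni x$ with $(|u|)_Q > 2c_d\lambda$ must lie inside $Q_j$ by maximality, and hence $\fint_Q |u - (u)_{Q_j}| > c_d\lambda$. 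Applying the weak $(1,1)$ bound for the dyadic maximal function to $v = (u - (u)_{Q_j}){\bf 1}_{Q_j}$ yields
\begin{equation*}
\bigl|\{x \in Q_j : {\bf M}_d u(x) > 2c_d\lambda\}\bigr| \le \frac{1}{c_d\lambda}\int_{Q_j} |u - (u)_{Q_j}|\,\d x.
\end{equation*}
If the intersection with $\{{\bf M}^{\sharp}u \le \gamma\lambda\}$ is nonempty, pick $y_j \in Q_j$ realising it; choose $\rho>0$ such that $Q_j \subseteq M_\rho(y_j)$ with $|M_\rho(y_j)| \le c_d |Q_j|$ (using the compatibility between the centered and dyadic rectangles), to conclude by doubling that $\fint_{Q_j}|u - (u)_{Q_j}| \le 2c_d {\bf M}^{\sharp}u(y_j) \le 2c_d \gamma\lambda$. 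Summing over $j$ gives the claim.

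For the integration step, set $I_N := \int_0^N p\lambda^{p-1}|\{{\bf M}_d u > 2c_d\lambda\}|\,\d\lambda$. The hypothesis ${\bf M}_d u \in L^{p_0}$ and the Chebyshev bound $|\{{\bf M}_d u > \lambda\}| \le \lambda^{-p_0}\|{\bf M}_d u\|_{L^{p_0}}^{p_0}$ guarantee that $I_N < \infty$ for each finite $N$. Splitting the set $\{{\bf M}_d u > 2c_d\lambda\}$ into its intersections with $\{{\bf M}^{\sharp}u > \gamma\lambda\}$ and its complement and using the good-$\lambda$ inequality,
\begin{equation*}
I_N \le C_p \gamma^{-p} \|{\bf M}^{\sharp}u\|_{L^p}^p + C\gamma(2c_d)^{-p} I_N.
\end{equation*}
Fixing $\gamma = \gamma(n,p,s_0)$ small enough to absorb the last term into the left-hand side, and then sending $N\to\infty$, yields $\|{\bf M}_d u\|_{L^p} \le C\|{\bf M}^{\sharp}u\|_{L^p}$, which combined with Step~1 completes the proof.

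The main obstacle is not the analytic machinery, which is standard once the geometric setup is fixed, but rather the combinatorial and geometric properties of the anisotropic dyadic rectangles from \Cref{sec:anisorect}: I would need a well-defined notion of parent with bounded doubling ratio, nestedness (any two rectangles intersect properly or are disjoint), existence of a Calder\'on--Zygmund stopping-time argument at an arbitrary level, and the compatibility estimate $|M_\rho(y)| \le c_d |Q|$ for the smallest centered rectangle containing a given dyadic $Q \ni y$. These are exactly the properties the authors develop in \Cref{sec:anisorect}; once they are secured, the classical Fefferman--Stein proof transfers verbatim and the constant depends only on $n, p$ and the doubling constant $2^{n/s_0}$.
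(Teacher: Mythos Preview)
Your overall strategy matches the paper's: reduce to $\|{\bf M}_d u\|_{L^p}\le C\|{\bf M}^{\sharp}u\|_{L^p}$ via dyadic Lebesgue differentiation, establish a good-$\lambda$ inequality through a Calder\'on--Zygmund stopping-time argument on the anisotropic dyadic rectangles, and integrate using the finiteness of $\|{\bf M}_d u\|_{L^{p_0}}$ to absorb. This is exactly how the paper proceeds in \Cref{sec:sharp_maximal} (\Cref{thm:good_lambda} and \Cref{thm:Md}), with the details of the good-$\lambda$ step deferred to Grafakos modulo the indicated substitutions.

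There is, however, a genuine wrinkle you have assumed away. The anisotropic dyadic rectangles of \Cref{sec:anisorect} do \emph{not} enjoy the nesting property you list as a requirement: two rectangles of the same generation may overlap without either containing the other, up to $2^n$ of them can share a common point (property~(iv) and the remark following the list), and predecessors are not unique. Consequently the maximal rectangles $\{Q_j\}$ covering $\Omega_\lambda=\{{\bf M}_d u>\lambda\}$ are \emph{not} pairwise disjoint as you write; the paper replaces disjointness by the bounded-overlap estimate $\sum_j|Q_j|\le 2^n|\Omega_\lambda|$, derived from properties~(iv)--(v). Likewise, your step ``any dyadic witness $Q\ni x$ with $(|u|)_Q>2c_d\lambda$ must lie inside $Q_j$ by maximality'' leans on nesting: maximality of $Q_j$ only forces the witness to have generation at least that of $Q_j$, not to be contained in $Q_j$ itself. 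The Fefferman--Stein scheme does survive with these weaker combinatorics (this is precisely what the paper asserts, localising to a predecessor $Q_j'$ and invoking the weak $(1,1)$ bound for ${\bf M}_d$), but that adaptation is where the actual work sits, and your sketch presumes the stronger Euclidean dyadic structure rather than engaging with it.
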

We are now in a position to prove a robust Sobolev-type inequality by using \Cref{lem:cv}, \Cref{thm:maximal}, and \Cref{thm:sharp_maximal}.

\begin{theorem}\label{thm:sobolev}
Let $s_1,\dots,s_n\in[s_0,1)$ be given for some $s_0\in(0,1)$. Suppose that $1 < p < n/\bar{s}$ and let $p_{\star} = np/(n-p\bar{s})$. Then, there is a constant $C = C(n, p, p_{\star}, s_0) > 0$ such that
for every $u \in V^{p,\mu_\mathrm{axes}}(\mathbb{R}^n | \mathbb{R}^n)$
\begin{equation} \label{eq:sobolev}
\|u\|_{L^{p_{\star}}(\mathbb{R}^n)}^p \leq C \int_{\mathbb{R}^n} \int_{\mathbb{R}^n} |u(x)-u(y)|^p \mu_{\mathrm{axes}}(x, \mathrm{d}y) \mathrm{d}x.
\end{equation}
\end{theorem}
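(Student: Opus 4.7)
The strategy is to combine the three tools just established: \Cref{thm:sharp_maximal} to reduce the $L^{p_\star}$ control of $u$ to that of its sharp maximal function, an anisotropic Poincar\'e-type oscillation bound on rectangles to estimate $\mathbf{M}^\sharp u$ pointwise, and \Cref{lem:cv} together with \Cref{thm:maximal} to integrate the resulting expression while keeping every constant robust in $s_1,\ldots,s_n$. After a standard density argument allowing one to assume $u\in C_c^\infty(\mathbb{R}^n)$ (so that $\mathbf{M}_d u\in L^{p_0}$ for some $p_0>0$), \Cref{thm:sharp_maximal} applied with exponent $p_\star$ reduces the task to bounding $\|\mathbf{M}^\sharp u\|_{L^{p_\star}(\mathbb{R}^n)}$ by a power of the right-hand side of \eqref{eq:sobolev}.

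For the pointwise bound, I start from $\fint_{M_\rho(x)}|u(y)-(u)_{M_\rho(x)}|\,\mathrm{d}y \leq \fint_{M_\rho(x)}\fint_{M_\rho(x)}|u(y)-u(z)|\,\mathrm{d}y\,\mathrm{d}z$ and telescope $u(y)-u(z)=\sum_{k=1}^n[u(w^{(k-1)})-u(w^{(k)})]$ along the axis-aligned intermediate points $w^{(k)}=(z_1,\ldots,z_k,y_{k+1},\ldots,y_n)$, so that each term depends on a single one-dimensional increment $z_k-y_k$. A change of variables $(y,z)\mapsto(\eta,h,\beta,\gamma)$ with $\eta=w^{(k-1)}$ and $h=z_k-y_k$, followed by H\"older's inequality in $h$ against the weight $|h|^{(1+ps_k)/(p-1)}$ (whose integral over $|h|<2\rho^{s_{\max}/s_k}$ has $s_k$-robust normalisation for $s_k\in[s_0,1)$), yields
\begin{equation*}
\mathbf{M}^\sharp u(x) \leq C \sum_{k=1}^n \sup_{\rho>0}\,\rho^{s_{\max}} \left(\fint_{M_\rho(x)} \int_{|h|<2\rho^{s_{\max}/s_k}} \frac{|u(\eta+he_k)-u(\eta)|^p}{|h|^{1+ps_k}}\,\mathrm{d}h\,\mathrm{d}\eta\right)^{1/p},
\end{equation*}
with $C=C(n,p,s_0)$.

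To integrate this estimate in $L^{p_\star}$, I raise it to the $p_\star$-th power and dyadically decompose the $h$-range into shells $[2^{-j}\rho^{s_{\max}/s_k},2^{1-j}\rho^{s_{\max}/s_k})$. For each shell, \Cref{lem:cv} with $b=s_{\max}/s_k$, $a=2^{-j}$, and $N=2$ converts the $\sup_\rho$ into a genuine fractional-seminorm density with the $s_k$-robust factor $N^{p(1-s_k)}\leq 2^p$, while \Cref{thm:maximal} (applied in the exponent $p_\star/p>1$, which is strictly greater than $1$ precisely because $p<n/\bar s$) handles the $\fint_{M_\rho(x)}$-averaging in the $\eta$-variable. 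The geometric sum $\sum_j 2^{-j(1+ps_k)}$ converges uniformly for $s_k\in[s_0,1)$, and the $s_k(1-s_k)$ weights intrinsic to $\mu_{\mathrm{axes}}$ are exactly what is needed to absorb the remaining endpoint divergences as $s_k\nearrow 1$. The main obstacle is precisely this $s_k$-bookkeeping: every constant introduced by H\"older, by \Cref{lem:cv}, by the dyadic sum, and by the maximal function theorem must be tracked so that the final constant depends only on $n$, $p$, $p_\star$, and $s_0$, which is indispensable for the robustness claim and for the subsequent applications of the theorem.
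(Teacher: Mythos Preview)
Your telescoping and the pointwise oscillation estimate are correct, but the integration step has a genuine gap. After your H\"older-in-$h$ application you arrive at
\[
\mathbf{M}^\sharp u(x)\le C\sum_k \sup_{\rho>0}\rho^{s_{\max}}\Bigl(\fint_{M_\rho(x)} G_k(\eta,\rho)\,\mathrm{d}\eta\Bigr)^{1/p},
\qquad G_k(\eta,\rho)=\int_{|h|<2\rho^{s_{\max}/s_k}}\frac{|u(\eta+he_k)-u(\eta)|^p}{|h|^{1+ps_k}}\,\mathrm{d}h,
\]
and this is \emph{not} a maximal function of a fixed integrand: $G_k$ depends on $\rho$ through the truncation, while the prefactor $\rho^{s_{\max}}$ also depends on $\rho$. \Cref{thm:maximal} controls $\sup_\rho\fint_{M_\rho(x)}f$ for a fixed $f$; it says nothing about $\sup_\rho\rho^{s_{\max}}\fint_{M_\rho(x)}G_k(\cdot,\rho)$. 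If you try to decouple by bounding $G_k(\eta,\rho)\le G_k(\eta,\infty)$ you are left with $\sup_\rho\rho^{s_{\max}}=\infty$; if instead you try to bound $\rho^{ps_{\max}}$ using $|h|\ge 2^{-j}\rho^{s_{\max}/s_k}$ on the $j$-th shell, the resulting coefficient grows like $2^{jps_k}$ and the shell sum diverges (there is no ``geometric sum $\sum_j 2^{-j(1+ps_k)}$'' in this direction). \Cref{lem:cv} does not help either: it is designed for $\int_{\mathbb{R}^n}\sup_\rho[\cdots]\,\mathrm{d}x$ with a pointwise integrand in $x$, not for a $\sup_\rho$ sitting over an $M_\rho(x)$-average.

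A second, independent problem is the robustness factor. Your use of \Cref{lem:cv} with fixed $N=2$ yields only the bounded factor $2^{p(1-s_k)}$; it cannot manufacture the $(1-s_k)$ that is needed to match the weight $s_k(1-s_k)$ in $\mu_{\mathrm{axes}}$. The paper avoids both issues by \emph{not} applying H\"older in $h$ at the pointwise stage. It keeps the naked average, sets $F_k(w)=\sup_\rho 2\rho^{-s_{\max}}\fint_{|h|<2\rho^{s_{\max}/s_k}}|u(w)-u(w+he_k)|\,\mathrm{d}h$, so that $\fint_{M_\rho(x)}|u-(u)_{M_\rho(x)}|\le\sum_k\rho^{s_{\max}}\fint_{M_\rho(x)}F_k$; then a H\"older splitting $(\fint F_k)^{p_\star}\le |M_\rho|^{-(p_\star-p)/p}\|F_k\|_{L^p}^{p_\star-p}(\fint F_k)^p$ makes $\rho^{p_\star s_{\max}}|M_\rho|^{-(p_\star-p)/p}$ cancel exactly, leaving $({\bf M}F_k(x))^p$ times a constant, so \Cref{thm:maximal} applies at exponent $p$. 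The factor $(1-s_k)$ is then extracted from $\|F_k\|_{L^p}^p$ by applying \Cref{lem:cv} with $N=2^j$ for all $j$ and choosing the weights $\beta_{j,i}=p(\log 2)(1-s_k)2^{-p(1-s_k)j}$, which satisfy $\sum_j\beta_{j,i}\ge 1$ and carry the needed $(1-s_k)$.
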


\begin{proof}
This proof is based on the technique of \cite{NDN20}, which uses the maximal and sharp maximal inequalities. Note that by definition of $V^{p,\mu_\mathrm{axes}}(\mathbb{R}^n | \mathbb{R}^n)$ and Hölder's inequality,  $V^{p,\mu_\mathrm{axes}}(\mathbb{R}^n | \mathbb{R}^n)\subset L^p(\R^n) \subset  L^p_{\mathrm{loc}}(\mathbb{R}^n) \subset  L^1_{\mathrm{loc}}(\mathbb{R}^n)$. 
Hence, the maximal and sharp maximal functions are well defined for every function $u \in V^{p,\mu_\mathrm{axes}}(\mathbb{R}^n | \mathbb{R}^n)$.
For $x \in \mathbb{R}^n$ and $\rho > 0$, we have
\begin{equation} \label{eq:avg}
\fint_{M_\rho(x)} |u(y) - (u)_{M_\rho(x)}| \,\mathrm{d}y \leq \fint_{M_\rho(x)} \fint_{M_\rho(x)} |u(y) - u(z)| \,\mathrm{d}z \,\mathrm{d}y.
\end{equation}
Let us consider as in \cite[Lemma 2.1]{CK20} a polygonal chain $\ell = (\ell_0(y,z), \cdots, \ell_n(y,z)) \in \mathbb{R}^{n(n+1)}$ connecting $y$ and $z$ with
\begin{equation*}
\ell_k(y,z) = (l_1^k, \cdots, l_n^k), \quad\text{where}~
l_j^k =
\begin{cases}
z_j, &\text{if}~ j \leq k, \\
y_j, &\text{if}~ j > k,
\end{cases}
\end{equation*}
then $y=\ell_0(y,z)$, $z = \ell_n(y,z)$, and $|\ell_{k-1}(y,z) - \ell_k(y,z)| = |y_k-z_k|$ for all $k=1,\cdots, n$. By the triangle inequality, we have
\begin{equation} \label{eq:tri_ineq}
\begin{split}
&\fint_{M_\rho(x)} \fint_{M_\rho(x)} |u(y) - u(z)| \,\mathrm{d}z \,\mathrm{d}y \\
&\leq \sum_{k=1}^n \fint_{M_\rho(x)} \fint_{M_\rho(x)} |u(\ell_{k-1}(y,z)) - u(\ell_k(y,z))| \,\mathrm{d}z\,\mathrm{d}y.
\end{split}
\end{equation}
For a fixed $k$, we set $w=\ell_{k-1}(y,z) = (z_1,\cdots, z_{k-1}, y_k, \cdots, y_n)$ and $v = y+z-w = (y_1,\cdots, y_{k-1}, z_k, \cdots, z_n)$, then $\ell_k(y,z) = w+e_k(v_k-w_k)$. By Fubini's theorem, we obtain
\begin{equation} \label{eq:Fubini}
\begin{split}
&\fint_{M_\rho(x)} \fint_{M_\rho(x)} |u(\ell_{k-1}(y,z)) - u(\ell_k(y,z))| \,\mathrm{d}z\,\mathrm{d}y \\
&\leq \fint_{M_{\rho}(x)} \fint_{x_k-\rho^{s_{\max}/s_k}}^{x_k+\rho^{s_{\max}/s_k}} |u(w) - u(w+e_k(v_k-w_k))| \,\mathrm{d}v_k\,\mathrm{d}w.
\end{split}
\end{equation}
Moreover, using the inequality $|v_k-w_k| \leq |v_k-x_k| + |w_k-x_k| < 2\rho^{s_{\max}/s_k}$, we make the inner integral in the right-hand side of \eqref{eq:Fubini} independent of $x$. Namely, we have
\begin{equation} \label{eq:x_indep}
\begin{split}
&\fint_{x_k-\rho^{s_{\max}/s_k}}^{x_k+\rho^{s_{\max}/s_k}} |u(w) - u(w+e_k(v_k-w_k))| \,\mathrm{d}v_k \\
&\leq 2 \fint_{w_k-2\rho^{s_{\max}/s_k}}^{w_k+2\rho^{s_{\max}/s_k}} |u(w) - u(w+e_k(v_k-w_k))| \,\mathrm{d}v_k \\
&= 2 \fint_{-2\rho^{s_{\max}/s_k}}^{2\rho^{s_{\max}/s_k}} |u(w) - u(w+he_k)| \,\mathrm{d}h.
\end{split}
\end{equation}
Combining \eqref{eq:avg}, \eqref{eq:tri_ineq}, \eqref{eq:Fubini}, and \eqref{eq:x_indep}, we arrive at
\begin{equation} \label{eq:F_k}
\fint_{M_\rho(x)} |u(y) - (u)_{M_\rho(x)}| \,\mathrm{d}y \leq \sum_{k=1}^n \rho^{s_{\max}} \fint_{M_{\rho}(x)} F_k(w) \,\mathrm{d}w,
\end{equation}
where the function $F_k$ is defined by
\begin{equation*}
F_k(w) := \sup_{\rho > 0} \left( 2\rho^{-s_{\max}} \fint_{-2\rho^{s_{\max}/s_k}}^{2\rho^{s_{\max}/s_k}} |u(w) - u(w+he_k)| \,\mathrm{d}h \right).
\end{equation*}
By H\"older's inequality,
\begin{equation} \label{eq:F_k_Holder}
\begin{split}
\left( \fint_{M_\rho(x)} F_k(w) \,\mathrm{d}w \right)^{p_{\star}} 
&\leq \left( \fint_{M_{\rho}(x)} F_k^p(w) \,\mathrm{d}w \right)^{\frac{{p_{\star}}-p}{p}} \left( \fint_{M_{\rho}(x)} F_k(w) \,\mathrm{d}w \right)^p \\
&\leq |M_{\rho}|^{-\frac{{p_{\star}}-p}{p}} \|F_k\|_{L^p(\mathbb{R}^n)}^{{p_{\star}}-p} \left( \fint_{M_{\rho}(x)} F_k(w) \,\mathrm{d}w \right)^p.
\end{split}
\end{equation}
Thus, it follows from \eqref{eq:F_k} and \eqref{eq:F_k_Holder} that
\begin{equation*}
\begin{split}
\Bigg( \fint_{M_\rho(x)} |u(y) - (u)_{M_\rho(x)}|&\,\mathrm{d}y \Bigg)^{p_{\star}} 
\leq n^{{p_{\star}}-1} \sum_{k=1}^n \rho^{{p_{\star}}s_{\max}} \left( \fint_{M_{\rho}(x)} F_k(w) \,\mathrm{d}w \right)^{p_{\star}} \\
&\leq n^{{p_{\star}}-1} \sum_{k=1}^n \rho^{{p_{\star}}s_{\max}} |M_{\rho}|^{-\frac{{p_{\star}}-p}{p}} \|F_k\|_{L^p(\mathbb{R}^n)}^{{p_{\star}}-p} \left( \fint_{M_{\rho}(x)} F_k(w) \,\mathrm{d}w \right)^p \\
&\leq \frac{n^{{p_{\star}}-1}}{2^{n\frac{{p_{\star}}-p}{p}}} \sum_{k=1}^n \|F_k\|_{L^p(\mathbb{R}^n)}^{{p_{\star}}-p} \left( \fint_{M_{\rho}(x)} F_k(w) \,\mathrm{d}w \right)^p.
\end{split}
\end{equation*}
Taking the supremum over $\rho > 0$, we obtain
\begin{equation} \label{eq:sharp-maximal-upper}
\left( {\bf M}^\sharp u(x) \right)^{p_{\star}} \leq \frac{n^{{p_{\star}}-1}}{2^{n\frac{{p_{\star}}-p}{p}}} \sum_{k=1}^n \|F_k\|_{L^p(\mathbb{R}^n)}^{{p_{\star}}-p} ({\bf M}F_k(x))^p.
\end{equation}
We now use \Cref{thm:maximal} and \Cref{thm:sharp_maximal}. By \eqref{eq:MdM} and \Cref{thm:maximal}, we know that ${\bf M}_d u \in L^p(\mathbb{R}^n)$. Thus, \Cref{thm:sharp_maximal} yields that
\begin{equation*}
\|u\|_{L^{p_{\star}}(\mathbb{R}^n)}^{p_{\star}} \leq C \|{\bf M}^\sharp u\|_{L^{p_{\star}}(\mathbb{R}^n)}^{p_{\star}}
\end{equation*}
for some $C = C(n, p_{\star}, s_0) > 0$. Moreover, assuming $F_k \in L^p(\mathbb{R}^n)$, we have by \Cref{thm:maximal} and \eqref{eq:sharp-maximal-upper}
\begin{equation*}
\|{\bf M}^\sharp u\|_{L^{p_{\star}}(\mathbb{R}^n)}^{p_{\star}} \leq C \sum_{k=1}^n \|F_k\|_{L^p(\mathbb{R}^n)}^{{p_{\star}}-p} \|{\bf M}F_k\|_p^p \leq C \sum_{k=1}^n \|F_k\|_{L^p(\mathbb{R}^n)}^{p_{\star}}
\end{equation*}
for some $C = C(n, p, p_{\star}, s_0) > 0$. Therefore, it only remains to show that
\begin{equation} \label{eq:claim}
\|F_k\|_{L^p(\mathbb{R}^n)}^p \leq C s_k(1-s_k) \int_{\mathbb{R}^n} \int_{\mathbb{R}} \frac{|u(x)-u(x+he_k)|^p}{|h|^{1+ps_k}} \,\mathrm{d}h \,\mathrm{d}x
\end{equation}
for each $k=1,\cdots, n$.

Let us fix $k$. Using H\"older's inequality, we have
\begin{equation*}
\begin{split}
\|F_k\|_{L^p(\mathbb{R}^n)}^p
&\leq \int_{\mathbb{R}^n} \sup_{\rho > 0} \frac{2^p}{\rho^{ps_{\max}}} \fint_{-2\rho^{s_{\max}/s_k}}^{2\rho^{s_{\max}/s_k}} |u(x) - u(x+he_k)|^p \,\mathrm{d}h \,\mathrm{d}x \\
&= \sum_{i=0}^\infty \int_{\mathbb{R}^n} \sup_{\rho > 0} \frac{2^{p-2}}{\rho^{(1+ps_k)s_{\max}/s_k}} \int_{\mathbb{R}} |u(x) - u(x+he_k)|^p {\bf 1}_{I_i}(|h|) \,\mathrm{d}h \,\mathrm{d}x,
\end{split}
\end{equation*}
where $I_i = [2^{-i}\rho^{s_{\max}/s_k}, 2^{-i+1}\rho^{s_{\max}/s_k})$. For each $i$, let $\lbrace \beta_{j,i} \rbrace_{j=0}^\infty$ be a sequence such that $\sum_j \beta_{j,i} \geq 1$, which will be chosen later. Then,
\begin{equation*}
\|F_k\|_{L^p(\mathbb{R}^n)}^p \leq \sum_{i,j=0}^\infty \beta_{j,i} \int_{\mathbb{R}^n} \sup_{\rho > 0} \frac{2^{p-2}}{\rho^{(1+ps_k)s_{\max}/s_k}} \int_{\mathbb{R}} |u(x) - u(x+he_k)|^p {\bf 1}_{I_i}(|h|) \,\mathrm{d}h \,\mathrm{d}x.
\end{equation*}
By \Cref{lem:cv} for $N = 2^j$, $a = 2^{-i} \in (0,1]$ and $b=s_{\max}/s_k$, we obtain
\begin{equation*}
\|F_k\|_{L^p(\mathbb{R}^n)}^p \leq \sum_{i,j=0}^\infty 2^{p-2+(1+ps_k)(1-i)+p(1-s_k)j} \beta_{j,i} \int_{\mathbb{R}^n} \int_{\mathbb{R}} \frac{|u(x) - u(x+he_k)|^p}{|h|^{1+ps_k}} {\bf 1}_{I_{i+j}} (|h|) \,\mathrm{d}h \,\mathrm{d}x.
\end{equation*}
We rearrange the double sums to have
\begin{equation*}
\begin{split}
&\|F_k\|_{L^p(\mathbb{R}^n)}^p \\
&\leq \sum_{i=0}^\infty \sum_{j=0}^i 2^{p-2+(1+ps_k)(1-i+j)+p(1-s_k)j} \beta_{j,i-j} \int_{\mathbb{R}^n} \int_{\mathbb{R}} \frac{|u(x) - u(x+he_k)|^p}{|h|^{1+ps_k}} {\bf 1}_{I_i}(|h|) \,\mathrm{d}h \,\mathrm{d}x.
\end{split}
\end{equation*}
Let $\beta_{j,i} = p(\log 2)(1-s_k) 2^{-p(1-s_k)j}$, then
\begin{equation*}
1 \leq \sum_{j=0}^\infty \beta_{j,i} = \frac{p(\log 2)(1-s_k)}{1-2^{-p(1-s_k)}} \leq 2p < +\infty.
\end{equation*}
Since
\begin{equation*}
\begin{split}
\sum_{j=0}^i 2^{p-2+(1+ps_k)(1-i+j)+p(1-s_k)j} \beta_{j,i-j} 
&= p(\log 2)(1-s_k) 2^{p-2+(1+ps_k)(1-i)} \sum_{j=0}^i 2^{(1+ps_k)j} \\
&= p(\log 2)(1-s_k) 2^{p-2+(1+ps_k)(1-i)} \frac{2^{(1+ps_k)(i+1)}}{2^{1+ps_k}-1} \\
&\leq p(1-s_k) 2^{p(1+s_k)},
\end{split}
\end{equation*}
we arrive at
\begin{equation*}
\begin{split}
\|F_k\|_{L^p(\mathbb{R}^n)}^p
&\leq \sum_{i=0}^\infty p(1-s_k)2^{p(1+s_k)} \int_{\mathbb{R}^n} \int_{\mathbb{R}} \frac{|u(x) - u(x+he_k)|^p}{|h|^{1+ps_k}} {\bf 1}_{I_i}(|h|) \,\mathrm{d}h \,\mathrm{d}x \\
&\leq p2^{2p}\frac{s_k}{s_0}(1-s_k) \int_{\mathbb{R}^n} \int_{\mathbb{R}} \frac{|u(x) - u(x+he_k)|^p}{|h|^{1+ps_k}} \,\mathrm{d}h \,\mathrm{d}x,
\end{split}
\end{equation*}
which proves \eqref{eq:claim}.
\end{proof}

Next, we can make use of appropriate cut-off functions to prove a localized version of the foregoing Sobolev-type inequality.

\begin{corollary} \label{thm:sobolev_loc}
Let $\Lambda \ge 1$ and $s_1,\dots,s_n\in[s_0,1)$ be given for some $s_0\in(0,1)$. Suppose that $1 < p < n/\bar{s}$. There is $C = C(n,p,p_{\star},s_0,\Lambda) > 0$ such that for each $\mu \in \cK(p,s_0,\Lambda)$ and every $x_0 \in \R^n$, $r \in (0,1]$, $\lambda \in (1,2]$ and $u \in H^{p,\mu}_{{M_{\lambda r}(x_0)}}(\R^n)$ it holds
\begin{equation*}
\begin{split}
\|u\|_{L^{{p_{\star}}}(M_r(x_0))}^p\
\leq& C \int_{M_{\lambda r}(x_0)} \int_{M_{\lambda r}(x_0)} |u(x)-u(y)|^p \mu(x, \mathrm{d}y) \mathrm{d}x \\
&+ C \left( \sum_{k=1}^n \left( \lambda^{s_{\max}/s_k}-1 \right)^{-s_k p} \right) r^{-ps_{\max}} \|u\|_{L^p(M_{\lambda r}(x_0))}^p,
\end{split}
\end{equation*}
where ${p_{\star}}$ is defined as in \Cref{thm:sobolev}.
\end{corollary}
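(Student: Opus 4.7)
The plan is to deduce the localized inequality from the global Sobolev inequality \Cref{thm:sobolev} by multiplying $u$ against an admissible cut-off. Pick $\tau$ from \Cref{def:cut-off} associated with $x_0$, $r$, and $\lambda$, and set $w = u\tau$. Since $u$ vanishes outside $M_{\lambda r}(x_0)$ by hypothesis, so does $w$, and $w \equiv u$ on $M_r(x_0)$. Applying \Cref{thm:sobolev} to $w$ gives
\begin{equation*}
\|u\|_{L^{p_{\star}}(M_r(x_0))}^p \le \|w\|_{L^{p_{\star}}(\R^n)}^p \le C \int_{\R^n}\int_{\R^n} |w(x)-w(y)|^p \ma(x,\d y)\,\d x.
\end{equation*}
The discrete Leibniz identity $w(x)-w(y) = \tau(x)(u(x)-u(y)) + u(y)(\tau(x)-\tau(y))$ together with $|a+b|^p \le 2^{p-1}(|a|^p+|b|^p)$ splits the right-hand side into a $\tau$-weighted $u$-difference piece and a $u$-weighted $\tau$-difference piece, which will be handled separately.

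For the first piece $\int\int |\tau(x)|^p |u(x)-u(y)|^p \ma(x,\d y)\,\d x$, the vanishing of $\tau$ outside $M_{\lambda r}(x_0)$ restricts $x$ to $M_{\lambda r}(x_0)$, and we split the $y$-integration at the boundary of this rectangle. The local contribution is bounded using $|\tau|^p \le 1$ and \Cref{assumption:comparability} by $\Lambda \cE^{\mu}_{M_{\lambda r}(x_0)}(u,u)$. On the tail, the hypothesis $u \in H^{p,\mu}_{M_{\lambda r}(x_0)}(\R^n)$ gives $u(y)=0$ for $y \notin M_{\lambda r}(x_0)$, so $|u(x)-u(y)|^p = |u(x)|^p$, and \Cref{cor:quadrat} applied to $\ma$ (which lies in $\cK(p,s_0,\Lambda)$) delivers $C(\sum_{k=1}^n (\lambda^{s_{\max}/s_k}-1)^{-ps_k}) r^{-ps_{\max}} \|u\|_{L^p(M_{\lambda r}(x_0))}^p$. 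For the second piece $\int\int |u(y)|^p |\tau(x)-\tau(y)|^p \ma(x,\d y)\,\d x$, the symmetry of the product measure $\ma(x,\d y)\,\d x$ from \Cref{assumption:symmetry}, combined with the symmetry of $|\tau(x)-\tau(y)|^p$ in $(x,y)$, lets us replace $|u(y)|^p$ by $|u(x)|^p$; the vanishing of $u$ outside $M_{\lambda r}(x_0)$ then restricts the outer integral, and \Cref{lemma:cutoff} bounds $\int|\tau(x)-\tau(y)|^p \ma(x,\d y)$ uniformly in $x$, yielding the same expression.

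The key subtlety is that one cannot circumvent the cut-off by applying \Cref{thm:sobolev} directly to $u$, because the tail weight $\ma(x,\R^n\setminus M_{\lambda r}(x_0))$ is not uniformly bounded for $x \in M_{\lambda r}(x_0)$: it blows up as $x$ approaches the boundary of the rectangle. The cut-off $\tau$ transfers this singular tail into the $|\tau(x)-\tau(y)|^p$ contribution, which the Lipschitz bounds built into \Cref{def:cut-off} control uniformly, producing the explicit constant $\sum_{k=1}^n(\lambda^{s_{\max}/s_k}-1)^{-ps_k}$. The global membership $w \in V^{p,\ma}(\R^n|\R^n)$ needed to invoke \Cref{thm:sobolev} follows a posteriori from these same bounds and the hypothesis on $u$.
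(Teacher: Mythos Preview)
Your proof is correct and follows essentially the same route as the paper: multiply by an admissible cut-off, apply the global Sobolev inequality \Cref{thm:sobolev} to $u\tau$, expand via a discrete Leibniz rule, and control the resulting pieces with \Cref{assumption:comparability}, \Cref{lemma:cutoff}, and \Cref{cor:quadrat}. The only cosmetic difference is that the paper uses the symmetric Leibniz decomposition $u(x)\tau(x)-u(y)\tau(y)=\tfrac{1}{2}\big[(u(x)-u(y))(\tau(x)+\tau(y))+(u(x)+u(y))(\tau(x)-\tau(y))\big]$ and splits the domain first, whereas you use the asymmetric form and then split; both lead to the same estimates.
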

\begin{proof}
 Let $\tau:\R^n\to\R$ be an admissible cut-off function in the sense of \Cref{def:cut-off}.
For simplicity of notation we write $M_r=M_r(x_0)$.
 
By \Cref{thm:sobolev} there is a constant $c_1=c_1(n,p,p_{\star},s_0)>0$ such that
 \begin{align*}
  \|u\tau\|_{L^{{p_{\star}}}(\R^n)}^p & \leq  c_1 \Bigg(\int_{M_{\lambda 
r}}\int_{M_{\lambda r}} |u(x)\tau(x)-u(x)\tau(y)|^p \, \ma(x,\d y)\, \d x \\
  & \qquad \qquad + 2\int_{M_{\lambda r}}\int_{(M_{\lambda r})^c} 
|u(x)\tau(x)-u(x)\tau(y)|^p \, \ma(x,\d y)\, \d x \Bigg) \\
  & =: c_1 (I_1 + 2I_2).
 \end{align*}
We have
\begin{align*}
 I_1 & \leq \frac{1}{2^{p}} \Bigg(\int_{M_{\lambda r}}\int_{M_{\lambda r}} 
2^{p-1}\left|(u(y)-u(x))(\tau(x)+\tau(y))\right|^p \, \ma(x,\d y)\, \d x\\
 & \qquad \quad + \int_{M_{\lambda r}}\int_{M_{\lambda 
r}}2^{p-1}\left|(u(x)+u(y))(\tau(x)-\tau(y))\right|^p \, \ma(x,\d y)\, \d x\Bigg) \\
 & = \frac12 (J_1+J_2),
\end{align*}
Since $(\tau(x)+\tau(y))\leq 2$ for all $x,y\in M_{\lambda r}$, we get
\[
 J_1 \leq  2^p\int_{M_{\lambda r}}\int_{M_{\lambda r}} |u(y)-u(x)|^p\, \ma(x,\d 
y)\,\d x \leq \Lambda 2^p\int_{M_{\lambda r}}\int_{M_{\lambda r}} |u(y)-u(x)|^p\, \mu(x,\d 
y)\,\d x, \]
where we used \Cref{assumption:comparability} in the second inequality.\\
Moreover, since $|u(x)+u(y)|^p|\tau(x)-\tau(x)|^p \leq 2^{p-1}|u(x)|^p|\tau(x)-\tau(x)|^p + 
2^{p-1}|u(y)|^p|\tau(y)-\tau(x)|^p$, we can again apply \Cref{assumption:comparability} and by \Cref{lemma:cutoff}, we get
\begin{align*} 
J_2 &\leq 2^p \left( \sup_{x\in \R^n} \int_{\R^n} |\tau(y)-\tau(x)|^p\mu(x,\d y) \right)\|u\|^p_{L^p(M_{\lambda r})}
 \\
 &\leq c_2 \left(\sum_{k=1}^n 
(\lambda^{\frac{s_{\max}}{s_k}}-1)^{-ps_k}\right)r^{-ps_{\max}}\|u\|^p_{L^p(M_{\lambda 
r})}\,
\end{align*}
for some $c_2>0$, depending on $n$, $p$, $s_0$ and $\Lambda$.
Moreover, by \Cref{cor:quadrat} there is $c_3=c_3(n,p,\Lambda)>0$ such that
\[ I_2  \leq c_3 \left(\sum_{k=1}^n 
(\lambda^{\frac{s_{\max}}{s_k}}-1)^{-ps_k}\right) r^{-ps_{\max}} \|u\|_{L^p(M_{\lambda r})}^p  \]
Combining these estimates, we find a constant $C=C(n,p,p_{\star},s_0,\Lambda)>0$ such that
\begin{align*}
  \|u\|_{L^{{p_{\star}}}(M_r)}^p &\leq \|u\tau\|_{L^{{p_{\star}}}(\R^n)}^p \\
 & \leq C \Bigg(\int_{M_{\lambda r}}\int_{M_{\lambda r}} |u(x)-u(y)|^p \, 
\ma(x,\d y)\, \, \d x\\
&\qquad \qquad \qquad \qquad \qquad   +\left(\sum_{k=1}^n 
(\lambda^{\frac{s_{\max}}{s_k}}-1)^{-ps_k}\right)r^{-ps_{\max}} \|u\|^p_{L^p(M_{\lambda r})}\Bigg). 
\end{align*}
\end{proof}
Applying the same method as in the proof of the Sobolev-type inequality \Cref{thm:sobolev}, we can deduce a Poincar\'e inequality.
\begin{theorem}\label{thm:poincare}
Let $p> 1$, $\Lambda \ge 1$, and $s_1,\dots,s_n\in[s_0,1)$ be given for some $s_0\in(0,1)$. There is $C = C(n,p,s_0,\Lambda) > 0$ such that for each $\mu \in \cK(p,s_0,\Lambda)$ and every $x_0 \in \R^n$, $r \in (0,1]$ and $u \in L^p(M_{r}(x_0))$, 
\begin{equation*}
\|u - (u)_{M_r(x_0)}\|_{L^p(M_r(x_0))}^p \leq C r^{ps_{\max}} \mathcal{E}_{M_r(x_0)}^\mu(u,u).
\end{equation*}
\end{theorem}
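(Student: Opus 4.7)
The strategy is to mirror the proof of \Cref{thm:sobolev}, but at the fixed scale $\rho=r$ instead of through the sharp maximal function, since we now only need to control one specific average rather than a supremum.

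First I would apply Jensen's inequality to obtain
\[
\int_{M_r(x_0)} |u-(u)_{M_r(x_0)}|^p\,\mathrm{d}y \leq \frac{1}{|M_r(x_0)|}\int_{M_r(x_0)}\int_{M_r(x_0)} |u(y)-u(z)|^p\,\mathrm{d}z\,\mathrm{d}y.
\]
Using the polygonal chain $\ell_0(y,z),\dots,\ell_n(y,z)$ and Fubini's theorem exactly as in the proof of \Cref{thm:sobolev}, this can be controlled by
\[
\frac{n^{p-1}}{2}\sum_{k=1}^n r^{-s_{\max}/s_k}\int_{M_r(x_0)}\int_{\{h\,:\,w+he_k\in M_r(x_0)\}} |u(w)-u(w+he_k)|^p\,\mathrm{d}h\,\mathrm{d}w,
\]
where I use that $M_r(x_0)$ is a product of intervals so every vertex of the polygonal chain lies in $M_r(x_0)$ whenever $y,z$ do; in particular $w$ and $w+he_k$ are automatically both in $M_r(x_0)$.

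The heart of the proof is then to bound each of these $k$-th one-dimensional integrals by
\[
C(n,p,s_0)\,r^{s_{\max}/s_k+ps_{\max}}\,\cE^{\ma}_{M_r(x_0)}(u,u),
\]
with a constant independent of $s_k$, so that the $r^{-s_{\max}/s_k}$ prefactors collapse into a single $r^{ps_{\max}}$. To do this I would replay the dyadic/triangle inequality argument underlying \Cref{lem:cv}: decompose $|h|$ into shells $I_i=[2^{-i}r^{s_{\max}/s_k},2^{-i+1}r^{s_{\max}/s_k})$, on each shell split the difference into $N=2^j$ pieces via the triangle inequality, and perform the change of variables $w'=w+(m/N)he_k$, $\tilde h=h/N$.

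The main obstacle I expect is that the horizontal shift $y=x+(j-1)he_k$ used in \Cref{lem:cv} to collapse the inner sum does not respect $M_r(x_0)$, which would yield an estimate in terms of $\cE^{\ma}_{M_{\lambda r}(x_0)}$ rather than $\cE^{\ma}_{M_r(x_0)}$. My remedy is to exploit the convexity of $M_r(x_0)$: whenever $w,w+he_k\in M_r(x_0)$, each intermediate point $w+(m/N)he_k$ also lies in $M_r(x_0)$, so after the change of variables every one of the $N$ summands is upper bounded by the same integral over $\{w'\in M_r(x_0),\ w'+\tilde h e_k\in M_r(x_0)\}$, which provides the factor $N$ that the shift-collapse step produces in the $\R^n$-version. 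The remainder proceeds as at the end of the proof of \Cref{thm:sobolev}: multiply and divide by $|\tilde h|^{1+ps_k}$, introduce the weights $\beta_{j,i}=p(\log 2)(1-s_k)2^{-p(1-s_k)j}$ (which extract the robust factor $(1-s_k)$ needed to balance the $(1-s_k)^{-1}$ hidden in $\int|u-u|^p/|h|^{1+ps_k}\,\mathrm{d}h$ and absorb the blow-up $N^{p(1-s_k)}$), and sum the convergent geometric series in $i$ and $j$. A final application of \Cref{assumption:comparability} replaces $\cE^{\ma}_{M_r(x_0)}(u,u)$ by $\Lambda\,\cE^{\mu}_{M_r(x_0)}(u,u)$, yielding the stated inequality with constant $C(n,p,s_0,\Lambda)$.
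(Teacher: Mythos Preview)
Your proposal is correct and is precisely the approach the paper intends: the paper does not give a detailed argument but states that the proof is analogous to that of \Cref{thm:sobolev} (and to \cite[Theorem 4.2]{CKW19} for $p=2$), which is exactly the adaptation you outline---Jensen, the polygonal-chain/Fubini reduction, the dyadic version of \Cref{lem:cv} with the weights $\beta_{j,i}$, and finally \Cref{assumption:comparability}. Your observation that convexity of $M_r(x_0)$ keeps the intermediate points $w+\tfrac{m}{N}he_k$ inside $M_r(x_0)$, so that the shift $w'=w+(m-1)\tilde h e_k$ still collapses the $N$ summands into a single integral over $\{w'\in M_r(x_0),\,w'+\tilde h e_k\in M_r(x_0)\}$, is the one point that genuinely needs checking beyond the $\R^n$ argument and is handled correctly.
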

The proof is analog to the proof of the Poincar\'e inequality for the case $p=2$, see \cite[Theorem 4.2]{CKW19}.

\section{Weak Harnack inequality}\label{sec:weak}
In this section, we prove \Cref{thm:weak_Harnack}. The proof is based on Moser's iteration technique. We first need to verify a few properties for weak supersolutions to \eqref{eq:PDE}.
\begin{lemma}\label{lemma:log}
Let $\Lambda \ge 1$ and $s_1,\dots,s_n\in[s_0,1)$ be given for some $s_0\in(0,1)$. Let $1<p\leq  n/\bar{s}$, $x_0 \in \R^n$, $r \in (0,1]$, and $\lambda \in (1,2]$. Set $M_r = M_r(x_0)$ and assume $f \in L^{q/(p\bar{s})}(M_{\lambda r})$ for some $q > n$. There is $C = C(n,p,s_0,\Lambda) > 0$ such that for each $\mu \in \mathcal{K}(p, s_0, \Lambda)$ and every $u \in V^{p,\mu}(M_{\lambda r} | \mathbb{R}^n)$ that satisfies
\begin{align*}
&\mathcal{E}^\mu(u, \varphi) \geq (f, \varphi) \quad \text{for any nonnegative}~ \varphi \in H^{p,\mu}_{M_{\lambda r}}(\R^n), \\
&u(x) \geq \epsilon \quad \text{a.e. in}~ M_{\lambda r} ~ \text{for some}~ \epsilon > 0,
\end{align*}
the following holds:
\begin{equation*}
\begin{split}
&\int_{M_r} \int_{M_r} |\log u(y) - \log u(x)|^p \,\mu(x,\d y) \,\d x \\
&\leq C \left( \sum_{k=1}^n \left( \lambda^{s_{\max}/s_k}-1 \right)^{-ps_k} \right) r^{-ps_{\max}} |M_{\lambda r}| \\
&\quad + \epsilon^{1-p} \|f\|_{L^{q/(p\bar{s})}(M_{\lambda r})} |M_{\lambda r}|^{\frac{q-p\bar{s}}{q}} + 2\epsilon^{1-p} |M_{\lambda r}| \sup_{x \in M_{(\lambda+1)r/2}} \int_{\mathbb{R}^n \setminus M_{\lambda r}} u_-^{p-1}(y) \mu(x, \d y).
\end{split}
\end{equation*}
\end{lemma}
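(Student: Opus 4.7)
The approach is to apply Moser's logarithmic test function $\varphi = \tau^p u^{1-p}$ to the supersolution inequality. Here $\tau$ is an admissible cut-off in the sense of \Cref{def:cut-off} with parameters $r$ and $\lambda' = (\lambda+1)/2 \in (1,3/2]$, so $\supp(\tau) \subset M_{(\lambda+1)r/2}(x_0)$, $\tau \equiv 1$ on $M_r(x_0)$, and $\|\partial_k\tau\|_\infty \leq C(s_0)(\lambda^{s_{\max}/s_k}-1)^{-1}r^{-s_{\max}/s_k}$, where the mean-value theorem is used to pass from $(\lambda+1)/2$ to $\lambda$ in the denominator at the expense of an $s_0$-dependent constant. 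Since $u \geq \epsilon > 0$ a.e.\ on $M_{\lambda r}$, the function $\varphi$ is nonnegative, bounded by $\epsilon^{1-p}$, supported in $M_{\lambda r}$, and thus lies in $H^{p,\mu}_{M_{\lambda r}}(\R^n)$. The supersolution inequality then gives $\cE^\mu(u,\varphi) \geq (f,\varphi)$.

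Next I decompose $\cE^\mu(u,\varphi)$ into an interior integral over $\supp(\tau)\times\supp(\tau)$ and an off-diagonal integral (counted twice, by \Cref{assumption:symmetry}). For the interior part, the central tool is a pointwise algebraic inequality, in the spirit of those collected in \Cref{sec:ineq}, of the form
\[
 |a-b|^{p-2}(a-b)\bigl(a^{1-p}\tau_a^p - b^{1-p}\tau_b^p\bigr) \leq -c_1 |\log(b/a)|^p \max(\tau_a,\tau_b)^p + c_2 |\tau_a-\tau_b|^p,
\]
valid for $a,b>0$ and $\tau_a,\tau_b\geq 0$. Applying this pointwise with $a=u(x), b=u(y), \tau_a=\tau(x), \tau_b=\tau(y)$, integrating, and using $\tau\equiv 1$ on $M_r(x_0)$, the interior contribution is bounded above by
\[
 -c_1 \iint_{M_r(x_0)\times M_r(x_0)} |\log u(y)-\log u(x)|^p \mu(x,\d y)\,\d x + c_2 \iint_{\R^n\times\R^n} |\tau(y)-\tau(x)|^p \mu(x,\d y)\,\d x,
\]
and the second integral is estimated via \Cref{lemma:cutoff} by $C\bigl(\sum_k(\lambda^{s_{\max}/s_k}-1)^{-ps_k}\bigr)r^{-ps_{\max}}|M_{\lambda r}|$.

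For the off-diagonal part, $\varphi(y)=0$ for $y \notin \supp(\tau)$, so it reduces to $-\iint_{\supp\tau \times (\supp\tau)^c}|u(y)-u(x)|^{p-2}(u(y)-u(x))\varphi(x)\mu(x,\d y)\,\d x$. On $\{u(y)\geq u(x)\}$ the integrand is nonpositive and is discarded. On $\{u(y)<u(x)\}$ I split by the sign of $u(y)$: when $u(y)\geq 0$, use $(u(x)-u(y))^{p-1}\leq u(x)^{p-1}$, so that $(u(x)-u(y))^{p-1}\varphi(x) \leq \tau^p(x)$; integrated over $y\in(\supp\tau)^c$, this yields another cut-off-type bound which, via \Cref{assumption:tail} and \eqref{assmu1}, is absorbed into the term already obtained from \Cref{lemma:cutoff}. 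When $u(y)<0$, use $(u(x)+u_-(y))^{p-1}\leq 2^{p-1}(u(x)^{p-1}+u_-(y)^{p-1})$ together with $\varphi(x)u_-(y)^{p-1}\leq \epsilon^{1-p}\tau^p(x)u_-(y)^{p-1}$ to obtain precisely the explicit $u_-$-tail term with prefactor $\epsilon^{1-p}|M_{\lambda r}|$. Finally, H\"older's inequality together with $|\varphi|\leq \epsilon^{1-p}\mathbf{1}_{M_{\lambda r}}$ gives
\[ -(f,\varphi) \leq \epsilon^{1-p}\|f\|_{L^{q/(p\bar{s})}(M_{\lambda r})}|M_{\lambda r}|^{(q-p\bar{s})/q}. \]
Combining $-(f,\varphi) \leq -\cE^\mu(u,\varphi)$ with the interior and off-diagonal bounds yields the claim after rearrangement.

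\textbf{Main obstacle.} The crux is the pointwise algebraic inequality used in the interior estimate. For $p=2$ it is an elementary Hilbert-space computation, but for general $p>1$ one needs a careful case analysis separating the regime where $a/b$ is close to $1$ (where a Taylor expansion controls both sides and the log term is sharp) from the regime where it is far from $1$ (where one side grows exponentially while $|\log(a/b)|^p$ grows only polynomially). This is precisely the type of nonlinear algebraic input that \Cref{sec:ineq} is designed to provide, and is the reason the proof differs substantially from the linear $p=2$ treatment in \cite{CK20}.
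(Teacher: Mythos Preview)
Your proposal is correct and follows essentially the same approach as the paper: the test function $\varphi=\tau^p u^{1-p}$ with an admissible cut-off supported in $M_{(\lambda+1)r/2}$, the interior/off-diagonal decomposition, the sign-based splitting of the tail term, and H\"older for $(f,\varphi)$ all match. One minor remark: the paper does not derive the key pointwise inequality in \Cref{sec:ineq} (that appendix treats the exponent $-t$ with $t>p-1$, needed for \Cref{lem:iteration}), but instead cites \cite[Lemma~1.3]{DCKP16} directly for the $t=p-1$ logarithmic case; otherwise your outline and the paper's proof coincide.
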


\begin{proof}
Let $\tau$ be an admissible cut-off function in the sense of \Cref{def:cut-off} and let $\varphi(x) = \tau^p(x)u^{1-p}(x)$, which is well defined since $\supp(\tau) \subset M_{\lambda r}$. Then, we have
\begin{equation} \label{eq:log_I12}
\begin{split}
(f,\varphi)
&\leq \int_{M_{\lambda r}} \int_{M_{\lambda r}} |u(x)-u(y)|^{p-2}(u(x) - u(y)) \left( \frac{\tau^p(x)}{u^{p-1}(x)} - \frac{\tau^p(y)}{u^{p-1}(y)} \right) \mu(x,\d y) \,\d x \\
&\quad + 2\int_{M_{\lambda r}} \int_{\mathbb{R}^n \setminus M_{\lambda r}} |u(x)-u(y)|^{p-2}(u(x) - u(y)) \frac{\tau^p(x)}{u^{p-1}(x)} \mu(x,\d y) \,\d x \\
&=: I_1 + I_2.
\end{split}
\end{equation}
Similar to the proof of \cite[Lemma 1.3]{DCKP16}, we get the inequality
\begin{equation} \label{eq:ineq_log}
\begin{split}
&|u(x)-u(y)|^{p-2} (u(x)-u(y)) \left( \frac{\tau^p(x)}{u^{p-1}(x)} - \frac{\tau^p(y)}{u^{p-1}(y)} \right) \\
&\leq - c_1 |\log u(x) - \log u(y)|^p \tau^p(y) + c_2 |\tau(x)-\tau(y)|^p,
\end{split}
\end{equation}
where $c_1, c_2 >0$ are constants depending only on $p$. Hence, by \eqref{eq:ineq_log} and \Cref{lemma:cutoff},
\begin{equation} \label{eq:log_I1}
\begin{split}
I_1
&\leq -c_1 \int_{M_r} \int_{M_r} |\log u(x) - \log u(y)|^p \mu(x,\d y) \,\d x \\
&\quad + C \left( \sum_{k=1}^n \left( \lambda^{s_{\max}/s_k}-1 \right)^{-ps_k} \right)r^{-ps_{\max}} |M_{\lambda r}|.
\end{split}
\end{equation}
For $I_2$, again by \Cref{lemma:cutoff}
\begin{equation} \label{eq:log_I2}
\begin{split}
I_2
&\leq 2\int_{M_{\lambda r}} \int_{\mathbb{R}^n \setminus M_{\lambda r}} (u(x)-u(y))^{p-1} \frac{\tau^p(x)}{u^{p-1}(x)} {\bf 1}_{\lbrace u(x) \geq u(y) \rbrace} \mu(x,\d y) \,\d x \\
&\leq 2\int_{M_{\lambda r}} \int_{\mathbb{R}^n \setminus M_{\lambda r}} |\tau(x) - \tau(y)|^p \mu(x,\d y) \,\d x + 2\int_{M_{\lambda r}} \int_{\mathbb{R}^n \setminus M_{\lambda r}} u_-^{p-1}(y) \frac{\tau^p(x)}{\epsilon^{p-1}} \mu(x,\d y) \,\d x \\
&\leq C \left( \sum_{k=1}^n \left( \lambda^{s_{\max}/s_k}-1 \right)^{-ps_k} \right)r^{-ps_{\max}} |M_{\lambda r}| + \frac{2|M_{\lambda r}|}{\epsilon^{p-1}} \sup_{x \in M_{(\lambda+1)r/2}} \int_{\mathbb{R}^n \setminus M_{\lambda r}} u_-^{p-1}(y) \mu(x, \d y),
\end{split}
\end{equation}
where we assumed that $\mathrm{supp}(\tau) \subset M_{(\lambda+1)r/2}$. Combining \eqref{eq:log_I12}, \eqref{eq:log_I1}, and \eqref{eq:log_I2}, and using H\"older's inequality, we conclude that
\begin{equation*}
\begin{split}
&\int_{M_r} \int_{M_r} |\log u(y) - \log u(x)|^p \,\mu(x,\d y) \,\d x \\
&\leq C \left( \sum_{k=1}^n \left( \lambda^{s_{\max}/s_k}-1 \right)^{-ps_k} \right) r^{-ps_{\max}} |M_{\lambda r}| \\
&\quad + \epsilon^{1-p} \|f\|_{L^{q/(p\bar{s})}(M_{\lambda r})} |M_{\lambda r}|^{\frac{q-p\bar{s}}{q}} + 2\epsilon^{1-p} |M_{\lambda r}| \sup_{x \in M_{(\lambda+1)/2}} \int_{\mathbb{R}^n \setminus M_{\lambda r}} u_-^{p-1}(y) \mu(x, \d y).
\end{split}
\end{equation*}
\end{proof}

The next theorem is an essential result to prove the weak Harnack inequality.

\begin{theorem}\label{thm:flipsigns}
Let $\Lambda \ge 1$ and $s_1,\dots,s_n\in[s_0,1)$ be given for some $s_0\in(0,1)$. Let $1<p<  n/\bar{s}$, $x_0 \in \R^n$, and $r \in (0,1]$. Set $M_r = M_r(x_0)$ and assume $f \in L^{q/(p\bar{s})}(M_{5r/4})$ for some $q > n$. There are $C=C(n,p,s_0,q,\Lambda)>0$ and $\bar{p}=\bar{p}(n,p,s_0,q,\Lambda) \in (0, 1)$ such that for each $\mu \in \mathcal{K}(p, s_0, \Lambda)$ and every $u \in V^{p,\mu}(M_{5r/4} | \mathbb{R}^n)$ that satisfies
\begin{align*}
&\mathcal{E}^\mu(u, \varphi) \geq (f, \varphi) \quad \text{for any nonnegative}~ \varphi \in H^{p,\mu}_{M_{5r/4}}(\R^n), \\
&u(x) \geq \epsilon \quad \text{a.e. in}~ M_{5r/4},
\end{align*}
for
\begin{equation*}
\epsilon > r^{\delta}\|f\|_{L^{q/(p\bar{s})}(M_{5r/4})}^{\frac{1}{p-1}} + \left( r^{ps_{\max}} \sup_{x \in M_{9r/8}} \int_{\mathbb{R}^n \setminus M_{5r/4}} u_-^{p-1}(y) \mu(x, \d y) \right)^{\frac{1}{p-1}},
\end{equation*}
where $\delta = \frac{ps_{\max}}{p-1} \frac{q-n}{q}$, the following holds:
\begin{equation*}
\left( \fint_{M_r} u^{\bar{p}}(x) \,\mathrm{d}x \right)^{1/\bar{p}} \leq C \left( \fint_{M_r} u^{-\bar{p}}(x) \,\mathrm{d}x \right)^{-1/\bar{p}}.
\end{equation*}
\end{theorem}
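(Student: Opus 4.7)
The plan is to run Moser's classical crossover argument in this anisotropic nonlocal setting: show that $v := \log u$ has bounded $\BMO$ seminorm on $M_r$ (independently of $\mu$), apply a John--Nirenberg inequality on the doubling metric measure space whose balls are the rectangles $M_\rho(x)$, and then factor the averages to compare $L^{\bar p}$- and $L^{-\bar p}$-means of $u$.

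The heart of the argument is the $\BMO$ bound. For any sub-rectangle $M_\rho(x') \subset M_{9r/8}$ with $\rho \le r/8$, so that $M_{2\rho}(x') \subset M_{5r/4}$, the Poincar\'e inequality (\Cref{thm:poincare}) applied on $M_\rho(x')$ bounds the $L^p$-oscillation of $v$ by $C\rho^{ps_{\max}}\cE^{\mu}_{M_\rho(x')}(v,v)$, and \Cref{lemma:log} (applied with $\lambda=2$ on $M_\rho(x')$) controls this energy by three terms: a cutoff contribution of order $\rho^{-ps_{\max}}|M_{2\rho}|$, a source contribution involving $\epsilon^{1-p}\|f\|_{L^{q/(p\bar s)}}$, and a nonlocal tail contribution involving $\epsilon^{1-p}|M_{2\rho}|$. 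Since $u \ge \epsilon > 0$ on $M_{5r/4}$, we have $u_- \equiv 0$ there, so the tail reduces to an integral over $\R^n \setminus M_{5r/4}$; for $x \in M_{3\rho/2}(x') \subset M_{9r/8}$ this is dominated by $\sup_{x \in M_{9r/8}} \int_{\R^n \setminus M_{5r/4}} u_-^{p-1}\mu(x,\d y)$. After dividing by $|M_\rho(x')|$ and using $\rho \le r$, the source and tail error terms reduce respectively to expressions bounded by $\epsilon^{1-p}\|f\|_{L^{q/(p\bar s)}(M_{5r/4})} r^{\delta(p-1)}$ and $\epsilon^{1-p} r^{ps_{\max}}\cdot(\text{tail})$, with $\delta = (ps_{\max}/(p-1))(q-n)/q$ as in the hypothesis. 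The condition on $\epsilon$ is precisely calibrated so that each of these is at most $1$, yielding $\fint_{M_\rho(x')}|v-(v)_{M_\rho(x')}|^p\,\d x \le C$ uniformly; for larger rectangles inside $M_r$ a standard covering argument depending only on the doubling dimension $n/s_0$ extends this bound, so $\|v\|_{\BMO(M_r)} \le C$.

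Next I would invoke the John--Nirenberg inequality on the doubling space $(\R^n, d_M, \d x)$, which is standard and provides $\bar p = \bar p(n,p,s_0,q,\Lambda) \in (0,1)$ together with $C$ such that $\fint_{M_r} e^{\bar p\,|v-(v)_{M_r}|}\,\d x \le C$. To conclude I would factor
\begin{equation*}
\left(\fint_{M_r} u^{\bar p}\,\d x\right)\left(\fint_{M_r} u^{-\bar p}\,\d x\right) = \left(\fint_{M_r} e^{\bar p(v-(v)_{M_r})}\,\d x\right)\left(\fint_{M_r} e^{-\bar p(v-(v)_{M_r})}\,\d x\right) \le C^2,
\end{equation*}
since the shifts $e^{\pm\bar p (v)_{M_r}}$ cancel in the product, and then take $\bar p$-th roots.

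The main obstacle I anticipate is the $\BMO$ step: arranging the enlargement parameter consistently across scales so that $M_{\lambda\rho}(x')\subset M_{5r/4}$ at every scale relevant to John--Nirenberg, and carefully verifying that the exponent $\delta$ in the hypothesis on $\epsilon$ is exactly what is needed to dominate both the source and the tail contributions uniformly in $\rho \in (0,r]$.
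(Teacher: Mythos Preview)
Your proposal is correct and follows essentially the same route as the paper: establish a uniform $\BMO$ bound for $\log u$ on $M_r$ via the Poincar\'e inequality (\Cref{thm:poincare}) combined with the $\log$-energy estimate of \Cref{lemma:log}, then invoke John--Nirenberg and the standard product-factoring trick. The paper in fact only displays the oscillation bound on the single top rectangle $M_r$ (applying \Cref{lemma:log} with $\lambda=5/4$) and declares the rest standard, whereas you spell out the bound on all sub-rectangles $M_\rho(x')$ (using $\lambda=2$) and handle the tail reduction via $u_-\equiv 0$ on $M_{5r/4}$; your version is the more complete of the two.
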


\begin{proof}
We only need to prove that $\log u \in \BMO(M_r)$. The rest of the proof is standard.
The Poincar\'e inequality (see \Cref{thm:poincare}) and \Cref{lemma:log} imply
\begin{equation*}
\begin{split}
&\| \log u - (\log u)_{M_r} \|_{L^p(M_r)}^p \\
&\leq C r^{ps_{\max}} \mathcal{E}^\mu_{M_r} (\log u, \log u) \\
&\leq C \left( \sum_{k=1}^n \left( \left(\frac{5}{4}\right)^{\frac{s_{\max}}{s_k}}-1 \right)^{-s_k p} \right) |M_{5r/4}| + C\epsilon^{1-p} \|f\|_{L^{q/(p\bar{s})}(M_{5r/4})} r^{ps_{\max}} |M_{5r/4}|^{\frac{q-p\bar{s}}{q}} \\
&\quad + C\epsilon^{1-p} r^{ps_{\max}} |M_{5r/4}| \sup_{x \in M_{9r/8}} \int_{\mathbb{R}^n \setminus M_{5r/4}} u_-^{p-1}(y) \mu(x, \d y) \\
&\leq C|M_r|,
\end{split}
\end{equation*}
where we used the bound on $\epsilon$ in the last inequality.
Finally, by H\"older's inequality we obtain
\begin{equation*}
\|\log u\|_{\mathrm{BMO}(M_r)} \leq \left( \fint_{M_r} |\log u - (\log u)_{M_r}|^p \,\d x \right)^{1/p} \leq C,
\end{equation*}
which shows that $\log u \in \mathrm{BMO}(B_r)$. 
\end{proof}

In order to apply Moser's iteration for negative exponents, we prove the following lemma.

\begin{lemma} \label{lem:iteration}
Let $\Lambda \ge 1$ and $s_1,\dots,s_n\in[s_0,1)$ be given for some $s_0\in(0,1)$. Let $1<p<  n/\bar{s},$ $x_0 \in \R^n$, $r\in(0,1]$, and $\lambda \in (1,2]$. Set $M_r = M_r(x_0)$ and assume $f\in L^{q/(p\bar{s})}(M_{\lambda r})$ for some $q>n$. 
For each $\mu \in \mathcal{K}(p,s_0, \Lambda)$ and every $u \in V^{p,\mu}(M_{\lambda r} | \R^n)$ that satisfies 
\begin{align*}
&\mathcal{E}^\mu(u, \varphi) \geq (f, \varphi) \quad \text{for any nonnegative}~ \varphi \in H^{p,\mu}_{M_{\lambda r}}(\R^n), \\
&u(x) \geq \epsilon \quad \text{a.e. in}~ M_{\lambda r},
\end{align*}
for
\begin{equation*}
\epsilon > r^{\delta}\|f\|_{L^{q/(p\bar{s})}(M_{\lambda r/4})}^{\frac{1}{p-1}} + \left( r^{ps_{\max}} \sup_{x \in M_{(\lambda+1)r/2}} \int_{\mathbb{R}^n \setminus M_{\lambda r}} u_-^{p-1}(y) \mu(x, \d y) \right)^{\frac{1}{p-1}},
\end{equation*}
the following is true for any $t > p-1$,
\begin{equation*}
\left\|u^{-1}\right\|_{L^{(t-p+1)\gamma}(M_r)}^{t-p+1} \leq C \left( \sum_{k=1}^n \left( \lambda^{s_{\max}/s_k}-1 \right)^{-s_k p} \right) r^{-s_{\max}p} \left\|u^{-1}\right\|_{L^{t-p+1}(M_{\lambda r})}^{t-p+1},
\end{equation*}
where $\delta = \frac{ps_{\max}}{p-1} \frac{q-n}{q}$, $\gamma = n/(n-p\bar{s})$, and $C = C(n, p, p_{\star}, q, t, s_0, \Lambda) > 0$ is a constant that is bounded when $t$ is bounded away from $p-1$.
\end{lemma}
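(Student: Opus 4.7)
The plan is to carry out one step of a Moser iteration with the test function
$$\varphi = \tau^p u^{-t},$$
where $\tau$ is an admissible cut-off function with $\supp(\tau) \subset M_{\lambda r}(x_0)$ and $\tau \equiv 1$ on $M_r(x_0)$. Since $u \geq \epsilon > 0$ on $M_{\lambda r}$, we have $\varphi \in H^{p,\mu}_{M_{\lambda r}}(\R^n)$ and $\varphi \geq 0$, so the weak supersolution inequality $\cE^{\mu}(u,\varphi) \geq (f,\varphi)$ is available. Setting $\beta = (t-p+1)/p > 0$ and $w = u^{-\beta}$, the goal is to show that after splitting the double integral into the local part on $M_{\lambda r}\times M_{\lambda r}$ and the tail part on $M_{\lambda r}\times(\R^n\setminus M_{\lambda r})$, the local part dominates (a multiple of) the $p$-energy of $w\tau$, which then feeds into the localized Sobolev inequality of \Cref{thm:sobolev_loc}.

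The crucial analytic input for the local part is a pointwise algebraic inequality of the form
$$|a-b|^{p-2}(a-b)\bigl(a^{-t}\tau_1^p - b^{-t}\tau_2^p\bigr) \geq c_1 |a^{-\beta}-b^{-\beta}|^p(\tau_1\wedge\tau_2)^p - c_2\bigl(a^{-\beta p}+b^{-\beta p}\bigr)|\tau_1-\tau_2|^p,$$
valid for $a,b>0$ and $\tau_1,\tau_2\geq 0$, with constants $c_1,c_2>0$ depending on $p$ and $t$ and remaining bounded as long as $t$ is bounded away from $p-1$. This will be supplied by the algebraic inequalities of \Cref{sec:ineq}. After applying it under the double integral on $M_{\lambda r}\times M_{\lambda r}$ and using the standard manipulation
$$|w(x)\tau(x)-w(y)\tau(y)|^p \leq C\bigl(|w(x)-w(y)|^p(\tau(x)^p\vee\tau(y)^p)+(w(x)^p+w(y)^p)|\tau(x)-\tau(y)|^p\bigr),$$
the error term $(w(x)^p+w(y)^p)|\tau(x)-\tau(y)|^p$ is absorbed by \Cref{lemma:cutoff} and produces a contribution of size $C(\sum_k(\lambda^{s_{\max}/s_k}-1)^{-ps_k})r^{-ps_{\max}}\|w\|_{L^p(M_{\lambda r})}^p$.

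For the nonlocal tail, the integrand is bounded using $|u(x)-u(y)|^{p-2}(u(x)-u(y)) \leq u(x)^{p-1} + u_-(y)^{p-1}$; the first summand combined with $\tau(x)^p u(x)^{-t}$ gives a contribution of the cut-off type (handled by \Cref{cor:quadrat}), while the second, times $\epsilon^{-t}$, is controlled by the lower bound on $\epsilon$ that involves $\sup_{x\in M_{(\lambda+1)r/2}}\int_{\R^n\setminus M_{\lambda r}} u_-^{p-1}\mu(x,\d y)$ exactly so as to absorb this term after multiplication by $\epsilon^{-t}=\epsilon^{-(p-1)}\epsilon^{-(t-p+1)}$ into $r^{-ps_{\max}}\|w\|_{L^p(M_{\lambda r})}^p$. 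The source term $(f,\varphi)$ is handled by H\"older's inequality with exponent $q/(p\bar{s})$, and the lower bound $\epsilon \geq r^{\delta}\|f\|_{L^{q/(p\bar{s})}}^{1/(p-1)}$ with $\delta=\frac{ps_{\max}}{p-1}\frac{q-n}{q}$ guarantees, after a short computation using $|M_{\lambda r}|\approx r^{ns_{\max}/\bar s}$, that this contribution is also absorbed. Collecting everything gives
$$\int_{M_{\lambda r}}\int_{M_{\lambda r}} |(w\tau)(x)-(w\tau)(y)|^p\mu(x,\d y)\d x \leq C\Bigl(\sum_{k=1}^n(\lambda^{s_{\max}/s_k}-1)^{-ps_k}\Bigr)r^{-ps_{\max}}\|w\|_{L^p(M_{\lambda r})}^p,$$
and \Cref{thm:sobolev_loc} applied to $w\tau$ upgrades the left-hand side to $\|w\|_{L^{p_\star}(M_r)}^p$. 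Since $w^p = u^{-(t-p+1)}$ and $p_\star/p = \gamma$, this is the stated inequality.

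The main obstacle is establishing the algebraic inequality with explicit, $t$-stable constants, and the bookkeeping that ensures each error term (cut-off, tail, source) is absorbed with a constant that does not blow up as $t\to p-1$; the $p=2$ case in \cite{CKW19, CK20} is simpler because of the Hilbert-space identity $(a-b)(a^{-t}-b^{-t})\asymp(a^{-\beta}-b^{-\beta})^2$, whereas here one must use the quasilinear analogue from \Cref{sec:ineq}.
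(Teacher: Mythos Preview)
Your overall scheme---test with $\varphi=\tau^p u^{-t}$, apply the algebraic inequality of \Cref{lem:alg_ineq} on $M_{\lambda r}\times M_{\lambda r}$, control the tail via the lower bound on $\epsilon$, and finish with \Cref{thm:sobolev_loc}---is exactly the paper's approach. Two points need attention, one of which is a real gap.

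\textbf{The source term does not absorb as you claim.} After H\"older with exponent $q/(p\bar s)$ and the pointwise bound $u^{-t}\le \epsilon^{1-p}w^p$, you obtain
\[
|(f,\varphi)| \le \epsilon^{1-p}\|f\|_{L^{q/(p\bar s)}(M_{\lambda r})}\,\bigl\|\tau w\bigr\|_{L^{pq/(q-p\bar s)}(M_{\lambda r})}^p.
\]
The exponent $pq/(q-p\bar s)$ lies strictly between $p$ and $p_\star$, so this is \emph{not} dominated by $\|w\|_{L^p(M_{\lambda r})}^p$; no ``short computation'' with $|M_{\lambda r}|\approx r^{ns_{\max}/\bar s}$ fixes this, because bounding by $\|\tau w\|_{L^{p_\star}}^p$ via H\"older produces a coefficient that is $O(1)$, not small, under the hypothesis $\epsilon>r^\delta\|f\|^{1/(p-1)}$. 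The paper instead interpolates (Lyapunov) and applies Young's inequality with a free parameter:
\[
\|\tau w\|_{pq/(q-p\bar s)}^p \le \tfrac{n}{q}\,\omega\,\|\tau w\|_{p_\star}^p + \tfrac{q-n}{q}\,\omega^{-n/(q-n)}\|\tau w\|_p^p,
\]
and then chooses $\omega=\varepsilon_0\, r^{ps_{\max}(q-n)/q}$ with $\varepsilon_0$ small. The $\epsilon$-hypothesis turns the prefactor $\epsilon^{1-p}\|f\|$ into $r^{-ps_{\max}(q-n)/q}$, so the first term becomes $\varepsilon_0\|\tau w\|_{p_\star}^p$ and is absorbed into the left-hand side \emph{after} Sobolev has been applied, while the second contributes $C(\varepsilon_0)r^{-ps_{\max}}\|w\|_{L^p}^p$. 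In particular the right order of operations is: apply Sobolev to the local energy first, then split the source via interpolation and absorb.

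\textbf{Tail term.} Your sentence ``times $\epsilon^{-t}$'' is imprecise: bounding $u^{-t}(x)\le\epsilon^{-t}$ uniformly yields $\epsilon^{-(t-p+1)}|M_{\lambda r}|$, which is not controlled by $\|w\|_{L^p}^p$. Use instead $u^{-t}(x)=u^{1-p}(x)\,w^p(x)\le \epsilon^{1-p}w^p(x)$ and keep $w^p(x)$ under the $x$-integral; then the $\epsilon$-hypothesis gives exactly $r^{-ps_{\max}}\|w\|_{L^p(M_{\lambda r})}^p$.
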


To prove \Cref{lem:iteration}, we need the following algebraic inequality.

\begin{lemma} \label{lem:alg_ineq}
Let $a, b > 0$, $\tau_1, \tau_2 \in [0,1]$, and $t > p-1 > 0$. Then,
\begin{equation*}
\begin{split}
&|b-a|^{p-2}(b-a)(\tau_1^p a^{-t} - \tau_2^p b^{-t}) \\
&\geq c_1 \left| \tau_1 a^{\frac{-t+p-1}{p}} - \tau_2 b^{\frac{-t+p-1}{p}} \right|^p - c_2 |\tau_1-\tau_2|^p \left( a^{-t+p-1} + b^{-t+p-1} \right),
\end{split}
\end{equation*}
where $c_i = c_i(p, t) > 0$, $i=1, 2$, is bounded when $t$ is bounded away from $p-1$.
\end{lemma}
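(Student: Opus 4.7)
The plan is to reduce the stated two-variable, two-parameter inequality to a one-variable scalar inequality in $s=b/a$, then re-introduce the $\tau_i$'s via a symmetric decomposition plus Young's inequality. First, the LHS is invariant under the swap $(a,\tau_1)\leftrightarrow(b,\tau_2)$ (the two sign changes from $|b-a|^{p-2}(b-a)$ and from $\tau_1^p a^{-t}-\tau_2^p b^{-t}$ cancel), and the RHS is manifestly symmetric, so I may assume $b\geq a>0$. With $\beta:=(-t+p-1)/p<0$, the heart of the matter is the scalar comparison
\[
(b-a)^{p-1}(a^{-t}-b^{-t})\geq c_*(p,t)(a^\beta-b^\beta)^p,
\]
with $c_*>0$ bounded below whenever $t-p+1$ is bounded away from zero.

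To verify this scalar inequality, set $s=b/a\geq 1$; after multiplying by $a^{p\beta}$ it is equivalent to $\Phi(s):=(s-1)^{p-1}(1-s^{-t})/(1-s^\beta)^p\geq c_*$ on $[1,\infty)$. Both numerator and denominator vanish to order $p$ at $s=1$: Taylor expansion gives $1-s^{-t}\sim t(s-1)$ and $1-s^\beta\sim|\beta|(s-1)$, so $\lim_{s\to 1^+}\Phi(s)=t/|\beta|^p=t\,p^p/(t-p+1)^p$; as $s\to\infty$, the numerator grows like $s^{p-1}$ while the denominator stays bounded, so $\Phi(s)\to+\infty$. Continuity and positivity on $[1,\infty)$ yield the positive minimum $c_*$ with the stated dependence on $t$.

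Next, I would use the symmetric decompositions
\begin{align*}
\tau_1^p a^{-t}-\tau_2^p b^{-t}&=\tfrac{\tau_1^p+\tau_2^p}{2}(a^{-t}-b^{-t})+\tfrac{\tau_1^p-\tau_2^p}{2}(a^{-t}+b^{-t}),\\
\tau_1 a^\beta-\tau_2 b^\beta&=\tfrac{\tau_1+\tau_2}{2}(a^\beta-b^\beta)+\tfrac{\tau_1-\tau_2}{2}(a^\beta+b^\beta).
\end{align*}
Multiplying the first identity by $(b-a)^{p-1}\geq 0$ produces a diagonal term and a cross term. Applying the scalar inequality, the power-mean bound $\tfrac{\tau_1^p+\tau_2^p}{2}\geq(\tfrac{\tau_1+\tau_2}{2})^p$, and the second identity together with $(x+y)^p\leq 2^{p-1}(x^p+y^p)$, the diagonal term is bounded below by a multiple of $|\tau_1 a^\beta-\tau_2 b^\beta|^p$ modulo a correction of the form $-C|\tau_1-\tau_2|^p(a^{p\beta}+b^{p\beta})$, which is exactly the shape of the $c_2$-term on the RHS.

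The cross term is non-negative when $\tau_1\geq\tau_2$ and can be discarded. When $\tau_2>\tau_1$, the elementary bound $\tau_2^p-\tau_1^p\leq p\max(\tau_1,\tau_2)^{p-1}|\tau_1-\tau_2|\leq p|\tau_1-\tau_2|$ (using $\tau_i\in[0,1]$) followed by Young's inequality with exponents $p/(p-1)$ and $p$ splits it into an $\epsilon$-small multiple of the diagonal lower bound (which is absorbed) plus a constant multiple of $|\tau_1-\tau_2|^p(a^{p\beta}+b^{p\beta})$. The \textbf{main obstacle} is choosing the grouping in Young's inequality so that the absorbable piece genuinely fits inside the diagonal bound uniformly in the ratio $b/a$: this uses the two-sided mean value estimate $|\beta|\,b^{\beta-1}(b-a)\leq a^\beta-b^\beta\leq|\beta|\,a^{\beta-1}(b-a)$ (from monotonicity of $x^{\beta-1}$ for $\beta<0$) to convert factors of $(b-a)^{p-1}a^{-t}$ into factors of $(a^\beta-b^\beta)^{p-1}a^\beta$, up to $|\beta|^{-(p-1)}\sim(t-p+1)^{-(p-1)}$ constants. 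It is precisely this power of $|\beta|^{-1}$ that forces the blow-up of $c_1,c_2$ as $t\to(p-1)^+$ asserted in the conclusion.
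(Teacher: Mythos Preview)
Your strategy is sound and close in spirit to the paper's own argument: the paper also isolates the scalar inequality $(b-a)^{p-1}(a^{-t}-b^{-t})\geq c_*(a^\beta-b^\beta)^p$ (proved there by Jensen rather than by your continuity argument), then splits off a cross term in $\tau$, converts powers via a mean-value estimate, and absorbs using Young's inequality. The paper uses the asymmetric split $\tau_1^p(a^{-t}-b^{-t})+(\tau_1^p-\tau_2^p)b^{-t}$ instead of your symmetric one, and it records the constants explicitly; otherwise the mechanisms are parallel.

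There is, however, one genuine gap in your cross-term step. You write $\tau_2^p-\tau_1^p\leq p\max(\tau_1,\tau_2)^{p-1}|\tau_1-\tau_2|\leq p|\tau_1-\tau_2|$ and then feed the \emph{second} bound into Young's inequality. If you drop the factor $\max(\tau_1,\tau_2)^{p-1}$, the absorbable piece from Young becomes $\epsilon\,(a^\beta-b^\beta)^p$ with no $\tau$-weight, whereas your diagonal lower bound carries the factor $\tfrac{\tau_1^p+\tau_2^p}{2}$. When both $\tau_i$ are small this piece cannot be absorbed: take $\tau_1=0$, $\tau_2=\tau\to 0$ with fixed $a<b$ and you will see your lower bound for the LHS tends to a strictly negative constant while the target RHS tends to $0$. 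So the obstacle you flag---uniformity in $b/a$---is real, but the more delicate one is uniformity in the $\tau_i$.

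The fix is exactly what the paper does: retain the factor $\max(\tau_1,\tau_2)^{p-1}$ and group it with $(a^\beta-b^\beta)^{p-1}$ in Young's inequality, so that the absorbable piece is $\epsilon\,\max(\tau_1,\tau_2)^p(a^\beta-b^\beta)^p$. Since $\max(\tau_1,\tau_2)^p\leq \tau_1^p+\tau_2^p$, this is now genuinely dominated by the diagonal term for $\epsilon$ small, uniformly in all parameters. After that, your power-mean bound and the second symmetric identity finish the job exactly as you outlined.
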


Note that \Cref{lem:alg_ineq} is a discrete version of 
\begin{equation*}
|\nabla v|^{p-2} \nabla v \cdot \nabla (-v^{-t}\tau^p) \geq c_1 \left| \nabla \left(v^{\frac{-t+p-1}{p}} \tau \right) \right|^p - c_2 |\nabla \tau|^p v^{-t+p-1}.
\end{equation*}
The proof of \Cref{lem:alg_ineq} is provided in \Cref{sec:ineq}.

\begin{proof} [Proof of \Cref{lem:iteration}]
Let $\tau$ be an admissible cut-off function in the sense of \Cref{def:cut-off}. Since $\tau = 0$ outside $M_{\lambda r}$, the function $\varphi = -\tau^p u^{-t}$ is well defined. Using \Cref{lem:alg_ineq}, we have
\begin{equation*}
\begin{split}
&(f, -\tau^p u^{-t}) \geq \mathcal{E}(u, -\tau^p u^{-t}) \\
&= \int_{M_{\lambda r}} \int_{M_{\lambda r}} |u(y)-u(x)|^{p-2} (u(y)-u(x)) \left( \tau^p(x) u^{-t}(x) - \tau^p(y) u^{-t}(y) \right) \mu(x, \d y)\, \d x \\
&\quad + 2 \int_{M_{\lambda r}} \int_{\mathbb{R}^n \setminus M_{\lambda r}} |u(y)-u(x)|^{p-2} (u(y)-u(x)) \tau^p(x) u^{-t}(x) \mu(x, \d y)\, \d x \\
&\geq c_1 \int_{M_{\lambda r}} \int_{M_{\lambda r}} \left| \tau(x) u^{\frac{-t+p-1}{p}} (x) - \tau(y) u^{\frac{-t+p-1}{p}}(y) \right|^p \mu(x, \d y)\, \d x \\
&\quad - c_2 \int_{M_{\lambda r}} \int_{M_{\lambda r}} |\tau(x)-\tau(y)|^p \left( u^{-t+p-1}(x) + u^{-t+p-1}(y) \right) \mu(x, \d y)\, \d x \\
&\quad - 2 \int_{M_{\lambda r}} \int_{\mathbb{R}^n \setminus M_{\lambda r}} (u(x)-u(y))^{p-1} \tau^p(x) u^{-t}(x) {\bf 1}_{\lbrace u(y) \leq u(x) \rbrace} \mu(x, \d y)\, \d x \\
&=: c_1 I_1 - c_2 I_2 - I_3,
\end{split}
\end{equation*}
where $c_1$ and $c_2$ are constants given in \Cref{lem:alg_ineq}. 
By \Cref{thm:sobolev_loc}, we obtain
\begin{equation*}
\begin{split}
I_1
&= \int_{M_{\lambda r}} \int_{M_{\lambda r}} \left| \tau(x) u^{\frac{-t+p-1}{p}} (x) - \tau(y) u^{\frac{-t+p-1}{p}}(y) \right|^p \mu(x, \d y)\, \d x \\
&\geq C\left\|\tau u^{\frac{-t+p-1}{p}} \right\|_{L^{p_{\star}}(M_{r})}^p -C r^{-ps_{\max}}\left( \sum_{k=1}^n \left( \lambda^{s_{\max}/s_k}-1 \right)^{-s_k p} \right) \left\|\tau u^{\frac{-t+p-1}{p}}\right\|_{L^p(M_{\lambda r})}^p,
\end{split}
\end{equation*}
where
$ p_{\star} = \frac{np}{n-p\bar{s}}$.

For $I_2$, we use \Cref{lemma:cutoff} again to have
\begin{equation*}
\begin{split}
I_2
&= 2\int_{M_{\lambda r}} \int_{M_{\lambda r}} |\tau(x)-\tau(y)|^p u^{-t+p-1}(x) \,\mu(x, \d y)\, \d x \\
&\leq C r^{-ps_{\max}}\left( \sum_{k=1}^n \left( \lambda^{s_{\max}/s_k}-1 \right)^{-s_k p} \right) \left\|u^{-t+p-1}\right\|_{L^1(M_{\lambda r})}.
\end{split}
\end{equation*}
For $I_3$, assuming that $\mathrm{supp}(\tau) \subset M_{(\lambda+1)r/2}$ and using \Cref{lemma:cutoff} we deduce
\begin{equation*}
\begin{split}
I_3
&\leq C \int_{M_{\lambda r}} \int_{\mathbb{R}^n \setminus M_{\lambda r}} \left( u^{p-1}(x)+u_-^{p-1}(y) \right) \tau^p(x) u^{-t}(x) \mu(x, \d y)\, \d x \\
&\leq C r^{-ps_{\max}}\left( \sum_{k=1}^n \left( \lambda^{s_{\max}/s_k}-1 \right)^{-s_k p} \right) \left\|u^{-t+p-1}\right\|_{L^1(M_{\lambda r})} \\
&\quad + C\epsilon^{1-p} \left( \sup_{x \in M_{(\lambda+1)r/2}} \int_{\mathbb{R}^n \setminus M_{\lambda r}} u_-^{p-1}(y) \mu(x, \d y) \right) \left\|u^{-t+p-1}\right\|_{L^1(M_{\lambda r})}.
\end{split}
\end{equation*}
Moreover, we estimate
\begin{equation*}
\begin{split}
|(f, -\tau^p u^{-t})|
&\leq \epsilon^{1-p} \int_{\R^n} |f| \tau^p u^{-t+p-1} \,\d x \\
&\leq \epsilon^{1-p} \|f\|_{L^{q/(p\bar{s})}(M_{\lambda r})} \left\| \tau^p u^{-t+p-1} \right\|_{L^{q/(q-p\bar{s})}(M_{\lambda r})} \\
&= \epsilon^{1-p} \|f\|_{L^{q/(p\bar{s})}(M_{\lambda r})} \left\| \tau u^{\frac{-t+p-1}{p}} \right\|_{L^{pq/(q-p\bar{s})}(M_{\lambda r})}^p.
\end{split}
\end{equation*}
Using Lyapunov's inequality and Young's inequality, we have
\begin{equation*}
\|v\|_{pq/(q-p\bar{s})}^p \leq \|v\|_{p_\star}^{np/q} \|v\|_p^{(qp-np)/q} \leq \frac{n}{q} \omega \|v\|_{p_{\star}}^p + \frac{q-n}{q} \omega^{-n/(q-n)} \|v\|_p^p
\end{equation*}
for any $v \in L^{p_{\star}} \cap L^p$ and any $\omega > 0$. This yields that
\begin{equation*}
\begin{split}
|(f, -\tau^p u^{-t})|
&\leq \epsilon^{1-p} \|f\|_{L^{q/(p\bar{s})}(M_{\lambda r})} \left( \frac{n}{q} \omega \left\| \tau u^{\frac{-t+p-1}{p}} \right\|_{L^{p_\star}}^p + \frac{q-n}{q} \omega^{-n/(q-n)} \left\|\tau u^{\frac{-t+p-1}{p}} \right\|_{L^p}^p \right) \\
&\leq r^{-ps_{\max}\frac{q-n}{q}} \left( \frac{n}{q} \omega \left\| \tau^p u^{-t+p-1} \right\|_{L^{\gamma}} + \frac{q-n}{q} \omega^{-n/(q-n)} \left\|\tau^p u^{-t+p-1} \right\|_{L^1} \right).
\end{split}
\end{equation*}
Combining all the estimates, we have
\begin{equation*}
\begin{split}
&\left\|\tau^p u^{-t+p-1} \right\|_{L^{\gamma}(M_{\lambda r})} \\
&\leq C r^{-ps_{\max}}\left( 1+\sum_{k=1}^n \left( \lambda^{s_{\max}/s_k}-1 \right)^{-s_k p} \right) \left\|u^{-t+p-1}\right\|_{L^1(M_{\lambda r})} \\
&\quad +Cr^{-ps_{\max}\frac{q-n}{q}} \left( \frac{n}{q} \omega \left\| \tau^p u^{-t+p-1} \right\|_{L^{\gamma}(M_{\lambda r})} + \frac{q-n}{q} \omega^{-n/(q-n)} \left\|\tau^p u^{-t+p-1} \right\|_{L^1(M_{\lambda r})} \right).
\end{split}
\end{equation*}
Taking $\omega = \varepsilon_0 r^{-ps_{\max}\frac{q-n}{q}}$ with $\varepsilon_0 > 0$ small enough, we arrive at
\begin{equation*}
\begin{split}
\left\| u^{-1} \right\|_{L^{(t-p+1)\gamma}(M_r)}^{t-p+1}
&\leq \left\|\tau^p u^{-t+p-1} \right\|_{L^{\gamma}(M_{\lambda r})} \\
&\leq C\left( 1+\sum_{k=1}^n \left( \lambda^{s_{\max}/s_k}-1 \right)^{-s_k p} \right) r^{-ps_{\max}} \left\|u^{-1}\right\|_{L^{t-p+1}(M_{\lambda r})}^{t-p+1},
\end{split}
\end{equation*}
where $C$ depends on $n$, $p$, $p_{\star}$, $t$, $s_0$, $q$ and $\Lambda$, and is bounded when $t$ is bounded away from $p-1$. Since $\lambda \leq 2$, we obtain
\begin{equation*}
\sum_{k=1}^n \left( \lambda^{s_{\max}/s_k}-1 \right)^{-s_k p} \geq \sum_{k=1}^n \left( \lambda^{1/s_k}-1 \right)^{-s_k p} \geq \sum_{k=1}^n 2^{-p} = n2^{-p},
\end{equation*}
from which the desired result follows.
\end{proof}

The standard iteration technique proves the following lemma, see \cite{CK20,DK20}.

\begin{lemma} \label{lem:inf}
Under the same assumptions as in \Cref{lem:iteration}, for any $p_0 > 0$ there is a constant $C = C(n, p, p_{\star}, q, p_0, s_0, \Lambda) > 0$ such that
\begin{equation}\label{eq:infest}
\inf_{M_r} u \geq C \left( \fint_{M_{2r}} u(x)^{-p_0} \, \mathrm{d}x \right)^{-1/p_0}.
\end{equation}
\end{lemma}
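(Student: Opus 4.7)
The plan is a standard Moser iteration fed by the reverse-type estimate in \Cref{lem:iteration}. Set $\gamma = n/(n-p\bar{s})>1$, and, for $j\ge 0$, let $r_j = r(1+2^{-j})$ and $q_j = p_0\gamma^j$. Thus $r_0=2r$, $r_j\searrow r$, and $q_j\to\infty$. At step $j$ I would apply \Cref{lem:iteration} at scale $r_{j+1}$ with dilation $\lambda_j = r_j/r_{j+1}\in(1,2]$ and parameter $t_j = q_j+p-1>p-1$, chosen so that the improved integrability exponent $(t_j-p+1)\gamma$ equals $q_{j+1}$. Because $M_{r_{j+1}}\subset M_{r_j}\subset M_{2r}$ and both the tail term and the $\|f\|_{L^{q/(p\bar{s})}}$-term get smaller on smaller rectangles, the hypotheses inherited from the statement of \Cref{lem:inf} (i.e.\ \Cref{lem:iteration} applied with $\lambda r$ up to $2r$) are satisfied uniformly at every iteration step.

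Taking $q_j$-th roots in the conclusion of \Cref{lem:iteration} and chaining $J$ steps yields
\[
\|u^{-1}\|_{L^{q_J}(M_{r_J})} \le \prod_{j=0}^{J-1}\bigl(C_j\,r^{-p s_{\max}}\bigr)^{1/q_j}\,\|u^{-1}\|_{L^{p_0}(M_{2r})},
\]
where $C_j$ bundles the $t$-dependent constant from \Cref{lem:iteration} with the geometric factor $\sum_{k}(\lambda_j^{s_{\max}/s_k}-1)^{-s_k p}$. Because $\lambda_j - 1\asymp 2^{-j}$ and $s_{\max}/s_k\le 1/s_0$, this geometric factor is dominated by $C(n,p,s_0)\,2^{jp}$; combined with a polynomial $t$-dependence inherited from the constants $c_1,c_2$ of \Cref{lem:alg_ineq}, one finds $\log C_j = O(j)$. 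Since $q_j = p_0\gamma^j$ grows geometrically, both $\sum_{j\ge 0}\log(C_j)/q_j$ and $\sum_{j\ge 0}1/q_j = \gamma/(p_0(\gamma-1)) = n/(p_0 p\bar{s})$ converge, and letting $J\to\infty$ gives
\[
\|u^{-1}\|_{L^\infty(M_r)} \le C\,r^{-n s_{\max}/(p_0\bar{s})}\,\|u^{-1}\|_{L^{p_0}(M_{2r})}
\]
with $C = C(n,p,p_{\star},q,p_0,s_0,\Lambda)$.

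To conclude, rewrite the right-hand side as $|M_{2r}|^{1/p_0}\bigl(\fint_{M_{2r}} u^{-p_0}\,\mathrm{d}x\bigr)^{1/p_0}$; using $|M_{2r}| = 2^n(2r)^{n s_{\max}/\bar{s}}$, the factor $r^{-n s_{\max}/(p_0\bar{s})}$ is absorbed into the constant, and taking reciprocals produces \eqref{eq:infest}. The main obstacle in the plan is the quantitative bookkeeping of the constant in \Cref{lem:iteration} as both $\lambda_j \to 1^+$ and $t_j \to \infty$: one must verify that the $t$-dependence of $c_1,c_2$ in \Cref{lem:alg_ineq} is at most polynomial (so that $\log C_j$ is linear in $j$) and that the $(\lambda_j^{s_{\max}/s_k}-1)^{-s_k p}$ factors are controlled uniformly in $k$ by a power of $\lambda_j-1$ depending only on $p$ and $s_0$. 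Once these growth rates are secured, convergence of the infinite product is automatic, and the $L^{\infty}$-to-$L^{p_0}$ bound follows as in \cite{CK20,DK20}.
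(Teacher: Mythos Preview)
Your proposal is correct and is precisely the standard Moser iteration that the paper invokes (the paper gives no proof, only the sentence ``The standard iteration technique proves the following lemma, see \cite{CK20,DK20}''). The bookkeeping you flag is indeed routine: from the explicit constants $c_1 = 2^{1-p}p^p/(t-p+1)^{p-1}$ and $c_2$ in the proof of \Cref{lem:alg_ineq} the $t$-dependence of the constant in \Cref{lem:iteration} is polynomial, and since $\lambda_j^{s_{\max}/s_k}-1 \ge \lambda_j-1 \asymp 2^{-j}$ the geometric factor is bounded by $C(n,p)\,2^{jp}$, so $\sum_j (\log C_j)/q_j$ converges as you claim.
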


The proof of \Cref{thm:weak_Harnack} follows from \Cref{thm:flipsigns}, \Cref{lem:inf} and the triangle inequality.

\section{H\"older estimates}\label{sec:hoeld}

This section is devoted to the proof of \Cref{thm:Holder}. The general scheme for the derivation of a priori interior H\"older estimates from the weak Harnack inequality in the non-local setting has been developed in \cite{DK20} and applied successfully to the anisotropic setting \cite{CK20} when $p=2$. We extend the result presented in \cite{CK20} to the general case $p > 1$.

Recall that the rectangles in \Cref{def:M_r} satisfy the following property. For $\lambda > 0$ and $\Omega \subset \R^n$ open, we have
\begin{equation*}
\mathcal{E}_\Omega^{\ma}(u\circ \Psi, v \circ \Psi) = \lambda^{-(n-\bar{s}p)s_{\max}/\bar{s}} \mathcal{E}_{\Psi(\Omega)}^{\ma}(u, v) \quad\text{for every}~ u, v \in V^{p,\ma}(\Omega|\R^n)
\end{equation*}
and
\begin{equation*}
(f \circ \Psi, \varphi \circ \Psi) = \lambda^{-ns_{\max}/\bar{s}} (f, \varphi)  \quad\text{for every}~ f \in L^{q/(p\bar{s})}(\Omega),~ \varphi \in H^{p,\ma}_{\Omega}(\R^n),
\end{equation*}
where $\Psi : \R^n \to \R^n$ is a diffeomorphism given by
\begin{equation} \label{eq:Psi}
\Psi(x) =
\begin{pmatrix}
\lambda^{s_{\max}/s_1} & \cdots & 0 \\
\vdots & \ddots & \vdots \\
0 & \cdots & \lambda^{s_{\max}/s_n}
\end{pmatrix}x.
\end{equation}
The rectangles from \Cref{def:M_r} are balls in a metric space $(\R^n,d)$, where the metric $d:\R^n\times\R^n\to[0,\infty)$ is defined as follows:
 \[ d(x,y) = \sup_{k\in\{1,\dots,n\}} |x_k-y_k|^{s_k/s_{\max}}. \]
By the scaling property and covering arguments provided in \cite{CK20}, it is enough to show the following theorem. 

\begin{theorem} \label{thm:Holder_0}
Let $\Lambda \ge 1$ and $s_1,\dots,s_n\in[s_0,1)$ be given for some $s_0\in(0,1)$. Let $1<p<  n/\bar{s}$. Assume $f\in L^{q/(p\bar{s})}(M_1)$ for some $q>n$. There are $\alpha=\alpha(n,p,p_{\star},q,s_0,\Lambda) \in (0,1)$ and $C=C(n,p,p_{\star},q,s_0,\Lambda) > 0$ such that for each $\mu \in \mathcal{K}(p,s_0,\Lambda)$ and every $u \in V^{p,\mu}(M_1 | \R^n)$ satisfying
\begin{equation*}
\mathcal{E}^\mu(u, \varphi) = (f, \varphi) \quad \text{for any}~ \varphi \in H^{p,\mu}_{M_1}(\R^n),
\end{equation*}
we have $u \in C^\alpha$ at $0$ and
\begin{equation*}
|u(x)-u(0)| \leq C\left( \|u\|_{L^\infty(\R^n)} + \|f\|_{L^{q/(p\bar{s})}(M_{15/16})} \right) d(x,0)^\alpha
\end{equation*}
for all $x \in M_1$.
\end{theorem}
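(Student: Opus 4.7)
The plan is to deduce the H\"older estimate from the weak Harnack inequality \Cref{thm:weak_Harnack} via the classical decay-of-oscillation scheme, implemented on the anisotropic rectangles $M_{\sigma^k}$ and pulled back to $M_1$ through the dilation $\Psi_k$ introduced in \eqref{eq:Psi}. Fix $\sigma\in(0,1/4]$ small, to be chosen, and set
\[
\bar u_k:=\sup_{M_{\sigma^k}}u,\qquad \underline u_k:=\inf_{M_{\sigma^k}}u,\qquad \omega_k:=\bar u_k-\underline u_k,\qquad B_k:=\tfrac{1}{2}(\bar u_k+\underline u_k).
\]
The core claim is the existence of $\alpha\in(0,1)$ and $K>0$, depending on $n,p,p_{\star},s_0,q,\Lambda$ and dominated by $\|u\|_{L^\infty(\R^n)}+\|f\|_{L^{q/(p\bar s)}(M_{15/16})}$, such that $\omega_k\le K\sigma^{\alpha k}$ for every $k\ge 0$. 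Granted this, any $x\in M_1$ with $d(x,0)\in[\sigma^{k+1},\sigma^k)$ obeys $|u(x)-u(0)|\le\omega_k\le K\sigma^{-\alpha}d(x,0)^\alpha$, giving the estimate; the base case $k=0$ is immediate from $\omega_0\le 2\|u\|_{L^\infty(\R^n)}$.

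For the induction step, rescale by setting $\tilde u(y):=(u(\Psi_k y)-B_k)/(K\sigma^{\alpha k}/2)$, so $|\tilde u|\le 1$ on $M_1$. The scaling identity recalled right before \eqref{eq:Psi}, combined with \Cref{assumption:comparability} and \Cref{assumption:tail} (which together show that the class $\cK(p,s_0,\Lambda)$ is preserved under $\Psi_k$) and the $(p-1)$-homogeneity of $L$, gives a rescaled admissible measure $\tilde\mu\in\cK(p,s_0,\Lambda)$ and a rescaled datum
\[
\tilde f(y)\;=\;(2/(K\sigma^{\alpha k}))^{p-1}\,\sigma^{kps_{\max}}\,f(\Psi_k y),
\]
so that $\cE^{\tilde\mu}(\tilde u,\varphi)=(\tilde f,\varphi)$ on $M_1$ and $\|\tilde f\|_{L^{q/(p\bar s)}(M_{15/16})}\le CK^{-(p-1)}\sigma^{k[ps_{\max}(1-n/q)-\alpha(p-1)]}\|f\|_{L^{q/(p\bar s)}(M_{15/16})}$. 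One of $|\{\tilde u\le 0\}\cap M_{1/2}|$ or $|\{\tilde u> 0\}\cap M_{1/2}|$ is at least $|M_{1/2}|/2$; accordingly set $v:=1-\tilde u$ or $v:=1+\tilde u$. Either way $v\ge 0$ on $M_1$ is a nonnegative weak supersolution of the rescaled equation, and by Chebyshev $(\fint_{M_{1/2}}v^{p_0})^{1/p_0}\ge 2^{-1/p_0}$ for the $p_0$ of \Cref{thm:weak_Harnack}.

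Applying \Cref{thm:weak_Harnack} now yields
\[
\inf_{M_{1/4}}v \;\ge\; c_0 \;-\; 2\sup_{x\in M_{15/16}}\Bigl(\int_{\R^n\setminus M_1}v_-^{p-1}(z)\,\tilde\mu(x,\d z)\Bigr)^{1/(p-1)} \;-\; \|\tilde f\|_{L^{q/(p\bar s)}(M_{15/16})},
\]
with $c_0=c_0(n,p,p_{\star},s_0,q,\Lambda)>0$. The source term is $\le c_0/4$ uniformly in $k$ once $K$ is chosen large in terms of $\|f\|$ and $\alpha<ps_{\max}(1-n/q)/(p-1)$, the hypothesis $q>n$ making the exponent positive. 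For the tail I decompose $\R^n\setminus M_1$ into the anisotropic shells $M_{\sigma^{-m}}\setminus M_{\sigma^{-m+1}}$ for $m\ge 1$; since $\Psi_k$ sends the $m$-th shell into $M_{\sigma^{k-m}}\setminus M_{\sigma^{k-m+1}}$, the induction hypothesis $\omega_{k-m}\le K\sigma^{\alpha(k-m)}$ gives $v_-(z)\lesssim \sigma^{-\alpha m}$ on that shell, and \eqref{assmu1} applied to $\tilde\mu$ bounds the measure of the shell by $C\sigma^{(m-1)ps_{\max}}$. Summing,
\[
\int_{\R^n\setminus M_1}v_-^{p-1}(z)\,\tilde\mu(x,\d z) \;\le\; C\sum_{m\ge 1}\sigma^{-\alpha(p-1)m}\sigma^{(m-1)ps_{\max}},
\]
which converges provided $\alpha<ps_{\max}/(p-1)$ and, after first fixing $\sigma$ small enough and then $\alpha$ small, can be forced below $(c_0/4)^{p-1}$. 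Therefore $\inf_{M_{1/4}}v\ge c_0/2$, which unwinds to $\omega_{k+1}\le(1-c_0/4)K\sigma^{\alpha k}$ with the choice $\sigma=1/4$; one final shrinkage of $\alpha$ so that $\sigma^\alpha\ge 1-c_0/4$ closes the induction.

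The main difficulty is the joint calibration of $\sigma$, $\alpha$ and $K$ against the tail and source contributions. Because the induction provides only an upper bound on $\omega_k$, $\tilde u$ can grow like $\sigma^{-\alpha m}$ on the $m$-th shell, so tail convergence forces $\alpha<ps_{\max}/(p-1)$, while the quantitative smallness needed to absorb the tail into $c_0/2$ forces a further reduction of both $\alpha$ and $\sigma$; these must still be consistent with the closing inequality $\sigma^\alpha\ge 1-c_0/4$. The scaling of $\tilde f$ is the analytic step most different from the linear case $p=2$ treated in \cite{CK20}: the $(p-1)$-homogeneity of $L$ produces the factor $(K\sigma^{\alpha k})^{-(p-1)}$ that must be beaten by the positive power $\sigma^{kps_{\max}(1-n/q)}$ coming from the change of variables in $f$, which is precisely why the hypothesis $q>n$ is imposed.
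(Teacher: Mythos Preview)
Your overall scheme is the same as the paper's, and your scaling of the right-hand side (the factor $(K\sigma^{\alpha k}/2)^{-(p-1)}$ coming from the $(p-1)$-homogeneity of $L$) is correct. However, the tail step contains a genuine gap that breaks the argument.

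You track only the oscillations $\omega_k\le K\sigma^{\alpha k}$, and from this you deduce $v_-(z)\lesssim\sigma^{-\alpha m}$ on the $m$-th shell. This bound is correct, but it is too crude: for $m=1$ one only gets $v_-\le 2\sigma^{-\alpha}-1$, which tends to $1$ (not $0$) as $\alpha\to 0$. Consequently, in your displayed tail sum
\[
C\sum_{m\ge 1}\sigma^{-\alpha(p-1)m}\,\sigma^{(m-1)ps_{\max}},
\]
the $m=1$ term equals $C\sigma^{-\alpha(p-1)}\ge C$ for every $\sigma\in(0,1]$ and every $\alpha>0$. The sum is therefore bounded below by a fixed positive constant independent of $\sigma$ and $\alpha$, so it cannot be ``forced below $(c_0/4)^{p-1}$'' as you claim; in particular the step ``first fixing $\sigma$ small enough and then $\alpha$ small'' does not help. (Your later switch to ``the choice $\sigma=1/4$'' after having declared $\sigma$ small is a symptom of the same difficulty.)

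The fix is exactly what the paper does: instead of tracking only $\omega_k$, construct nested sequences $a_k\le u\le b_k$ on $M_{4^{-k}}$ with $b_k-a_k=4^{-\alpha k}$ \emph{exactly}. Normalising by $(b_k-a_k)/2$ and shifting by $a_k$ (rather than by the midpoint $B_k$) then gives, via the nesting $[a_k,b_k]\subset[a_{k-j},b_{k-j}]$, the sharper bound $v\ge 2(1-4^{\alpha j})$ on $M_{4^j}$, hence $v_-\le 2(4^{\alpha j}-1)$. This does vanish as $\alpha\to 0$, and the tail sum $\sum_j (4^{\alpha j}-1)^{p-1}4^{-ps_0 j}$ can then be made as small as needed by splitting at a large $l$ and sending $\alpha\to 0$. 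With the crucial ``$-1$'' recovered, no smallness of $\sigma$ is required and one can work with $\sigma=1/4$ throughout.
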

\begin{proof}
We assume that $2\|u\|_{L^\infty(\R^n)} + \kappa^{-1} \|f\|_{L^{q/(p\bar{s})}(M_{15/16})} \leq 1$ for some $\kappa > 0$ which will be chosen later. It is enough to construct sequences $\lbrace a_k \rbrace_{k=0}^\infty$ and $\lbrace b_k \rbrace_{k=0}^\infty$ such that $a_k \leq u \leq b_k$ in $M_{1/4^k}$ and $b_k - a_k = 4^{-\alpha k}$ for some $\alpha > 0$. For $k=0$, we set $a_0 = -1/2$ and $b_0 = 1/2$. Assume that we have constructed such sequences up to $k$ and let us choose $a_{k+1}$ and $b_{k+1}$.

We assume
\begin{equation} \label{eq:half_measure}
|\lbrace u \geq (b_k+a_k)/2 \rbrace \cap M_{\frac{1}{2}4^{-k}}| \geq |M_{\frac{1}{2}4^{-k}}|/2,
\end{equation}
and then prove that we can choose $a_{k+1}$ and $b_{k+1}$. If \eqref{eq:half_measure} does not hold, then we can consider $-u$ instead of $u$. Let $\Psi$ be the diffeomorphism given by \eqref{eq:Psi} with $\lambda = 4^{-k}$ and define
\begin{equation*}
v(x) = \frac{u(\Psi(x))-a_k}{(b_k-a_k)/2} \quad\text{and}\quad g(x) = \frac{\lambda^{ps_{\max}} f(\Psi(x))}{(b_k-a_k)/2}.
\end{equation*}
Then, $v \geq 0$ in $M_1$ and $\mathcal{E}^\mu_{M_1}(v,\varphi) = (g, \varphi)$ for every $\varphi \in H^\mu_{M_1}(\R^n)$. Moreover, it is easy to see that $v \geq 2(1-4^{\alpha j}) $ in $M_{4^j}$ for every $j \geq 0$ by induction hypothesis. By applying \Cref{thm:weak_Harnack}, we obtain
\begin{equation} \label{eq:v_WHI}
\begin{split}
&\left( \fint_{M_{1/2}} v^{p_0}(x) \,\mathrm{d}x \right)^{1/p_0} \\
&\leq C\inf_{M_{1/4}} v + C \sup_{x \in M_{15/16}} \left( \int_{\R^n\setminus M_1} v^-(y)^{p-1} \,\mu(x, \d y) \right)^{1/(p-1)} + \|g\|_{L^{q/(p\bar{s})}(M_{15/16})}.
\end{split}
\end{equation}
By taking $\alpha < ps_0 \frac{q-n}{q}$, we have
\begin{equation} \label{eq:g}
\|g\|_{L^{q/(p\bar{s})}(M_{15/16})} = 2\cdot 4^{(\alpha-ps_{\max}\frac{q-n}{q})k} \|f\|_{L^{q/(p\bar{s})}(M_{4^{-k} \cdot 15/16})} \leq 2\kappa.
\end{equation}
For $x \in M_{15/16}$ and for each $j \geq 1$, we have $M_{4^j} \setminus M_{4^{j+1}} \subset \R^n \setminus M_{4^j}(x)$. Hence, by \eqref{assmu1}
\begin{equation} \label{eq:tail}
\begin{split}
\int_{\R^n \setminus M_1} v^-(y)^{p-1} \,\mu(x, \d y)
&\leq \sum_{j=1}^\infty \int_{M_{4^j} \setminus M_{4^{j+1}}} (2(4^{\alpha j}-1))^{p-1} \,\mu(x, \d y) \\
&\leq \sum_{j=1}^\infty (2(4^{\alpha j}-1))^{p-1} \mu(x, \R^n \setminus M_{4^j}(x)) \\
&\leq \sum_{j=1}^l \Lambda(2(4^{\alpha j}-1))^{p-1} 4^{-ps_0 j} + 2^{p-1} \Lambda \sum_{j=l+1}^\infty 4^{(\alpha (p-1)-ps_0)j}.
\end{split}
\end{equation}
If we assume that $\alpha < \frac{ps_0}{2(p-1)}$, then we can make the last term in \eqref{eq:tail} as small as we want by taking $l = l(p,s_0)$ sufficiently large. Since the first term in \eqref{eq:tail} converges to 0 as $\alpha \to 0$, we have
\begin{equation} \label{eq:tail2}
C \sup_{x \in M_{15/16}} \left( \int_{\R^n\setminus M_1} v^-(y)^{p-1} \,\mu(x, \d y) \right)^{1/(p-1)} \leq \kappa
\end{equation}
by assuming further that $\alpha = \alpha(n, p, p_{\star}, q, s_0, \Lambda)$ is sufficiently small.

On the other hand, it follows from \eqref{eq:half_measure} that
\begin{equation} \label{eq:L_p0}
\left( \fint_{M_{1/2}} v^{p_0}(x) \,\mathrm{d}x \right)^{1/p_0} \geq \left( \frac{1}{|M_{1/2}|} \int_{M_{1/2} \cap \lbrace v \geq 1 \rbrace} v^{p_0}(x) \,\d x \right)^{1/p_0} \geq 2^{-1/p_0}.
\end{equation}
Combining \eqref{eq:v_WHI}, \eqref{eq:g}, \eqref{eq:tail2}, and \eqref{eq:L_p0}, and choosing $\kappa > 0$ sufficiently small, we arrive at $\inf_{M_{1/4}} v \geq \kappa_0$ for some $\kappa_0$. We take $a_{k+1} = a_k + \kappa_0 (b_k-a_k)/2$ and $b_{k+1}=b_k$, and make $\alpha$ and $\kappa_0$ small so that $1-\kappa_0/2 = 4^{-\alpha}$. Then $a_{k+1} \leq u \leq b_{k+1}$ in $M_{4^{-(k+1)}}$ and $b_{k+1}-a_{k-1} = 4^{-\alpha(k+1)}$, which finishes the proof.
\end{proof}

\begin{appendix}

\section{Algebraic inequalities} \label{sec:ineq}

In this section we prove \Cref{lem:alg_ineq} using the series of lemmas below.

\begin{lemma} \label{lem:ineq1}
Let $a, b > 0$ and $t > p-1 > 0$. Then,
\begin{equation*}
|b-a|^{p-2}(b-a)(a^{-t} - b^{-t}) \geq t\left( \frac{p}{t-p+1} \right)^p \left| a^{\frac{-t+p-1}{p}} - b^{\frac{-t+p-1}{p}} \right|^p.
\end{equation*}
\end{lemma}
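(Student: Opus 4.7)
\medskip

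\noindent\textbf{Plan.} The inequality is symmetric in $a$ and $b$ on both sides: for the left-hand side, swapping $a$ and $b$ flips the sign of $b-a$ and of $a^{-t}-b^{-t}$, while $|b-a|^{p-2}$ is unchanged; the right-hand side is manifestly symmetric. Hence we may assume without loss of generality that $a<b$, so that $b-a>0$ and $a^{-t}-b^{-t}>0$, and since $-t+p-1<0$ also $a^{(-t+p-1)/p}-b^{(-t+p-1)/p}>0$. The target inequality then reduces to
\begin{equation*}
(b-a)^{p-1}(a^{-t}-b^{-t}) \;\geq\; t\left(\frac{p}{t-p+1}\right)^{p}\left(a^{(-t+p-1)/p}-b^{(-t+p-1)/p}\right)^{p}.
\end{equation*}

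\medskip

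\noindent The key idea is to represent both differences as integrals via the fundamental theorem of calculus. Since $\tfrac{d}{d\xi}(-\xi^{-t})=t\xi^{-t-1}$ and $\tfrac{d}{d\xi}\xi^{(-t+p-1)/p}=\tfrac{-(t-p+1)}{p}\xi^{-(t+1)/p}$, one has
\begin{equation*}
a^{-t}-b^{-t}=t\int_{a}^{b}\xi^{-t-1}\,\d\xi,\qquad a^{(-t+p-1)/p}-b^{(-t+p-1)/p}=\frac{t-p+1}{p}\int_{a}^{b}\xi^{-(t+1)/p}\,\d\xi.
\end{equation*}
After substituting these expressions, the constant $t\bigl(p/(t-p+1)\bigr)^{p}$ absorbs $\bigl((t-p+1)/p\bigr)^{p}$ exactly, and the claim becomes equivalent to
\begin{equation*}
(b-a)^{p-1}\int_{a}^{b}\xi^{-t-1}\,\d\xi \;\geq\; \left(\int_{a}^{b}\xi^{-(t+1)/p}\,\d\xi\right)^{\!p}.
\end{equation*}

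\medskip

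\noindent This is a direct application of H\"older's inequality on $[a,b]$ with conjugate exponents $p/(p-1)$ and $p$ (using $p>1$): writing $\xi^{-(t+1)/p}=1\cdot\xi^{-(t+1)/p}$,
\begin{equation*}
\int_{a}^{b}\xi^{-(t+1)/p}\,\d\xi\;\leq\;\left(\int_{a}^{b}1\,\d\xi\right)^{\!(p-1)/p}\!\left(\int_{a}^{b}\xi^{-t-1}\,\d\xi\right)^{\!1/p}=(b-a)^{(p-1)/p}\left(\int_{a}^{b}\xi^{-t-1}\,\d\xi\right)^{\!1/p}.
\end{equation*}
Raising to the $p$-th power yields the desired bound and completes the proof.

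\medskip

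\noindent\textbf{Main obstacle.} There is no serious obstacle: the whole argument reduces, after the integral reformulation, to one invocation of H\"older's inequality. The only point requiring a bit of care is keeping track of signs, since $-t+p-1<0$ flips the order of $a^{(-t+p-1)/p}$ and $b^{(-t+p-1)/p}$ relative to $a^{-t}$ and $b^{-t}$; the symmetry reduction at the outset handles this cleanly and ensures that both integrals are taken over $[a,b]$ with $a<b$, so no auxiliary absolute values remain.
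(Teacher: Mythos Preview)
Your proof is correct and follows essentially the same approach as the paper: both represent the two differences as integrals of $\xi^{-t-1}$ and $\xi^{-(t+1)/p}$ over $[a,b]$ and then compare via a single convexity inequality. The paper invokes Jensen's inequality $\bigl(\fint_a^b f'\bigr)^p\le\fint_a^b (f')^p$ with $f(x)=-x^{(-t+p-1)/p}$, which is precisely your H\"older step $\int h\le(b-a)^{(p-1)/p}(\int h^p)^{1/p}$ for $h(\xi)=\xi^{-(t+1)/p}$.
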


\begin{proof}
We may assume that $b > a$. Let $f(x) = -x^{\frac{-t+p-1}{p}}$ and $g(x) = -x^{-t}$, then by using Jensen's inequality we have
\begin{equation*}
\begin{split}
\left| \frac{f(b)-f(a)}{b-a} \right|^p
&= \left| \fint_a^b f'(x) \,\d x \right|^p \leq \fint_a^b (f'(x))^p \,\d x \\
&= \frac{1}{t} \left( \frac{t-p+1}{p} \right)^p \fint_a^b g'(x) \,\d x = \frac{1}{t} \left( \frac{t-p+1}{p} \right)^p \frac{g(b)-g(a)}{b-a},
\end{split}
\end{equation*}
which proves the lemma.
\end{proof}

\begin{lemma} \label{lem:ineq2}
Let $a, b > 0$ and $t > p-1 > 0$. Then,
\begin{equation*}
|b-a|^{p-1} \min\lbrace a^{-t}, b^{-t} \rbrace \leq \left( \frac{p}{t-p+1} \right)^{p-1} \left| a^{\frac{-t+p-1}{p}} - b^{\frac{-t+p-1}{p}} \right|^{p-1} \min\left\lbrace a^{\frac{-t+p-1}{p}}, b^{\frac{-t+p-1}{p}} \right\rbrace.
\end{equation*}
\end{lemma}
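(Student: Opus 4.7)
By symmetry, I may assume $b > a > 0$; then $\min\{a^{-t}, b^{-t}\} = b^{-t}$, and setting $\gamma := \frac{-t+p-1}{p} < 0$, the map $x \mapsto x^\gamma$ is strictly decreasing, so $\min\{a^\gamma, b^\gamma\} = b^\gamma$. The inequality to prove thus becomes
\begin{equation*}
(b-a)^{p-1}\, b^{-t} \;\leq\; \Bigl(\tfrac{1}{-\gamma}\Bigr)^{p-1} (a^\gamma - b^\gamma)^{p-1}\, b^\gamma.
\end{equation*}
Using $-t = p\gamma - (p-1)$ to simplify $b^{-t}/b^\gamma = b^{(p-1)(\gamma-1)}$ and taking $(p-1)$-th roots (everything is positive), this is equivalent to the one-variable statement
\begin{equation*}
(-\gamma)\,(b-a)\, b^{\gamma-1} \;\leq\; a^\gamma - b^\gamma.
\end{equation*}

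The plan is to establish this last inequality by convexity. Since $\gamma < 0$, the function $f(x) = x^\gamma$ is convex and decreasing on $(0,\infty)$, with $f'(x) = \gamma x^{\gamma-1}$. The tangent-line inequality at $x = b$ gives $f(a) \geq f(b) + f'(b)(a-b)$, i.e.
\begin{equation*}
a^\gamma - b^\gamma \;\geq\; \gamma\, b^{\gamma-1}(a-b) \;=\; (-\gamma)\, b^{\gamma-1}(b-a),
\end{equation*}
which is exactly what is required. (Alternatively, a mean-value-theorem argument works: write $a^\gamma - b^\gamma = \gamma c^{\gamma-1}(a-b)$ for some $c \in (a,b)$, and note that $x \mapsto x^{\gamma-1}$ is decreasing since $\gamma - 1 < 0$, so $c^{\gamma-1} \geq b^{\gamma-1}$.)

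The reduction in the first paragraph is purely bookkeeping with the exponents, and the second paragraph is a one-line convexity estimate, so there is no real obstacle; the only mild care needed is to keep track of signs, since $\gamma$, $\gamma-1$, and $a^\gamma - b^\gamma$ all involve negative quantities whose absolute values must be handled correctly when raising to the power $p-1$.
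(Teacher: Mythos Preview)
Your proof is correct and is essentially the same argument as the paper's. The paper writes the difference quotient $\bigl|a^{\gamma}-b^{\gamma}\bigr|/(b-a)$ as the integral average $\fint_a^b |\gamma|\,x^{\gamma-1}\,\d x$ and bounds the integrand below by its value at $b$, while you package the same monotonicity of $x\mapsto x^{\gamma-1}$ as the tangent-line inequality for the convex function $x\mapsto x^{\gamma}$ (or, in your alternative, via the mean value theorem); the substance is identical.
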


\begin{proof}
We may assume that $b > a$. Let $f(x) = -x^{\frac{-t+p-1}{p}}$, then
\begin{equation*}
\begin{split}
\left| \frac{f(b)-f(a)}{b-a} \right|^{p-1}
&= \left| \fint_a^b f'(x) \,\d x \right|^{p-1} = \left( \frac{t-p+1}{p} \right)^{p-1} \left( \fint_a^b x^{\frac{-t-1}{p}} \,\d x \right)^{p-1} \\
&\geq \left( \frac{t-p+1}{p} \right)^{p-1} \left( \fint_a^b b^{\frac{-t-1}{p}} \,\d x \right)^{p-1} = \left( \frac{t-p+1}{p} \right)^{p-1} \frac{b^{-t}}{b^{\frac{-t+p-1}{p}}},
\end{split}
\end{equation*}
which proves the lemma.
\end{proof}

\begin{lemma} \label{lem:ineq3}
Let $\tau_1, \tau_2 \in [0,1]$ and $p > 1$. Then,
\begin{equation*}
|\tau_1^p - \tau_2^p| \leq p |\tau_1-\tau_2| \max\lbrace \tau_1^{p-1},\tau_2^{p-1} \rbrace.
\end{equation*}
\end{lemma}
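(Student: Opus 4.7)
My plan is to reduce the desired inequality to a one-variable monotonicity argument via the fundamental theorem of calculus. First I would observe that the statement is symmetric under swapping $\tau_1$ and $\tau_2$, so I may assume without loss of generality that $\tau_1 \geq \tau_2 \geq 0$. Then $|\tau_1 - \tau_2| = \tau_1 - \tau_2$, and since the hypothesis $p > 1$ gives $p-1 > 0$, the function $x \mapsto x^{p-1}$ is non-decreasing on $[0,\infty)$; in particular $\max\{\tau_1^{p-1}, \tau_2^{p-1}\} = \tau_1^{p-1}$.

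Next, I would write the difference as an integral,
\[
\tau_1^p - \tau_2^p \;=\; p \int_{\tau_2}^{\tau_1} x^{p-1}\,\d x,
\]
and estimate the integrand pointwise by its supremum $\tau_1^{p-1}$ on the interval $[\tau_2,\tau_1]$, which yields
\[
\tau_1^p - \tau_2^p \;\leq\; p(\tau_1 - \tau_2)\,\tau_1^{p-1} \;=\; p\,|\tau_1 - \tau_2|\,\max\{\tau_1^{p-1}, \tau_2^{p-1}\}.
\]
This is exactly the claim. Alternatively, one could invoke the mean value theorem applied to $f(x) = x^p$ to obtain $\tau_1^p - \tau_2^p = p\xi^{p-1}(\tau_1 - \tau_2)$ for some $\xi$ between $\tau_2$ and $\tau_1$, and then bound $\xi^{p-1} \leq \tau_1^{p-1}$ by the same monotonicity.

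I do not anticipate any genuine obstacle: the proof reduces to a one-line application of the monotonicity of $x^{p-1}$ on $[0,\infty)$, and this is precisely the point where the hypothesis $p > 1$ enters the argument. The only mild care needed is to handle the boundary case $\tau_2 = 0$, which is covered by the integral formulation with no additional work.
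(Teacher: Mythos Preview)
Your proof is correct and essentially the same as the paper's: the paper simply says the inequality follows from the convexity of $f(\tau)=\tau^p$, and your integral/mean value argument is precisely the standard way to unpack that convexity (monotonicity of $f'(\tau)=p\tau^{p-1}$). There is no meaningful difference in approach.
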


\begin{proof}
The desired inequality follows from the convexity of the function $f(\tau) = \tau^p$.
\end{proof}

\begin{lemma} \label{lem:ineq4}
Let $a, b > 0$, $\tau_1, \tau_2 \in [0,1]$, and $t > p-1 > 0$. Then,
\begin{equation} \label{eq:min}
\begin{split}
&\min\lbrace \tau_1^p, \tau_2^p \rbrace \left| a^{\frac{-t+p-1}{p}} - b^{\frac{-t+p-1}{p}} \right|^p \\
&\geq 2^{1-p} \left| \tau_1 a^{\frac{-t+p-1}{p}} - \tau_2 b^{\frac{-t+p-1}{p}} \right|^p - |\tau_1-\tau_2|^p \max \lbrace a^{-t+p-1}, b^{-t+p-1} \rbrace
\end{split}
\end{equation}
and
\begin{equation} \label{eq:max}
\begin{split}
&\max\lbrace \tau_1^p, \tau_2^p \rbrace \left| a^{\frac{-t+p-1}{p}} - b^{\frac{-t+p-1}{p}} \right|^p \\
&\leq 2^{p-1} \left| \tau_1 a^{\frac{-t+p-1}{p}} - \tau_2 b^{\frac{-t+p-1}{p}} \right|^p + 2^{p-1} |\tau_1-\tau_2|^p \max \lbrace a^{-t+p-1}, b^{-t+p-1} \rbrace.
\end{split}
\end{equation}
\end{lemma}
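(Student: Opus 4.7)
My plan is to treat both inequalities in Lemma~\ref{lem:ineq4} as purely algebraic facts about four nonnegative reals, stripping away the specific exponential structure first. Setting $A = a^{(-t+p-1)/p}$ and $B = b^{(-t+p-1)/p}$, the quantity $\max\{a^{-t+p-1},b^{-t+p-1}\}$ becomes $\max\{A^p,B^p\}$, and the claim reduces to the two inequalities
\begin{align*}
\min\{\tau_1^p,\tau_2^p\}|A-B|^p &\geq 2^{1-p}|\tau_1 A-\tau_2 B|^p - |\tau_1-\tau_2|^p\max\{A^p,B^p\},\\
\max\{\tau_1^p,\tau_2^p\}|A-B|^p &\leq 2^{p-1}|\tau_1 A-\tau_2 B|^p + 2^{p-1}|\tau_1-\tau_2|^p\max\{A^p,B^p\},
\end{align*}
to be proved for all $A,B\geq 0$ and $\tau_1,\tau_2\in[0,1]$. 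Both sides of each inequality are symmetric under the swap $(\tau_1,A)\leftrightarrow(\tau_2,B)$, so I may assume a convenient ordering of $\tau_1,\tau_2$ in each case.

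For \eqref{eq:min} I would, after relabeling, assume $\tau_1\leq\tau_2$ so that $\min\{\tau_1^p,\tau_2^p\}=\tau_1^p$. The triangle inequality applied to the decomposition $\tau_1 A-\tau_2 B = \tau_1(A-B)+(\tau_1-\tau_2)B$ gives $|\tau_1 A-\tau_2 B|\leq \tau_1|A-B|+|\tau_1-\tau_2|B$. Raising to the $p$-th power and using the elementary convexity bound $(x+y)^p\leq 2^{p-1}(x^p+y^p)$ for $x,y\geq 0$ yields
\[
|\tau_1 A-\tau_2 B|^p \leq 2^{p-1}\tau_1^p|A-B|^p + 2^{p-1}|\tau_1-\tau_2|^p B^p,
\]
so that after dividing by $2^{p-1}$ and bounding $B^p\leq \max\{A^p,B^p\}$, one obtains exactly \eqref{eq:min}.

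For \eqref{eq:max} I would instead assume $\tau_2\leq\tau_1$ so that $\max\{\tau_1^p,\tau_2^p\}=\tau_1^p$. The identity $\tau_1(A-B)=(\tau_1 A-\tau_2 B)-(\tau_1-\tau_2)B$ and the triangle inequality give $\tau_1|A-B|\leq|\tau_1 A-\tau_2 B|+|\tau_1-\tau_2|B$. Again applying $(x+y)^p\leq 2^{p-1}(x^p+y^p)$ and bounding $B^p\leq \max\{A^p,B^p\}$ yields \eqref{eq:max}.

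There is no genuine obstacle here; the proof is purely elementary bookkeeping using the triangle inequality and convexity of $x\mapsto x^p$. The only subtlety worth being explicit about is the use of the symmetry $(\tau_1,A)\leftrightarrow(\tau_2,B)$ to reduce to a single ordering of $\tau_1$ and $\tau_2$, since an ill-chosen split of $\tau_1 A-\tau_2 B$ would yield $A^p$ or $B^p$ in place of the maximum, which is why the statement is phrased in terms of $\max\{A^p,B^p\}$ on the right-hand side.
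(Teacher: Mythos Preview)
Your proof is correct and essentially identical to the paper's. Both arguments use the same decomposition of $\tau_1 A-\tau_2 B$ (respectively $\tau_1(A-B)$) together with the convexity bound $(x+y)^p\le 2^{p-1}(x^p+y^p)$; the only cosmetic differences are your substitution $A=a^{(-t+p-1)/p}$, $B=b^{(-t+p-1)/p}$ and your explicit invocation of the $(\tau_1,A)\leftrightarrow(\tau_2,B)$ symmetry, whereas the paper simply treats one ordering and remarks that the other is analogous.
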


\begin{proof}
For \eqref{eq:min}, we assume that $\tau_1 \geq \tau_2$. Then, we obtain from
\begin{equation*}
\tau_1 a^{\frac{-t+p-1}{p}} - \tau_2 b^{\frac{-t+p-1}{p}} = \tau_2 \left( a^{\frac{-t+p-1}{p}} - b^{\frac{-t+p-1}{p}} \right) + (\tau_1-\tau_2) a^{\frac{-t+p-1}{p}}
\end{equation*}
that
\begin{equation*}
\begin{split}
2^{1-p} \left| \tau_1 a^{\frac{-t+p-1}{p}} - \tau_2 b^{\frac{-t+p-1}{p}} \right|^p 
&\leq \tau_2^p \left|a^{\frac{-t+p-1}{p}} - b^{\frac{-t+p-1}{p}}\right|^p + |\tau_1-\tau_2|^p a^{-t+p-1} \\
&\leq \tau_2^p \left|a^{\frac{-t+p-1}{p}} - b^{\frac{-t+p-1}{p}}\right|^p + |\tau_1-\tau_2|^p \max \lbrace a^{-t+p-1}, b^{-t+p-1} \rbrace,
\end{split}
\end{equation*}
from which \eqref{eq:min} follows. The other case $\tau_1 < \tau_2$ can be proved in the same way.

For \eqref{eq:max}, we assume that $\tau_1 \geq \tau_2$. Then
\begin{equation*}
\begin{split}
\max\lbrace \tau_1^p, \tau_2^p \rbrace \left| a^{\frac{-t+p-1}{p}} - b^{\frac{-t+p-1}{p}} \right|^p
&= \left| \left( \tau_1 a^{\frac{-t+p-1}{p}} - \tau_2 b^{\frac{-t+p-1}{p}} \right) + (\tau_2-\tau_1) b^{\frac{-t+p-1}{p}} \right|^p \\
&\leq 2^{p-1} \left| \tau_1 a^{\frac{-t+p-1}{p}} - \tau_2 b^{\frac{-t+p-1}{p}} \right|^p + 2^{p-1} |\tau_2-\tau_1|^p b^{-t+p-1}.
\end{split}
\end{equation*}
The proof for the case $\tau_1 < \tau_2$ is the same.
\end{proof}

\begin{proof} [Proof of \Cref{lem:alg_ineq}]
We may assume that $b > a$. We begin with the equality
\begin{equation} \label{eq:AB}
\begin{split}
&|b-a|^{p-2}(b-a)(\tau_1^p a^{-t} - \tau_2^p b^{-t}) \\
&= (b-a)^{p-1}(a^{-t} - b^{-t})\tau_1^p + (b-a)^{p-1} b^{-t}(\tau_1^p-\tau_2^p) =: A + B.
\end{split}
\end{equation}
By \Cref{lem:ineq1} and \eqref{eq:min}, we have
\begin{equation} \label{eq:A}
\begin{split}
A
&\geq t\left( \frac{p}{t-p+1} \right)^p \left| a^{\frac{-t+p-1}{p}} - b^{\frac{-t+p-1}{p}} \right|^p \min\lbrace \tau_1^p, \tau_2^p \rbrace \\
&\geq t\left( \frac{p}{t-p+1} \right)^p \left( 2^{1-p} \left| \tau_1 a^{\frac{-t+p-1}{p}} - \tau_2 b^{\frac{-t+p-1}{p}} \right|^p - |\tau_1-\tau_2|^p \max \lbrace a^{-t+p-1}, b^{-t+p-1} \rbrace \right).
\end{split}
\end{equation}
For $B$, we use \Cref{lem:ineq2}, \Cref{lem:ineq3}, and Young's inequality to obtain
\begin{equation*}
\begin{split}
B
&\geq - p \left( \frac{p}{t-p+1} \right)^{p-1} \left| a^{\frac{-t+p-1}{p}} - b^{\frac{-t+p-1}{p}} \right|^{p-1} b^{\frac{-t+p-1}{p}} |\tau_1-\tau_2| \max\lbrace \tau_1^{p-1}, \tau_2^{p-1} \rbrace \\
&\geq - (p-1) \left( \frac{p}{t-p+1} \right)^p \varepsilon^{p/(p-1)} \left| a^{\frac{-t+p-1}{p}} - b^{\frac{-t+p-1}{p}} \right|^p \max\lbrace \tau_1^p, \tau_2^p \rbrace - \frac{1}{\varepsilon^p} b^{-t+p-1} |\tau_1-\tau_2|^p
\end{split}
\end{equation*}
for any $\varepsilon > 0$. Using \eqref{eq:max}, we have
\begin{equation} \label{eq:B}
\begin{split}
B
&\geq - 2^{p-1} (p-1) \left( \frac{p}{t-p+1} \right)^p \varepsilon^{p/(p-1)} \left| \tau_1 a^{\frac{-t+p-1}{p}} - \tau_2 b^{\frac{-t+p-1}{p}} \right|^p \\
&\quad - \left( 2^{p-1} (p-1) \left( \frac{p}{t-p+1} \right)^p \varepsilon^{p/(p-1)} + \frac{1}{\varepsilon^p} \right) |\tau_1-\tau_2|^p \max \lbrace a^{-t+p-1}, b^{-t+p-1} \rbrace.
\end{split}
\end{equation}
Combining \eqref{eq:AB}, \eqref{eq:A}, and \eqref{eq:B}, and then taking $\varepsilon$ so that $2^{p-1} \varepsilon^{p/(p-1)} = 2^{1-p}$, we arrive at
\begin{equation*}
\begin{split}
&|b-a|^{p-2}(b-a)(\tau_1^p a^{-t} - \tau_2^p b^{-t}) \\
&\geq c_1 \left| \tau_1 a^{\frac{-t+p-1}{p}} - \tau_2 b^{\frac{-t+p-1}{p}} \right|^p - c_2 |\tau_1-\tau_2|^p \left( a^{-t+p-1} + b^{-t+p-1} \right),
\end{split}
\end{equation*}
where
\begin{equation*}
c_1 = \frac{2^{1-p} p^p}{(t-p+1)^{p-1}} \quad\text{and}\quad c_2 = \left( t+2^{1-p}(p-1) \right) \left( \frac{p}{t-p+1} \right)^p + 2^{2(p-1)^2}.
\end{equation*}
Note that $c_1$ and $c_2$ are bounded when $t$ is bounded away from $p-1$.
\end{proof}

\section{Anisotropic dyadic rectangles}\label{sec:anisorect}
Let us briefly sketch the construction of anisotropic ``dyadic" rectangles. These objects can be used to prove the lower bound in $L^p$ for the sharp maximal function ${\bf M}^{\sharp}u$.

We construct anisotropic dyadic rectangles having the following properties:
\begin{enumerate}[(i)]
\item
For each integer $k \in \mathbb{Z}$, a countable collection $\lbrace Q_{k, \alpha} \rbrace_\alpha$ covers the whole space $\mathbb{R}^n$.
\item
Each $Q_k$ ($= Q_{k, \alpha}$ for some $\alpha$) has an interior of the form $\mathrm{Int}(Q_k) = M_{2^{-k}}(x)$. We call $Q_k$ an {\it anisotropic dyadic rectangle of generation $k$}.
\item
Every $Q_{k, \alpha}$ is contained in $Q_{k-1, \beta}$ for some $\beta$. We call $Q_{k-1, \beta}$ a {\it predecessor of $Q_{k, \alpha}$}.
\item
If $Q_{k, 0}, \dots, Q_{k, 2^n}$ are $2^n+1$ different anisotropic dyadic rectangles of generation $k$, then $\cap_{i=0}^{2^n} Q_{k, i} = \emptyset$.
\item
If $2^n+1$ different anisotropic dyadic rectangles $Q_{k_0}, \dots, Q_{k_{2^n}}$, $k_0 \leq \dots \leq k_{2^n}$, have a non-empty intersection, then $Q_j \subset Q_i$ for some $0 \leq i < j \leq 2^n$.
\end{enumerate}

\begin{remark} {\ }
\begin{enumerate}
\item
A predecessor may not be unique.
\item
$2^n$ different anisotropic dyadic rectangles from the same generation may have a non-empty intersection.
\end{enumerate}
\end{remark}

Such a family of anisotropic dyadic rectangles can be easily constructed. Since the sets are rectangles, it is sufficient to exemplify the construction in one dimension. Let $Q_0 = [0,1)$. Then, a countable collection $\lbrace Q_0+z \rbrace_{z \in \mathbb{Z}}$ constitutes the zeroth generation. Let $N=\lfloor 2^{s_{\max}/s_1} \rfloor$. In order to construct the first generation, we take a disjoint family of (left-closed and right-opened) $N$ intervals in $Q_0$ starting from 0 with length $2^{-s_{\max}/s_1}$ such that the following interval starts at the endpoint of the previous interval. If the right-endpoint of the last interval is 1, then these intervals constitute the first generation and there is nothing to do. Thus, we assume from now on that $2^{s_{\max}/s_1} \notin \mathbb{Z}$. In this case, we add an interval $[1-2^{-s_{\max}/s_i}, 1)$ so that
\begin{equation*}
Q_0 = \left( \bigcup_{i=0}^{N-1} Q_{1, i} \right) \cup Q_{1, N},
\end{equation*}
where
\begin{equation*}
Q_{1, i} = [i 2^{-s_{\max}/s_1}, (i+1) 2^{-s_{\max}/s_1}) \quad\text{for}~ i=0, \dots, N-1, \quad Q_{1, N} = [1-2^{-s_{\max}/s_i}, 1),
\end{equation*}
and
$N = \lfloor 2^{s_{\max}/s_1} \rfloor$. Then, the collection $\lbrace Q_{1, i}+z \rbrace_{0 \leq i \leq N, z \in \mathbb{Z}}$ forms the first generation of intervals satisfying (i)-(iv).
\begin{figure}[htb]
\includegraphics[width=0.6\textwidth]{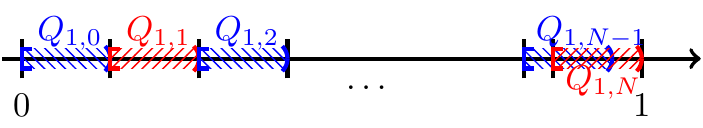}
\caption{This figure shows the construction of the family $Q_{1,i}$.}
\end{figure}

We continue to construct the intervals of generation 2 that fill in $Q_{1, i}$ for each $0 \leq i \leq N-2$. However, we have to be careful in filling in $Q_{1, N-1}$ and $Q_{1, N}$ since $Q_{1, N-1} \cap Q_{1,N} \neq \emptyset$. Suppose that we filled in $Q_{1, N-1}$ and $Q_{1, N}$ as above, i.e., 
\begin{equation*}
Q_{1, N-1} = \left( \bigcup_{i=0}^{N-1} Q_{2, i} \right) \cup Q_{2, N} \quad \text{and} \quad Q_{1, N} = \left( \bigcup_{i=0}^{N-1} \tilde{Q}_{2, i} \right) \cup \tilde{Q}_{2, N}
\end{equation*}
for some intervals $Q_{2, i}$ and $\tilde{Q}_{2, i}$, $0 \leq i \leq N$, of length $4^{-s_{\max}/s_1}$. Let $K$ be the smallest integer such that $\overline{Q_{2, K}} \cap Q_{1, N} \neq \emptyset$. Then, we have
\begin{equation*}
Q_{1, N-1} \cup Q_{1, N} = \left( \bigcup_{i=0}^K Q_{2,i} \right) \cup \left( \bigcup_{i=0}^{N-1} \tilde{Q}_{2, i} \right) \cup \tilde{Q}_{2, N}
\end{equation*}
and at most two different intervals among $\lbrace Q_{2, 0}, \dots, Q_{2, K}, \tilde{Q}_{2,0}, \dots, \tilde{Q}_{2, N} \rbrace$ can intersect. Therefore, these intervals constitute the second generation satisfying (i)-(iv).
 \begin{figure}[htb]
  \includegraphics[width=0.8\textwidth]{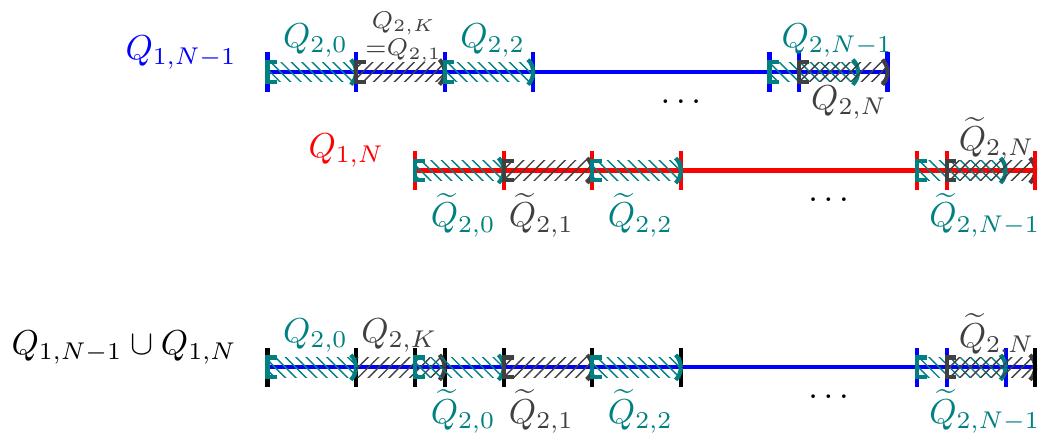}
   \caption{This figure shows the construction of generation 2.}
 \end{figure} \\
In this way, we construct intervals of generation $k$ for all $k \geq 0$.

Let us now construct intervals of generation $k < 0$. It is easy to observe that a collection $\lbrace Q_{-1}+ Nz\rbrace_{z \in \mathbb{Z}}$ of intervals of generation $-1$ satisfies (i)-(iv), where
\begin{equation*}
Q_{-1} = [0, 2^{s_{\max}/s_1}).
\end{equation*}
For the generation of $-2$, let $K$ be the largest integer such that $Q_{-1}+NK \subset Q_{-2}$, where $Q_{-2} = [0, 4^{s_{\max}/s_1})$. Then, the intervals $Q_{-2}+N(K+1)z$, $z \in \mathbb{Z}$, form the generation of $-2$, which satisfying (i)-(iv). We continue this process to construct intervals of all generations $k < 0$.

We show that the intervals constructed in this way satisfy the property (v) as well. Suppose that three different intervals $Q_{k_0}, Q_{k_1}$, and $Q_{k_2}$, $k_0 \leq k_1 \leq k_2$, have a non-empty intersection. If $k_1 = k_2$, then $Q_{k_1} \subset Q_{k_0}$ or $Q_{k_2} \subset Q_{k_0}$. If $k_1 < k_2$, then either $Q_{k_2} \subset Q_{k_1}$ or not. In the former case, we are done. In the latter case, $Q_{k_2} \subset \tilde{Q}_{k_1}$ for some $\tilde{Q}_{k_1} \neq Q_{k_1}$, which reduces to the case $k_1=k_2$.

\section{Sharp maximal function theorem} \label{sec:sharp_maximal}
In this section we prove \Cref{thm:sharp_maximal} by using the anisotropic dyadic rectangles. For $u \in L^1_{\mathrm{loc}}(\mathbb{R}^n)$, we define a dyadic maximal function ${\bf M}_du$ by
\begin{equation*}
{\bf M}_du(x) = \sup_{x \in Q} \fint_{Q} |u(y)| \,\mathrm{d}y,
\end{equation*}
where the supremum is taken over all anisotropic dyadic rectangles $Q$. Since
\begin{equation} \label{eq:MdM}
{\bf M}_d u \leq {\bf M}u,
\end{equation}
\Cref{thm:maximal} also holds for the dyadic maximal function ${\bf M}_du$. We first prove a good-lambda estimate using the dyadic maximal function. See \cite[Theorem 3.4.4]{GrafakosMF}.

\begin{theorem} \label{thm:good_lambda}
Let $s_1, \dots, s_n \in [s_0, 1)$ be given for some $s_0 \in (0,1)$. There exists a constant $C = C(n, s_0) > 0$ such that 
\begin{equation*}
|\lbrace x \in \mathbb{R}^n: {\bf M}_d u(x) > 2\lambda, {\bf M}^{\sharp}u(x) \leq \gamma \lambda \rbrace| \leq C\gamma |\lbrace x \in \mathbb{R}^n: {\bf M}_d u(x) > \lambda \rbrace|
\end{equation*}
for all $\gamma>0$, $\lambda > 0$, and $u \in L^1_{\mathrm{loc}}(\mathbb{R}^n)$.
\end{theorem}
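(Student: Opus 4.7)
The plan is to transplant the classical Fefferman--Stein good-$\lambda$ argument (see e.g.\ Grafakos, \emph{Classical Fourier Analysis}, Theorem 3.4.4) into the anisotropic dyadic framework of \Cref{sec:anisorect}. Write $\Omega := \{{\bf M}_d u > \lambda\}$ and $E := \{{\bf M}_d u > 2\lambda\} \cap \{{\bf M}^\sharp u \leq \gamma\lambda\}$. I may assume $|\Omega| < \infty$, otherwise the right-hand side is infinite and there is nothing to prove. The aim is to produce a countable cover of $E$ by ``maximal'' anisotropic dyadic rectangles, estimate the contribution of each piece using ${\bf M}^\sharp u$ at one good point of that piece, and sum using the bounded-overlap property~(v).

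First I would introduce the family $\mathcal{M}$ of \emph{absolutely maximal} anisotropic dyadic rectangles $Q$ satisfying $\fint_Q |u|\,\d x > \lambda$, i.e.\ rectangles for which no strictly larger dyadic rectangle has average $>\lambda$. The finiteness of $|\Omega|$ forces sizes of supra-$\lambda$ rectangles to be bounded above, and by property~(iv) only finitely many dyadic rectangles of each generation pass through a given point; hence an absolutely maximal supra-$\lambda$ rectangle through each $x \in \Omega$ exists, so $\Omega = \bigcup_{Q \in \mathcal{M}} Q$. Members of $\mathcal{M}$ are pairwise incomparable, so property~(v) yields
\[
 \#\{Q \in \mathcal{M} : x \in Q\} \leq 2^n \quad\text{for every } x\in\R^n, \qquad \text{so} \qquad \sum_{Q\in\mathcal{M}} |Q| \leq 2^n |\Omega|.
\]
For each $Q\in\mathcal{M}$, any predecessor $\hat Q$ satisfies $(u)_{\hat Q} \leq \lambda$ and $|\hat Q|/|Q| = 2^{n s_{\max}/\bar s} \leq 2^{n/s_0}$.

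The next ingredient is an \emph{absorption lemma}: every dyadic rectangle $R$ with $\fint_R|u| > \lambda$ is contained in some $Q \in \mathcal{M}$. Indeed, the dyadic rectangles $\supseteq R$ with average $>\lambda$ form a finite family (bounded in size by $|\Omega|$, and at most $2^n$ at each generation by~(iv)), and an element of maximal volume must be absolutely maximal. Setting
\[
 E^Q := \Bigl\{x \in E \cap Q : \exists \text{ dyadic } R \subseteq Q,\ x \in R,\ \fint_R|u| > 2\lambda \Bigr\},
\]
the absorption lemma, applied to a dyadic rectangle realizing ${\bf M}_d u(x) > 2\lambda$, gives $E = \bigcup_{Q\in\mathcal{M}} E^Q$. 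To bound $|E^Q|$ for a fixed $Q$, assume $E^Q \ne \emptyset$, pick $y_0 \in E^Q \subseteq \hat Q$, and set $c := (u)_{\hat Q}$. Since $|c| \leq \lambda$, the condition $\fint_R|u| > 2\lambda$ forces $\fint_R|u-c| > \lambda$, so the weak-type $(1,1)$ bound of the dyadic maximal function restricted to sub-rectangles of $Q$ (whose constant remains controlled by $n$ and $s_0$ through the same overlap argument as above) yields $|E^Q| \leq C(n,s_0)\lambda^{-1} \int_{\hat Q} |u-c|$. Enclosing $\hat Q$ in a centered rectangle $M_\rho(y_0)$ of comparable measure---possible since $y_0 \in \hat Q$ and $s_{\max}/s_i \geq 1$, so $\rho$ equal to twice the ``dyadic radius'' of $\hat Q$ suffices---and combining $\fint_{\hat Q} |u - (u)_{\hat Q}| \leq 2\fint_{\hat Q}|u-(u)_{M_\rho(y_0)}| \leq 2(|M_\rho(y_0)|/|\hat Q|){\bf M}^\sharp u(y_0) \leq C(n,s_0)\gamma\lambda$ gives $|E^Q| \leq C(n,s_0)\gamma|Q|$. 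Summing over $\mathcal{M}$ and using the overlap bound finishes the proof.

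The main obstacle is exactly the non-nested nature of the anisotropic dyadic system: in the classical cube case two dyadic cubes sharing a point are automatically nested, which is what allows one to argue that the $2\lambda$-realizing sub-rectangle inside a Calder\'on--Zygmund cube is necessarily a sub-cube of it. Here two dyadic rectangles through a common point can be incomparable. Absolute maximality in the definition of $\mathcal{M}$, together with property~(v), are designed precisely to repair this: the absorption lemma lets one \emph{choose} the enclosing $Q\in\mathcal{M}$ after the realizing rectangle has been produced, and property~(v) bounds the price paid for the loss of disjointness by the purely combinatorial factor $2^n$.
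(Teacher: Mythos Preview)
Your proof is correct and follows essentially the same route as the paper's: a Calder\'on--Zygmund-type decomposition of $\{{\bf M}_d u>\lambda\}$ into maximal anisotropic dyadic rectangles, the $2^n$-overlap bound coming from property~(v), and the per-rectangle estimate via the weak-$(1,1)$ inequality for ${\bf M}_d$ combined with control of the predecessor's oscillation by ${\bf M}^\sharp u$ at a point where ${\bf M}^\sharp u\le\gamma\lambda$. The paper simply refers to the second paragraph of Grafakos' proof and records the one modified inequality involving $|\hat Q|/|Q|\le 2^{n/s_0}$.

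The one place where you are more careful than the paper is the \emph{absorption lemma}: in the classical dyadic setting, a rectangle $R\ni x$ realizing ${\bf M}_d u(x)>2\lambda$ is automatically nested with the Calder\'on--Zygmund rectangle $Q_j\ni x$, hence $R\subset Q_j$ by maximality. In the anisotropic grid this nesting can fail, and you correctly resolve it by absorbing $R$ into \emph{some} $Q\in\mathcal M$ (not necessarily the one originally containing $x$) and working with the sets $E^Q$ rather than $E\cap Q_j$. This is exactly the subtlety the paper's dyadic construction was built to handle, and your argument makes it explicit; otherwise the two proofs are the same.
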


\begin{proof}
Let $\Omega_\lambda = \lbrace x \in \mathbb{R}^n: M_d(f)(x) > \lambda \rbrace$. We may assume that $|\Omega_\lambda|<+\infty$ since otherwise there is nothing to prove. For each $x \in \Omega_\lambda$, we find a maximal anisotropic dyadic rectangle $Q^x$ such that
\begin{equation} \label{eq:Qx}
x \in Q^x \subset \Omega_\lambda \quad\text{and}\quad \fint_{Q^x} |f| > \lambda.
\end{equation}
There are at most $2^n$ different maximal anisotropic dyadic rectangles of the same generation satisfying \eqref{eq:Qx}, but we can still choose anyone of them. Let $Q_j$ be the collection of all such rectangles $Q^x$ for all $x \in \Omega_{\lambda}$. Then, we have $\Omega_\lambda = \cup_j Q_j$. Note that different rectangles $Q_j$ may have an intersection, but the intersection is contained in at most $2^n$ different maximal rectangles of the same generation. This is a consequence of the properties (iv) and (v) of anisotropic dyadic rectangles. Hence,
\begin{equation*}
\sum_{j} |Q_j| \leq 2^n |\Omega_\lambda|.
\end{equation*}
Therefore, the desired result follows once we have
\begin{equation} \label{eq:Q_j}
|\lbrace x \in Q_j: {\bf M}_d u(x) > 2\lambda, {\bf M}^{\sharp} u(x) \leq \gamma \lambda \rbrace| \leq C\gamma |Q_j|
\end{equation}
for some $C = C(n, s_0)$. Indeed, one can prove \eqref{eq:Q_j} by following the second paragraph of the proof of \cite[Theorem 3.4.4]{GrafakosMF}, using \Cref{thm:maximal} for ${\bf M}_d$, and replacing \cite[Equation (3.4.8)]{GrafakosMF} by
\begin{equation*}
\frac{1}{\lambda} \int_{Q_j} |u(y) - (u)_{Q_j'}| \,\mathrm{d}y \leq \frac{2^{ns_{\max}/\bar{s}}}{\lambda} \frac{|Q_j|}{|Q_j'|} \int_{Q_j'} |u(y) - (u)_{Q_j'}| \,\mathrm{d}y \leq \frac{2^{n/s_0}}{\lambda} |Q_j| {\bf M}^{\sharp}u(\xi_j)
\end{equation*}
for all $\xi_j \in Q_j$, where $Q_j'$ is anyone of predecessors of $Q_j$.
\end{proof}

\begin{theorem} \label{thm:Md}
Let $s_1, \dots, s_n \in [s_0, 1)$ be given for some $s_0 \in (0,1)$, and let $0<p_0 \leq p<\infty$. Then, there is a constant $C = C(n, p, s_0) > 0$ such that for all functions $u \in L^1_{\mathrm{loc}}(\mathbb{R}^n)$ with ${\bf M}_du \in L^{p_0}(\mathbb{R}^n)$ we have
\begin{equation*}
\|{\bf M}_du\|_{L^p(\mathbb{R}^n)} \leq C\|{\bf M}^{\sharp}u \|_{L^p(\mathbb{R}^n)}.
\end{equation*}
\end{theorem}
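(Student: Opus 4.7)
The plan is to adapt the classical Fefferman--Stein good-lambda argument to the anisotropic dyadic setup. The layer cake representation reduces matters to controlling the distribution function of ${\bf M}_d u$, and Theorem~\ref{thm:good_lambda} provides exactly this control in terms of ${\bf M}^\sharp u$. The hypothesis ${\bf M}_d u \in L^{p_0}$ will serve the purely technical role of guaranteeing the a priori finiteness of a truncated quantity that must be absorbed into the left-hand side.

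For each $N > 0$, I would first set
\begin{equation*}
I_N := p \int_0^N \lambda^{p-1} |\{{\bf M}_d u > \lambda\}|\,\mathrm{d}\lambda = \int_{\R^n} \min({\bf M}_d u(x), N)^p\,\mathrm{d}x.
\end{equation*}
Because $\min({\bf M}_d u, N)^{p} \le N^{p-p_0} ({\bf M}_d u)^{p_0}$ pointwise and ${\bf M}_d u\in L^{p_0}(\R^n)$, one obtains $I_N \le N^{p-p_0}\|{\bf M}_d u\|_{L^{p_0}(\R^n)}^{p_0} < \infty$. Using the change of variable $\lambda\mapsto 2\lambda$ together with the set inclusion
\begin{equation*}
\{{\bf M}_d u > 2\lambda\} \subset \{{\bf M}_d u > 2\lambda,\, {\bf M}^\sharp u \le \gamma\lambda\} \cup \{{\bf M}^\sharp u > \gamma\lambda\},
\end{equation*}
I would split $I_N$ into two pieces, apply Theorem~\ref{thm:good_lambda} (with constant $C_0=C_0(n,s_0)$) to the first and the layer cake formula (extending the range of integration from $[0,N/2]$ to $[0,\infty)$) to the second, arriving at
\begin{equation*}
I_N \le 2^p C_0 \gamma\, I_N + 2^p \gamma^{-p} \|{\bf M}^\sharp u\|_{L^p(\R^n)}^p
\end{equation*}
for every $\gamma>0$.

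Choosing $\gamma=\gamma(n,p,s_0)>0$ so that $2^p C_0\gamma \le 1/2$ and using $I_N<\infty$ to legally absorb the first term on the right-hand side, I would conclude $I_N \le 2^{p+1}\gamma^{-p}\|{\bf M}^\sharp u\|_{L^p(\R^n)}^p$ uniformly in $N$. Finally, sending $N\to\infty$ and invoking monotone convergence yields
\begin{equation*}
\|{\bf M}_d u\|_{L^p(\R^n)}^p \le C\|{\bf M}^\sharp u\|_{L^p(\R^n)}^p
\end{equation*}
with $C=C(n,p,s_0)$.

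The only delicate point is the absorption step: without an a priori finite surrogate for $\|{\bf M}_d u\|_{L^p(\R^n)}^p$, cancelling the first term on the right against the left would be illegal. The role of the assumption ${\bf M}_d u \in L^{p_0}$ is precisely to provide such a surrogate through the truncation $I_N$, which then passes to the limit via monotone convergence. Beyond this, the argument is a direct anisotropic analogue of the classical cube-based proof (see e.g.\ \cite[Section 3.4]{GrafakosMF}), made possible by Theorem~\ref{thm:good_lambda}.
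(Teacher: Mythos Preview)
Your proposal is correct and follows essentially the same approach as the paper, which simply refers to the classical argument in \cite[Theorem~3.4.5]{GrafakosMF} with \Cref{thm:good_lambda} substituted for the Euclidean good-$\lambda$ estimate. Your write-up in fact spells out precisely that standard Fefferman--Stein computation (truncated layer cake, good-$\lambda$ splitting, absorption, monotone convergence), so there is nothing to add.
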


\Cref{thm:Md} can be proved in the same way as in the proof of \cite[Theorem 3.4.5]{GrafakosMF} except that we use \Cref{thm:good_lambda} instead of \cite[Theorem 3.4.4]{GrafakosMF}. Finally, we combine the inequality
\begin{equation*}
\|u\|_{L^p(\mathbb{R}^n)} \leq \|{\bf M}_du\|_{L^p(\mathbb{R}^n)},
\end{equation*}
which comes from the Lebesgue differentiation theorem, and \Cref{thm:Md} to conclude \Cref{thm:sharp_maximal}. See \cite[Corollary 3.4.6]{GrafakosMF}.

\section{Pointwise convergence of the fractional orthotropic \texorpdfstring{$p$}{p}-Laplacian}

This section provides the proof of pointwise convergence of the fractional orthotropic $p$-Laplacian as $s \nearrow 1$.

\begin{proposition} \label{prop:convergence}
Let $u \in C^2(\mathbb{R}^n) \cap L^{\infty}(\mathbb{R}^n)$ and $x \in \mathbb{R}^n$ be such that $\partial_iu(x) \neq 0$ for all $i = 1, \dots, n$. Let $s_i = s$ for all $i=1, \dots, n$. Let $L$ be the operator in \eqref{def:nonlocaloperator} with $\mu = \ma$ and $A^{p}_{\mathrm{loc}}$ be as in \eqref{eq:Aploc}. Then, $Lu(x) \to A^{p}_{\mathrm{loc}}u(x)$ as $s \nearrow 1$ up to a constant.
\end{proposition}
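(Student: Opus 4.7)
The plan is to reduce the assertion to the one-dimensional pointwise convergence result for the fractional $p$-Laplacian already cited in the introduction (\cite[Theorem 2.8]{BucSqu21}, \cite[Lemma 5.1]{dTGCV20}). By the explicit product-delta structure of $\ma$, with $s_1=\cdots=s_n=s$, I would rewrite
\[ Lu(x) = \sum_{k=1}^n s(1-s)\,\mathrm{PV}\!\!\int_{\R} |u(x+he_k)-u(x)|^{p-2}(u(x+he_k)-u(x))\,|h|^{-1-sp}\,\d h, \]
so that $Lu(x)$ is exactly the sum over $k$ of the one-dimensional fractional $p$-Laplacian (up to normalization) applied to the slice function $u_k(t)=u(x+te_k)$ at $t=0$. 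Each $u_k$ lies in $C^2(\R)\cap L^{\infty}(\R)$ and, by hypothesis, satisfies $u_k'(0)=\partial_k u(x)\neq 0$.

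Next I would apply the cited one-dimensional convergence result to each $u_k$: there is an explicit constant $c_p>0$, independent of $k$, with
\[ s(1-s)\,\mathrm{PV}\!\!\int_{\R}|u_k(h)-u_k(0)|^{p-2}(u_k(h)-u_k(0))\,|h|^{-1-sp}\,\d h \xrightarrow[s\nearrow 1]{} c_p\bigl(|u_k'|^{p-2}u_k'\bigr)'(0). \]
Since $\bigl(|u_k'|^{p-2}u_k'\bigr)'(0)=\partial_k\bigl(|\partial_k u|^{p-2}\partial_k u\bigr)(x)$, summing from $k=1$ to $n$ gives
\[ Lu(x) \xrightarrow{s\nearrow 1} c_p \sum_{k=1}^n \partial_k\bigl(|\partial_k u|^{p-2}\partial_k u\bigr)(x) = c_p\, A^p_{\mathrm{loc}}u(x), \]
which is exactly the conclusion, up to the constant $c_p$.

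The main obstacle is the justification of the pointwise convergence of each one-dimensional PV integral, especially in the singular regime $1<p<2$, where the integrand is only mildly singular at $h=0$. This is precisely where the nonvanishing-derivative assumption enters: writing $u_k(h)-u_k(0)=u_k'(0)h+O(h^2)$ near $0$, one gets
\[ |u_k(h)-u_k(0)|^{p-2}(u_k(h)-u_k(0)) = |u_k'(0)|^{p-2}u_k'(0)\,h|h|^{p-2} + \text{(smoother error)}, \]
so the principal value near $0$ is well defined and can be computed by a Taylor expansion, while the piece away from $0$ is controlled by the $L^\infty$ bound on $u$ together with the $|h|^{-1-sp}$ decay uniformly in $s\in(s_0,1)$. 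Dominated convergence on the outer piece and the Taylor expansion on the inner piece then yield the desired one-dimensional limit, and summing over $k$ completes the proof. I would not track the precise value of $c_p$, since the statement only requires convergence up to a constant.
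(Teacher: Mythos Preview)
Your proposal is correct and follows essentially the same route as the paper: decompose $Lu(x)$ via the product-delta structure of $\ma$ into a sum of one-dimensional fractional $p$-Laplacians applied to the slice functions, then invoke \cite[Theorem 2.8]{BucSqu21} for each summand and add up. The paper's proof is in fact shorter than yours, as it simply cites the one-dimensional result without the additional Taylor-expansion discussion you provide for the $1<p<2$ case.
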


\begin{proof}
Let us fix a point $x \in \mathbb{R}^n$ with $\partial_iu(x) \neq 0$. For each $i = 1, \dots, n$, let us define $u_i : \mathbb{R} \to \mathbb{R}$ by $u_i(x_i) = u(x_1, \dots, x_i, \dots, x_n)$ as a function of one variable. Then $u_i \in C^2(\mathbb{R}) \cap L^{\infty}(\mathbb{R})$ and $u_i'(x_i) \neq 0$. We write
\begin{equation*}
\begin{split}
Lu(x) = \sum_{i=1}^{n} s(1-s) \int_{\mathbb{R}} \frac{|u_i(y_i) - u(x_i)|^{p-2} (u_i(y_i) - u_i(x_i))}{|x_i-y_i|^{1+sp}} \,\d y_i = - \sum_{i=1}^n (-\partial^2)^{s}_{p} u_i(x_i),
\end{split}
\end{equation*}
which is the sum of one-dimensional fractional $p$-Laplacians. By \cite[Theorem 2.8]{BucSqu21}, we have
\begin{equation*}
-(-\partial^2)^{s}_{p} u_i(x_i) \to \frac{\d}{\d x_i} \left( \left| \frac{\d u_i}{\d x_i}(x_i) \right|^{p-2} \frac{\d u_i}{\d x_i}(x_i) \right)
\end{equation*}
as $s \nearrow 1$, for each $i=1, \dots, n$, up to a constant depending on $p$ only. Consequently, by summing up $Lu(x) \to A^{p}_{\mathrm{loc}}u(x)$ as $s \nearrow 1$.
\end{proof}

\end{appendix}

\subsection*{Conflict of Interest} 
Authors state no conflict of interest

%\bibliographystyle{abbrv}
%\bibliography{references}

\end{document}